\newcommand{\N}{\mathbb{N}}
\newcommand{\R}{\mathbb{R}}
\newcommand{\C}{\mathbb{C}}
\newcommand{\dx}{{\rm d}x }
\newcommand{\supp}{\operatorname{supp}\,}
\newcommand{\singsupp}{\operatorname{sing\,supp}\,}
\newtheorem{theorem}{Theorem}[section]
\newtheorem{proposition}[theorem]{Proposition}
\newtheorem{lemma}[theorem]{Lemma}
\newtheorem{corollary}[theorem]{Corollary}
\theoremstyle{definition}
\newtheorem{definition}[theorem]{Definition}
\theoremstyle{remark}
\newtheorem{remark}[theorem]{Remark}
\numberwithin{equation}{section}
\DeclareRobustCommand\widecheck[1]{{\mathpalette\@widecheck{#1}}}
\def\@widecheck#1#2{%
    \setbox\z@\hbox{\m@th$#1#2$}%
    \setbox\tw@\hbox{\m@th$#1%
       \widehat{%
          \vrule\@width\z@\@height\ht\z@
          \vrule\@height\z@\@width\wd\z@}$}%
    \dp\tw@-\ht\z@
    \@tempdima\ht\z@ \advance\@tempdima2\ht\tw@ \divide\@tempdima\thr@@
    \setbox\tw@\hbox{%
       \raise\@tempdima\hbox{\scalebox{1}[-1]{\lower\@tempdima\box
\tw@}}}%
    {\ooalign{\box\tw@ \cr \box\z@}}}
\tikzset{join/.code=\tikzset{after node path={%
\ifx\tikzchainprevious\pgfutil@empty\else(\tikzchainprevious)%
edge[every join]#1(\tikzchaincurrent)\fi}}}
\tikzset{>=stealth',every on chain/.append style={join},
         every join/.style={->}}
\tikzstyle{labeled}=[execute at begin node=$\scriptstyle,
\begin{document}
	
\title[Quantitative Runge type approximation theorems]{Quantitative Runge type approximation theorems for zero solutions of certain partial differential operators}
	
\author[A. Debrouwere]{A. Debrouwere$^1$}
\address{$^1$Department of Mathematics and Data Science, Vrije Universiteit Brussel, Pleinlaan 2, 1050 Brussels, Belgium}
\email{andreas.debrouwere@vub.be}

\author[T.\ Kalmes]{T.\ Kalmes$^2$}
\address{$^2$Faculty of Mathematics, Chemnitz University of Technology, 09107 Chemnitz, Germany}
\email{thomas.kalmes@math.tu-chemnitz.de}

\begin{abstract}
	We prove  quantitative Runge type approximation results for spaces of smooth zero solutions of several classes of linear partial differential operators with constant coefficients. Among others, we establish such  results for arbitrary operators on convex sets, elliptic operators, parabolic operators, and the wave operator in one spatial variable. Our methods are inspired by the study of linear topological invariants for kernels of partial differential operators.  As a part of our work, we also show a  qualitative Runge type approximation theorem for subspace elliptic operators, which seems to be new and of independent interest. \\
	
	\noindent Keywords: Quantitative Runge type approximation theorems; Quantitative Lax-Malgrange theorem; Partial differential operators; Linear topological invariants for kernels of differential operators\\
	
	\noindent MSC 2020: 35A35, 35E20, 46A63
	
\end{abstract}

\maketitle

\section{Introduction and statement of the main results}

A well-known consequence of Runge's classical theorem on  approximation by rational functions is that for open subsets $Y\subseteq X$ of the complex plane $\mathbb{C}$, every holomorphic function on $Y$ can be approximated uniformly on compact subsets by holomorphic functions on $X$ if and only if  $X$ does not contain a compact connected component of $\mathbb{C}\backslash Y$. This result has been generalized independently by Lax \cite{Lax} and Malgrange \cite{Malgrange} to kernels of elliptic constant coefficient differential operators. Browder \cite{Browder1962b, Browder1962}  thoroughly investigated Runge type approximation theorems for elliptic differential operators with variable coefficients in various function and distribution spaces. 

For non-elliptic differential operators much less is known.  Given any non-zero constant coefficient partial differential operator $P(D)$, Malgrange \cite[Chapitre 1.2, Th\'eor\`eme 2]{Malgrange} (see also  \cite[Theorem 10.5.1]{HoermanderPDO2}) showed that the linear span of the exponential solutions of $P(D)$ is dense in the space of smooth zero solutions of $P(D)$ on a  convex set. H\"ormander \cite[Theorem 10.5.2]{HoermanderPDO2} and Tr\`eves \cite[Theorem 26.1]{Treves1967-2} proved implicit approximation results for general constant coefficient partial differential operators that are applicable to arbitrary open sets $Y, X$ and to $P$-convex open sets $Y,X$, respectively. However, geometrical conditions on  $Y,X$ ($Y$ non-convex) ensuring the validity of a Runge type approximation theorem are only known for special types of operators. In \cite{Diaz1980,GT,Jones1975} approximation theorems for the heat operator have been shown. The second author \cite{Kalmes21} recently extended these results to constant coefficient differential operators  with a single characteristic direction, including the Schr\"odinger operator and parabolic operators like the heat operator. Runge type approximation theorems  for variable coefficient  parabolic differential operators of second order have been lately obtained in \cite{EnGaPe19}. 

In the past few years, there has been a considerable interest in \emph{quantitative} approximation results. Roughly speaking, the term quantitative  here means that given an approximation error  $\varepsilon>0$  and a solution $f$ to a differential equation on a certain set, the size of the approximant $h_\varepsilon$, which is a solution to the same differential equation on a larger set, can be estimated in terms of $\varepsilon$ and the size of $f$ (where the size of a function is measured in terms of a scale of seminorms). The goal is thus to determine the cost of approximation in the context of Runge type theorems.  In their seminal work \cite{RuSa20}, R\"uland and Salo proved quantitative  Runge type approximation theorems for  fractional Schr\"odinger operators. Shortly after this work, quantitative  approximation results were shown for various other types of operators, see \cite{RuSa19} for  elliptic variable coefficient operators of second order, \cite{RuSa20-1} for the fractional heat and wave operator, and \cite{EnPe21} for the Schr\"odinger operator. We remark that quantitative approximation results may be used to obtain stability results for various ill-posed inverse problems \cite{Ruland, RuSa19, RuSa20}. 

Inspired by the above results, we establish in the present paper quantitative Runge type approximation results for  spaces of zero solutions of several classes of constant coefficient partial differential operators. Most notably, we consider arbitrary operators on convex sets, elliptic operators, parabolic operators, and the wave operator in one spatial variable. 

Let $P\in\C[\xi_1,\ldots,\xi_d]$ be a polynomial and consider the corresponding constant coefficient differential operator $P(D)=P(-i\frac{\partial}{\partial x_1},\ldots,-i\frac{\partial}{\partial x_d})$. Let $X \subseteq \R^d$ be open. We write $\mathcal{E}(X)$ for the space of smooth functions on $X$ endowed with its natural Fr\'echet space topology, i.e., the one induced by the family of seminorms
\begin{equation}
\label{norms-E}
\|f\|_{K,r}=\sup_{x\in K,|\alpha|\leq r}|\partial^\alpha f(x)|, \qquad K \subseteq X \mbox{ compact}, r \in \N_0.
\end{equation}
We set 
$$\mathscr{E}_P(X) = \{f\in\mathscr{E}(X)\,;\, P(D)f=0\}$$ 
and endow it with the subspace topology from $\mathscr{E}(X)$. A pair of open sets $Y\subseteq X \subseteq \R^d$ satisfies the (qualitative) Runge  approximation property with respect to the operator $P(D)$  if the restriction map
$$r^P_{\mathscr{E}}:\mathscr{E}_P(X)\rightarrow\mathscr{E}_P(Y), f\mapsto f_{|Y}$$
has dense range. Our goal is to give quantified versions of this property in terms of the seminorms defined in \eqref{norms-E}. If $P(D)$ is hypoelliptic, the topology of the space $\mathscr{E}_P(X)$ is generated by the sup-seminorms $ \|\, \cdot \, \|_{K} =  \|\, \cdot \, \|_{K,0}$, $K \subseteq X$ compact. Hence, in this case, it seems more  natural to look for quantitative approximation results with respect to the seminorms $ \|\, \cdot \, \|_{K}$. 

We now state a sample of our main results.  We start with a quantitative approximation result for arbitrary operators on convex sets.

\begin{theorem}\label{theo: quantitative convex}
Let $P\in \C[\xi_1,\ldots,\xi_d] \backslash \{0\}$. Let $K \subseteq \R^d$ be compact and convex. Let $Y\subseteq\R^d$ be open with $K \subseteq Y$. Then, for all $L \subseteq Y$ compact with $K \subseteq \operatorname{int} L$,  $M \subseteq \R^d$ compact, and $r_1,r_2 \in \N_0$ there are $s, C>0$ such that 
		\begin{gather*} 
		\forall f\in\mathscr{E}_P(Y), \varepsilon\in (0,1) \,\exists h_\varepsilon \in\mathscr{E}_P(\R^d) \, :\\ \nonumber
		\|f-h_\varepsilon\|_{K,r_1}\leq \varepsilon\|f\|_{L,r_1+1} \quad \mbox{ and } \quad \|h_\varepsilon\|_{M,r_2}\leq\frac{C}{\varepsilon^s}\|f\|_{L}.
	\end{gather*}
If $P(D)$ is hypoelliptic, we have that for all $L \subseteq Y$ compact with $K \subseteq \operatorname{int} L$ and $M \subseteq \R^d$ compact there are $s, C>0$ such that 
\begin{gather*} 
	\forall f\in\mathscr{E}_P(Y), \varepsilon\in (0,1) \,\exists h_\varepsilon \in\mathscr{E}_P(\R^d) \, :\\ \nonumber
	\|f-h_\varepsilon\|_{K}\leq \varepsilon\|f\|_{L} \quad \mbox{ and } \quad \|h_\varepsilon\|_{M}\leq\frac{C}{\varepsilon^s}\|f\|_{L}.
\end{gather*}
\end{theorem}

Next, we consider elliptic operators. An open  set $U \subseteq \R^d$ is said to have  \emph{continuous boundary} if for each $\xi \in \partial U$ there exists a homeomorphism $h$ from the closed unit ball $\overline{B}(0,1)$ in $\R^d$ onto a compact neighborhood $K$ of $\xi$ such that $h(0) = \xi$  
 and  $\overline{U} \cap K= h\left(\{x\in \overline{B}(0,1)\, ; \,x_d\leq 0\}\right)$. Obviously, every Lipschitz domain has continuous boundary. The next result may be considered as a quantified version of the Lax-Malgrange theorem.
 
 \begin{theorem}\label{theo: quantitative Lax-Malgrange}
	Let $P\in \C[\xi_1,\ldots,\xi_d]$ be elliptic and let $X\subseteq\R^d$ be open. Let $K$ be a compact subset of $X$ that is the closure of an open set with continuous boundary. Suppose that $X$ does not contain a bounded connected component of $\R^d\backslash K$. Let $Y \subseteq \R^d$ be open with $K\subseteq Y\subseteq X$.
	Then, for all $L \subseteq Y$ compact with $K \subseteq \operatorname{int} L$ and $M \subseteq X$ compact there are $s, C>0$ such that 
	\begin{gather*} 
	\forall f\in\mathscr{E}_P(Y), \varepsilon\in (0,1) \,\exists h_\varepsilon \in\mathscr{E}_P(X) \, :\\ \nonumber
	\|f-h_\varepsilon\|_{K}\leq \varepsilon\|f\|_{L} \quad \mbox{ and } \quad \|h_\varepsilon\|_{M}\leq\frac{C}{\varepsilon^s}\|f\|_{L}.
\end{gather*}
\end{theorem}
\noindent Theorem  \ref{theo: quantitative Lax-Malgrange} for the Cauchy-Riemann operator is due to Petzsche \cite[Theorem 4.2 (a)]{Petzsche1980} (he did not need to assume that $U$ has continuous boundary). To the best of our knowledge, Petzsche's result seems to be the first quantitative Runge  type approximation result. Furthermore, Theorem  \ref{theo: quantitative Lax-Malgrange} complements \cite[Theorem 3]{RuSa19}, where a similar result is proven  for elliptic variable coefficient  operators of second order with respect to  Sobolev type norms.

In \cite[Corollary 4]{Kalmes21} an approximation result for certain $k$-parabolic  operators  on tubular domains is shown. We now give a quantified version of it.  
We denote the elements of $ \R^d = \R^{n+1}$ by $(t,x)$ with $t\in\R$, $x\in\R^n$.

\begin{theorem}\label{theo: quantitative parabolic}
	Let $Q\in\C[\xi_1,\ldots,\xi_n]$ be an elliptic polynomial of degree $m$ with principal part $Q_m$, let $r\in\N$ with $r<m$, and let $\alpha\in\C\backslash\{0\}$.
	Suppose that $\{ Q_m(x) \, ; \, x \in \R^n\backslash\{0\} \} \cap \{\alpha t^r\, ; \,t\in\R\} = \emptyset$. Consider the (hypoelliptic) differential operator
	$$P(D) = \alpha D^r_t -Q(D_x).$$
	Let $G\subseteq\R^n$ be open. Let $K$ be a compact subset of $G$ that is the closure of an open set with continuous boundary.
	Suppose  that $G$ does not contain a bounded connected component of $\R^n\backslash K$, and let $[t_1,t_2] \subseteq \R$ be an interval. Let $ H \subseteq \R^n$ be open with $K \subseteq H \subseteq G$ and let $J \subseteq \R$ be an open interval with $[t_1,t_2] \subseteq J$. Then, for all  $L \subseteq H$ compact with $K \subseteq \operatorname{int} L$, $\delta > 0$ with $[t_1-\delta,t_2+\delta]\subseteq J$,   and $M \subseteq \R \times G$ compact there are $s, C>0$ such that
	\begin{gather*} 
		\forall f\in\mathscr{E}_P(J \times H), \varepsilon\in (0,1) \,\exists h_\varepsilon \in\mathscr{E}_P(\R \times G) \, :\\ 
		\|f-h_\varepsilon\|_{[t_1,t_2] \times K}\leq \varepsilon\|f\|_{[t_1-\delta,t_2+\delta]\times L} \quad \mbox{ and } \quad \|h_\varepsilon\|_{M}\leq\frac{C}{\varepsilon^s}\|f\|_{[t_1-\delta,t_2+\delta]\times L}.
	\end{gather*}
\end{theorem}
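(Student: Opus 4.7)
The proof is designed to parallel that of Theorem~\ref{theo: quantitative Lax-Malgrange}, substituting the parabolic Runge theorem of \cite[Corollary 4]{Kalmes21} for the classical Lax--Malgrange theorem. The strategy has two ingredients: (i) a qualitative approximation statement, which under the present hypotheses is exactly \cite[Corollary 4]{Kalmes21} and gives that the restriction map $r^P_{\mathscr{E}}\colon\mathscr{E}_P(\R\times G)\to\mathscr{E}_P(J\times H)$ has dense range; and (ii) a linear topological invariant of $(\Omega)$-type for the source space $\mathscr{E}_P(\R\times G)$. Because $P$ is hypoelliptic, both kernel spaces are Fr\'echet--Schwartz and their topology is induced by sup-seminorms on compacta, which is why the estimates of the theorem are formulated in terms of $\|\cdot\|_K$ rather than the higher-order $\|\cdot\|_{K,r}$. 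An abstract functional-analytic principle, the same one driving Theorems~\ref{theo: quantitative convex} and~\ref{theo: quantitative Lax-Malgrange}, converts (i) and (ii) into the quantitative bound stated in the theorem.

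Concretely, the abstract principle reduces the theorem to checking a three-sets interpolation inequality for the source space: for each compact $M\subseteq\R\times G$ and each pair of nested compact sets $[t_1,t_2]\times K\subseteq\operatorname{int}([t_1-\delta,t_2+\delta]\times L)\subseteq J\times H$, one needs a compact $M'\subseteq\R\times G$, a $\theta\in(0,1)$ and a $C>0$ such that
\begin{equation*}
\|g\|_{M}\leq C\,\|g\|_{[t_1,t_2]\times K}^{1-\theta}\,\|g\|_{M'}^{\theta}\qquad\text{for all } g\in\mathscr{E}_P(\R\times G).
\end{equation*}
In the Fr\'echet--Schwartz setting this is the quantitative form of Vogt's property $(\Omega)$, and a standard rescaling argument converts it into the $\varepsilon$/$\varepsilon^{-s}$ trade-off. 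Dense range of $r^P_{\mathscr{E}}$ is then invoked to pass from a globally defined $g$ to an approximant $h_\varepsilon$ of a given $f\in\mathscr{E}_P(J\times H)$.

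The main obstacle is verifying the interpolation inequality, because $G$ need not be convex and is only constrained by the topological condition that no bounded connected component of $\R^n\setminus K$ lies inside $G$. The plan is to decouple the spatial and temporal variables. Since $Q$ is elliptic of order $m>r$, the defining relation $\alpha D_t^r g=Q(D_x)g$ forces every $g\in\mathscr{E}_P(\R\times G)$ to be real-analytic in $x$ with uniform control in $t$, and the spatial three-sets interpolation over nested compacta in $G$ follows from the same Cauchy-type arguments that proved Theorem~\ref{theo: quantitative Lax-Malgrange}; the non-convexity of $G$ is absorbed by the topological hypothesis exactly as there. The temporal interpolation is obtained by exploiting smoothness of $g$ in $t$ together with the iterated use of $\alpha D_t^r g=Q(D_x)g$ to convert time derivatives into spatial ones, so that the spatial estimate feeds back into a time-direction interpolation. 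Combining the spatial and temporal interpolations on the tube $\R\times G$ yields the required three-sets inequality, and substituting this into the abstract principle completes the proof.
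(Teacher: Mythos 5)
Your high-level architecture --- a qualitative Runge theorem for the pair $(J\times H,\ \R\times G)$ combined with an internal quantitative approximation property of $\mathscr{E}_P(\R\times G)$, glued together by density of the restriction map --- is indeed the paper's strategy, and the qualitative ingredient is correctly identified. The gap is in your ingredient (ii). The inequality you propose to verify, $\|g\|_{M}\leq C\,\|g\|_{[t_1,t_2]\times K}^{1-\theta}\,\|g\|_{M'}^{\theta}$ for $g\in\mathscr{E}_P(\R\times G)$, is \emph{not} ``the quantitative form of Vogt's property $(\Omega)$''. It is an element-wise interpolation (three-balls / propagation-of-smallness) estimate of type $(DN)$, whereas the property actually needed here, $(\Omega)$ as in \eqref{def-Omega}, is a \emph{decomposition} property: for every $g$ and every $\varepsilon$ one must exhibit $h_\varepsilon\in\mathscr{E}_P(\R\times G)$ with $\|g-h_\varepsilon\|_{[t_1,t_2]\times K}\leq\varepsilon\|g\|_{L'}$ and $\|h_\varepsilon\|_{M}\leq C\varepsilon^{-s}\|g\|_{L'}$. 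An upper bound for $\|g\|_M$ in terms of norms of $g$ on smaller sets does not produce such a splitting; the two conditions are dual in nature and not interchangeable, and no standard ``abstract principle'' converts the former into the latter. (Propagation of smallness is precisely the engine of the R\"uland--Salo duality approach, which the paper explicitly does not follow.) So even if your interpolation inequality were established, the proof would not close.

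What the paper actually does for (ii) is the following. Since $P$ is semi-elliptic with $\{P_m=0\}$ one-dimensional, Theorem \ref{theo: cases of Omega}(b) gives surjectivity of the augmented operator $P^+(D)$ on $\mathscr{D}'((\R\times G)\times\R)$; Proposition \ref{lem: explicit POmega} then yields a quantitative decomposition for \emph{distributional} zero solutions (via a result of Wengenroth, Grothendieck's factorization theorem and the kernel theorem), which is upgraded to smooth solutions by mollification and the Mittag-Leffler lemma in Proposition \ref{cor: explicit Omega}. Moreover, the compact sets entering that decomposition must form an augmentedly $P$-locating pair, which for this operator means that the relevant domain contains no bounded connected component of the complement of the inner compact set in any characteristic hyperplane $\{t=c\}$. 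The theorem's hypothesis only controls $\R^n\setminus K$, not $\R^n\setminus H$ or $\R^n\setminus L$, so one must first invoke the continuous-boundary assumption on $U$ together with Lemma \ref{lem: geometric} to replace $H$ and to manufacture an intermediate $H'$ with the right topology. Your proposal omits this geometric step entirely, and it also mischaracterizes the proof of Theorem \ref{theo: quantitative Lax-Malgrange}, which is not obtained by Cauchy-type real-analyticity estimates but by exactly the functional-analytic route just described.
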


\noindent Theorem \ref{theo: quantitative parabolic} is particularly applicable to parabolic operators like the heat operator.

We also consider the wave operator in one spatial variable. 

\begin{theorem}\label{theo: quantitative Runge for wave operator}
	Let $P(D) = \frac{\partial^2}{\partial_t^2}- \frac{\partial^2}{\partial_x^2}$ and let  $X\subseteq\R^2$ be open such that $X$ is $P$-convex for supports. Let $K \subseteq X$ be compact and let $Y \subseteq \R^2$ be open with $K \subseteq Y \subseteq X$ such that there is no  $x\in\R^2$ such that $X$ contains a compact connected component of one of the sets $(\R^2\backslash Y)\cap\{(x_1+s,x_2+s)\,;\,s\in\R\}$ or $(\R^2\backslash Y)\cap\{(x_1-s,x_2+s)\, ; \,s\in\R\}$. Then, for all $L \subseteq Y$ compact with $L=\overline{\mbox{int}(L)}$ and $K \subseteq \operatorname{int} L$  such that there is no $x\in\R^2$ such that $Y$ contains a bounded connected component of one of the sets $(\R^2\backslash L)\cap \{(x_1+s,x_2+s)\, ; \,s\in\R\}$, or $(\R^2\backslash L)\cap\{(x_1-s,x_2+s)\, ; \,s\in\R\}$, $M \subseteq X$ compact and $r_1,r_2 \in \N_0$ there are $s, C>0$ such that 
	\begin{gather*}
		\forall f\in\mathscr{E}_P(Y), \varepsilon\in (0,1) \,\exists h_\varepsilon \in\mathscr{E}_P(X) \, :\\
		\|f-h_\varepsilon\|_{K,r_1}\leq \varepsilon\|f\|_{L,r_1+1} \quad \mbox{ and } \quad \|h_\varepsilon\|_{M,r_2}\leq\frac{C}{\varepsilon^s}\|f\|_{L}.
	\end{gather*}
\end{theorem}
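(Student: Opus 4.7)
The plan is to combine a qualitative Runge approximation theorem for the $1$D wave operator with an abstract quantification procedure based on linear topological invariants of the kernel Fr\'echet spaces $\mathscr{E}_P(\cdot)$. First I would establish that, under the hypothesis on characteristic slices $(\R^2\setminus Y)\cap\ell$ (where $\ell$ runs over lines of slope $\pm 1$, i.e.\ the characteristics of $P$), the restriction $r^P_{\mathscr{E}}:\mathscr{E}_P(X)\to\mathscr{E}_P(Y)$ has dense range. By Hahn--Banach duality this reduces to showing that every compactly supported distribution $\mu$ on $Y$ annihilating the image already annihilates $\mathscr{E}_P(Y)$. Convolving $\mu$ with a fundamental solution $E$ of the (self-adjoint) wave operator yields $E\ast\mu$ whose support propagates precisely along the two characteristic cones; the geometric hypothesis, applied to each of the two characteristic foliations in turn, forces the support of $E\ast\mu$ to lie in $\R^2\setminus X$, which implies $\mu\in\mathscr{E}_P(Y)^{\perp}$. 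Because $X$ is $P$-convex for supports, this suffices to conclude density. A parallel argument, using the second geometric hypothesis, gives density for the pair $(Y,\operatorname{int}(L))$ and is what allows the sharper seminorm $\|f\|_L$ on the right-hand side of the second estimate.

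Next I would apply the abstract quantification scheme developed earlier in the paper, which converts a dense-range continuous linear map between Fr\'echet spaces into a quantitative approximation $\|h_\varepsilon\|_{M,r_2}\leq C\varepsilon^{-s}\|f\|_L$, provided that the source and target spaces carry compatible linear topological invariants: a property $(\Omega)$-type condition on $\mathscr{E}_P(X)$ (controlling how bounded sets sit in the dual) and a $(DN)$-type estimate on $\mathscr{E}_P(Y)$ relating its canonical seminorms to the sup-seminorm on $L$. The appearance of $\|f\|_{L,r_1+1}$ rather than $\|f\|_{L,r_1}$ on the approximation side is a feature of the non-hypoelliptic case, dictated by the graded structure of the seminorms \eqref{norms-E}: one must pay one extra derivative in the interpolation step, as no elliptic interior regularity is available to reduce to sup-seminorms.

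The main obstacle will be verifying the $(DN)$-type estimate for $\mathscr{E}_P(Y)$ in the $1$D wave setting, where two independent characteristic directions are active simultaneously. In the parabolic case (Theorem \ref{theo: quantitative parabolic}) there is a single distinguished characteristic direction, and in the elliptic case (Theorem \ref{theo: quantitative Lax-Malgrange}) interior regularity trivialises this step; here one must track control along both light-cone foliations at once. The fatness assumption $L=\overline{\operatorname{int}(L)}$ enters exactly at this point: without it, characteristic segments could hit $L$ only on a boundary set and the dual bound would degenerate, destroying the sharp $\|f\|_L$ control. Once qualitative density and these invariants are in place, the quantitative estimate follows by a now-standard Mittag--Leffler/Baire argument from the abstract scheme.
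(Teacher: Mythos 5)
Your high-level architecture (qualitative density plus an abstract quantification step driven by linear topological invariants and a Mittag--Leffler argument) matches the paper's strategy, but two of your key mechanisms are not the ones that make the argument work, and one of them would fail outright.

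First, the quantification in the paper does \emph{not} rest on a $(DN)$-type estimate for $\mathscr{E}_P(Y)$, and no such estimate is available: kernel spaces of the form $\mathscr{E}_P(Y)$ on open sets carry no continuous norm (for the wave operator, $\mathscr{E}_P(Y)$ locally contains all functions $g_1(x+t)+g_2(x-t)$), so any genuine $(DN)$ condition degenerates. The entire quantitative content comes from a single condition of type $(\Omega)$ for $\mathscr{E}_P(X)$, but with \emph{control on the compact sets} occurring in it: one must be able to choose the ``loss'' compact $L(K,k)$ inside $Y$. This is exactly what the notion of an augmentedly $P$-locating pair (Definition \ref{def: augmenedtly locating}) encodes, and the engine producing the $\varepsilon^{-s}$ bound is Proposition \ref{lem: explicit POmega} (a $(P\Omega)$-type decomposition on closures of relatively compact subsets, obtained from the surjectivity of the augmented operator $P^+(D)$ on $\mathscr{D}'(X\times\R)$ via Grothendieck factorization and the kernel theorem), followed by the mollification and Mittag--Leffler steps of Proposition \ref{cor: explicit Omega}. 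Your proposal never identifies the surjectivity of $P^+(D)$ as the source of the estimate; for the wave operator this is supplied by Theorem \ref{theo: cases of Omega} together with Lemma \ref{prop: composition of differential operators}, using $P$-convexity of $X$. Consequently your reading of the hypothesis on $L$ is also off target: the condition that $Y$ contains no bounded connected component of the characteristic slices of $\R^2\backslash L$ is not a dual-norm nondegeneracy condition but is what makes $L$ augmentedly $P$-locating for $X$ (after upgrading ``contained in $Y$'' to ``contained in $X$'' via Lemma \ref{prop: geometric}), i.e.\ it localizes supports and singular supports of $f\in\mathscr{E}'(X\times\R)$ with $\operatorname{(sing)\,supp}\check{P}^+(D)f\subseteq L\times[a,b]$.

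Second, you do not use the factorization $P=P_1P_2$ with $P_1(\xi)=\xi_1+\xi_2$, $P_2(\xi)=\xi_2-\xi_1$, which is the structural fact the whole proof hinges on (the paper stresses that its argument does not extend beyond one spatial variable precisely for this reason). Both the qualitative density (Theorem \ref{theo: Runge for affine factors}, via the subspace-elliptic Runge theorem applied to each factor) and the locating property of $L$ (Lemma \ref{lem: location of sing/supp-subspace} applied to each $P_j$, which acts along $\operatorname{span}\{(1,\pm1)\}$ and is elliptic there) are obtained factor by factor; ``tracking both light-cone foliations at once,'' as you propose, is exactly what one avoids. Your duality sketch for density is also imprecise (one needs that the solution $\nu$ of $\check{P}(D)\nu=\mu$ has support in $Y$, not that $\operatorname{supp}(E\ast\mu)\subseteq\R^2\backslash X$), though the underlying idea of propagation along characteristics is the right one and is implemented in the paper through Proposition \ref{abstract-Runge}(iii). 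Finally, your explanation of the extra derivative in $\|f\|_{L,r_1+1}$ is essentially correct: it is the cost of the mollification $f\mapsto f\ast\chi_\varepsilon$ in Proposition \ref{cor: explicit Omega}, not of any interpolation between dual norms.
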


\noindent Theorems \ref{theo: quantitative convex}-\ref{theo: quantitative parabolic} are more natural than Theorem \ref{theo: quantitative Runge for wave operator} in the sense that in the former results we impose
geometric conditions on the compact set on which we approximate the given function $f$,  while in the latter we need to impose such conditions on its supersets $Y$ and $L$. We believe that an analogue of Theorem \ref{theo: quantitative parabolic} for the wave operator in one spatial variable is true but are unable to show this.

To the best of our knowledge, approximation results for the wave operator have not yet been studied. The proof of Theorem  \ref{theo: quantitative Runge for wave operator} depends heavily on the fact that the wave operator in one spatial variable can be factored into two first order operators. Therefore, our proof cannot be extended to the multidimensional setting.  It would be very interesting to find Runge type approximation results for the wave operator in several spatial variables.

As already stated above,  Malgrange \cite[Chapitre 1.2, Th\'eor\`eme 2]{Malgrange} (see also \cite[Theorem 10.5.1]{HoermanderPDO2}) showed that the linear span of the  exponential solutions  of $P(D)$ is always dense in $\mathscr{E}_P(\R^d)$. Moreover, by \cite[Chapitre 1.2, Th\'eor\`eme $2^\prime$]{Malgrange}, polynomial solutions of $P(D)$ are dense in $\mathscr{E}_P(\R^d)$ if and only if every non-constant factor of $P$ vanishes at the origin. Hence, in any of the above global approximation results, i.e., when $X=\R^d$, the approximating function $h_\varepsilon$ can always be chosen to be a linear combination of exponential solutions of $P(D)$. If every non-constant factor of $P$ vanishes at the origin, $h_\varepsilon$ can even be chosen to be a polynomial solution of $P(D)$.

Finally, we  would like to point out that the way we deduce quantitative approximation results from qualitative ones is different from the one used by  R\"uland and Salo in \cite{RuSa19, RuSa20}. Their method relies on a quantitative unique continuation result and techniques from spectral theory for Hilbert spaces (singular value decompositions) combined with a duality argument. Instead, we use here  an  \emph{internal} quantitative approximation result for the space $\mathscr{E}_P(X)$ (Proposition \ref{cor: explicit Omega}) to achieve this goal. The latter is shown in two steps: First, we prove an analogous result for the space of distributional zero solutions (Proposition \ref{lem: explicit POmega}) by combining  a result from \cite[Section 3.4.5]{Wengenroth} with abstract functional analytic arguments (Grothendieck's factorization theorem and the Schwartz kernel theorem). Hereafter, we deduce the desired result for the space of smooth zero solutions from it by a regularization procedure developed in our recent article \cite{DeKa22}.  This method is heavily inspired by  the study of linear topological invariants for kernels of partial differential operators \cite{B-D2006,DeKa22, Kalmes19,Vogt1983}, we explain this relation at the end of Section \ref{sec: prelim}.

The article is organized as follows. In Section \ref{sec: prelim} we discuss some preliminary notions and results.  We also give an outline of our strategy to deduce quantitative approximation results from qualitative ones. Section \ref{sec: qualitative Runge} is devoted to qualitative Runge type approximation theorems. We show an approximation result for subspace elliptic differential operators and apply it to obtain such a result for differential operators that factor into first order operators, including the wave operator in one spatial variable. The main technical core of our work is given in Section \ref{sec: technical}, where we prove a crucial auxiliary quantitative approximation result for the space $\mathscr{E}_P(X)$.  Finally, in Section \ref{sec: quantitative Runge}, we  establish quantitative Runge type approximation theorems for various types of operators. In particular, we show Theorems \ref{theo: quantitative convex}-\ref{theo: quantitative Runge for wave operator}.

\section{Preliminaries and outline of the proof strategy}\label{sec: prelim}

In this section we fix the notation and  recall various notions and results which will be used throughout. We also  sketch  our strategy to obtain quantitative Runge type approximation results. We use standard terminology from functional analysis \cite{Jarchow1981, M-V}, distribution theory \cite{HoermanderPDO1}, and linear partial differential operators \cite{HoermanderPDO2}.  

Let $X \subseteq \R^d$ be open.  We denote by $\mathscr{E}(X)$ the Fr\'echet space of smooth  functions on $X$, its topology is generated by the seminorms \eqref{norms-E}. Given $K \subseteq \R^d$ compact, we write $\mathscr{D}(K)$ for the space of smooth functions on $\R^d$ with support in $K$ and endow it with the subspace topology from $\mathscr{E}(\R^d)$. The space of test functions on $X$ is given by
$$
\mathscr{D}(X) = \varinjlim_{\tiny{K \subseteq X \mbox{ compact}}} \mathscr{D}(K).
$$
The space of distributions $\mathscr{D}'(X)$ is defined as the dual of $\mathscr{D}(X)$, we  endow it with its strong dual topology.

Let $P \in \C[\xi_1, \ldots, \xi_d]$ and consider the corresponding  differential operator $P(D)=P(-i\frac{\partial}{\partial x_1},\ldots,-i\frac{\partial}{\partial x_d})$. We set $\check{P}(\xi)=P(-\xi)$.  Let $P(\xi)=\sum_{|\alpha| \leq m}a_\alpha\xi^\alpha$, with $m$ the degree of $P$. We denote by $P_m$ the principal part of $P$, i.e., $P_m(\xi)=\sum_{|\alpha|=m}a_\alpha\xi^\alpha$. 
A hyperplane $\{x\in\R^d \,;\,\langle N,x\rangle=c\}$, $N \in\R^d\backslash\{0\}$, $c\in\R$, is called  a \emph{characteristic hyperplane} for $P(D)$ if  $P_m(N)=0$. The differential operator $P(D)$ defines a continuous linear self map on both $\mathscr{E}(X)$  and $\mathscr{D}'(X)$. We set 
$$\mathscr{E}_P(X)=\{f\in\mathscr{E}(X) \,;\, P(D)f=0\} \quad \mbox{and} \quad \mathscr{D}'_P(X)=\{f\in\mathscr{D}'(X) \,;\, P(D)f=0\},$$
and endow these spaces with the subspace topology from $\mathscr{E}(X)$ and $\mathscr{D}'(X)$, respectively. 

Malgrange \cite{Malgrange} (see also \cite[Section 10.6]{HoermanderPDO2}) showed  that $P(D): \mathscr{E}(X) \to\mathscr{E}(X)$ is surjective  if and only if $X$ is \emph{$P$-convex for supports}, i.e., if for every $K \subseteq X$ compact there is $L \subseteq X$ compact  such that for all $f\in\mathscr{E}'(X)$
$$\supp\check{P}(D)f \subseteq K\Longrightarrow\supp f \subseteq L.$$ 
H\"ormander \cite[Section 10.7]{HoermanderPDO2} showed that $P(D): \mathscr{D}'(X) \to \mathscr{D}'(X)$ is surjective if and only if $X$ is both $P$-convex for supports as well as \emph{$P$-convex for singular supports}, the latter meaning that for every $K \subseteq X$ compact there is $L \subseteq X$ compact  such that for all $f\in\mathscr{E}'(X)$
$$\singsupp\check{P}(D)f \subseteq K\Longrightarrow\singsupp f \subseteq L.$$ 
By the theorems of support and singular support \cite[Theorem 7.3.2 and Theorem 7.3.9]{HoermanderPDO1}, convex open sets $X$ are  $P$-convex for supports and singular supports for any $P\neq 0$. If $P$ is elliptic,  every $X \subseteq \R^d$ open is $P$-convex for supports and singular supports \cite[Corollary 10.8.2 and Theorem 11.1.1]{HoermanderPDO2}.

We define $P^+(\xi_1,\ldots,\xi_{d+1})=P(\xi_1,\ldots,\xi_d)$ and call the corresponding differential operator $P^+(D)$  the \emph{augmented operator} of $P(D)$. As explained at the end of this section, the surjectivity of $P^+(D): \mathscr{D}'(X\times\R) \to \mathscr{D}'(X\times\R)$ will be one of our basic assumptions for establishing  quantitative Runge type approximation results. Note that $P(D): \mathscr{D}'(X) \to \mathscr{D}'(X)$ is always surjective if $P^+(D): \mathscr{D}'(X\times\R) \to \mathscr{D}'(X\times\R)$ is so. The question whether  $P^+(D): \mathscr{D}'(X\times\R) \to \mathscr{D}'(X\times\R)$ is surjective if $P(D): \mathscr{D}'(X) \to \mathscr{D}'(X)$ is so, was posed by Bonet and Doma\'nski \cite[Proposition 8.1]{B-D2006} in connection with the problem of parameter dependence of surjective differential operators. In \cite{Kalmes12-3}  the second author showed that for $d=2$ the augmented operator $P^+(D): \mathscr{D}'(X\times\R) \to \mathscr{D}'(X\times\R)$   of a surjective  differential operator $P(D): \mathscr{D}'(X) \to \mathscr{D}'(X)$   is always surjective, while, for $d \geq 3$,  he constructed in \cite{Kalmes12-2} a (hypoelliptic) differential operator and an open set $X \subseteq \R^d$ such that $P(D): \mathscr{D}'(X) \to \mathscr{D}'(X)$  is surjective but $P^+(D): \mathscr{D}'(X\times\R) \to \mathscr{D}'(X\times\R)$  is not. 
 
Next, we give some positive results concerning the surjectivity of the augmented operator for certain classes of differential operators \cite{Kalmes12-3, Kalmes19}. To this end,  recall that a function $f:X\rightarrow[0,\infty]$, $X\subseteq\R^d$ open, is said to satisfy the \emph{minimum principle} in a closed set $F\subseteq\R^d$ if for every compact subset $K$ of $X\cap F$ it holds that
$$\inf_{x\in K}f(x)=\inf_{x\in\partial_F K}f(x),$$
where $\partial_F K$ denotes the boundary of $K$ in $F$.  The  \emph{boundary distance} of $X$ is given by the  function 
$$
d_X:X\rightarrow [0,\infty], \quad d_X(x)=\inf\{|x-y| \, ; \, y\in\R^d\backslash X\}.
$$ 
A polynomial $P\in\C[\xi_1,\ldots,\xi_d]$ is said to \emph{act along a  subspace} $W\subseteq\R^d$ if $P(\xi)=P(\pi_W\xi)$ for all $\xi\in\R^d$, where $\pi_W$ denotes the orthogonal projection  onto $W$. A polynomial $P$ which acts along a subspace $W$ is said to be \emph{elliptic on} $W$ if its principal part $P_m$ satisfies $P_m(\xi)\neq 0$ for every $\xi\in W\backslash\{0\}$. In such a case, $P(D)$ is called \emph{subspace elliptic}. For example, the augmented operator of an elliptic operator is subspace elliptic.  We also mention that for a semi-elliptic polynomial $P$ with principal part $P_m$ the set $\{P_m=0\}=\{\xi\in\R^d \,; \, P_m(\xi)=0\}$ is a subspace of $\R^d$ (see e.g.\ \cite[Proposition 2(iii)]{FrKa}).
 
 \begin{theorem}\label{theo: cases of Omega}
 	Let $P\in\C[\xi_1,\ldots,\xi_d]$ and let $X\subseteq\R^d$ be open.
 	\begin{itemize}
 		\item[(a)] \cite[Theorem 9 and Theorem 16]{Kalmes19} Suppose that $P$ acts along the subspace $W$ of $\R^d$ and is elliptic there. Then, the following statements are equivalent:
 		\begin{itemize}
 			\item[(i)] $P(D):\mathscr{E}(X)\rightarrow\mathscr{E}(X)$ is surjective.
 			\item[(ii)] $P(D):\mathscr{D}'(X)\rightarrow\mathscr{D}'(X)$ is surjective.
 			\item[(iii)] $P^+(D):\mathscr{D}'(X\times\R)\rightarrow\mathscr{D}'(X\times\R)$ is surjective.
 			\item[(iv)] The boundary distance $d_X$ satisfies the minimum principle in the affine subspace $x+W$ for every $x\in\R^d$.
 		\end{itemize}
 		\item[(b)] \cite[Corollary 5 and Theorem 16]{Kalmes19} Suppose that $P$ is semi-elliptic with principal part $P_m$ such that $\{P_m=0\}$ is one-dimensional. Then, the following statements are equivalent:
 		\begin{itemize}
 			\item[(i)] $P(D):\mathscr{E}(X)\rightarrow\mathscr{E}(X)$ is surjective.
 			\item[(ii)] $P(D):\mathscr{D}'(X)\rightarrow\mathscr{D}'(X)$ is surjective.
 			\item[(iii)] $P^+(D):\mathscr{D}'(X\times\R)\rightarrow\mathscr{D}'(X\times\R)$ is surjective.
 			\item[(iv)] The boundary distance $d_X$ satisfies the minimum principle in every characteristic hyperplane for $P(D)$.
 		\end{itemize}
 		\item[(c)] \cite[Theorem 21]{Kalmes12-3} Assume that $d=2$. Then, the following statements are equivalent:
		\begin{itemize}
 			\item[(i)] $P(D):\mathscr{E}(X)\rightarrow\mathscr{E}(X)$ is surjective.
 			\item[(ii)] $P(D):\mathscr{D}'(X)\rightarrow\mathscr{D}'(X)$ is surjective.
 			\item[(iii)] $P^+(D):\mathscr{D}'(X\times\R)\rightarrow\mathscr{D}'(X\times\R)$ is surjective.
 			\item[(iv)] The boundary distance $d_X$ satisfies the minimum principle in every characteristic line for $P(D)$.
 		\end{itemize}
 	\end{itemize}
 \end{theorem}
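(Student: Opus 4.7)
The plan is to prove the equivalences in each of (a), (b), (c) by establishing the cyclic chain (iii) $\Rightarrow$ (ii) $\Rightarrow$ (i) $\Rightarrow$ (iv) $\Rightarrow$ (iii). Two implications are essentially formal. For (iii) $\Rightarrow$ (ii), given $u \in \mathscr{D}'(X)$, I fix $\varphi \in \mathscr{D}(\R)$ with $\int \varphi = 1$, solve $P^+(D) U = u \otimes \varphi$ in $\mathscr{D}'(X \times \R)$ using (iii), and recover $v \in \mathscr{D}'(X)$ with $P(D) v = u$ by integrating $U$ against $1$ in $t$ against a suitable cut-off. For (ii) $\Rightarrow$ (i), $\mathscr{D}'$-surjectivity forces $P$-convexity of $X$ for supports, and by Malgrange's theorem this is already equivalent to $\mathscr{E}$-surjectivity.

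The implication (i) $\Rightarrow$ (iv) is the ``geometric half''. Here I invoke H\"ormander's classical characterization that $X$ is $P$-convex for supports if and only if $d_X$ satisfies the minimum principle on every characteristic hyperplane for $P(D)$. In case (a), the hypothesis that $P$ acts along $W$ and is elliptic there forces every characteristic direction to lie in $W$, so every characteristic hyperplane contains $W^\perp$; an averaging argument along $W^\perp$ reduces the minimum principle on those hyperplanes to one on each affine translate $x + W$. In case (b), the one-dimensionality of $\{P_m = 0\}$ makes (iv) a direct rephrasing of the minimum principle on characteristic hyperplanes. In case (c), characteristic hyperplanes in $\R^2$ are simply characteristic lines.

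The main obstacle is (iv) $\Rightarrow$ (iii). Beyond $P^+$-convexity for supports of $X \times \R$, one must also establish $P^+$-convexity for singular supports. For the support condition I use $d_{X \times \R}(x, t) = d_X(x)$ together with the fact that the characteristic hyperplanes of $P^+$ split into three classes: horizontal ones $\R^d \times \{c\}$, for which the minimum principle is automatic because $d_X$ has no interior local minima on $X$; vertical products $H \times \R$ with $H$ characteristic for $P$, for which the minimum principle is precisely (iv) (combined with the $W^\perp$-averaging in case (a)); and ``tilted'' characteristic hyperplanes realized as graphs over $\R^d$, which again reduce to the automatic minimum principle on $\R^d$. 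The genuinely hard step is the singular-support condition. Here I exploit the subspace-elliptic (respectively, one-dimensional-characteristic-variety) structure of $P$: because $P^+$ is essentially elliptic in directions transverse to $W$ (respectively, to $\{P_m = 0\}$), the characteristic set of $P^+$ relevant for singular-support propagation is severely constrained, and H\"ormander's criterion for $P$-convexity for singular supports reduces it to the same geometric minimum principle already established for supports. This careful analysis of the characteristic directions of $P^+$ is where the structural hypotheses on $P$ in (a), (b), (c) are indispensable, and it is where the main technical subtleties of the argument concentrate.
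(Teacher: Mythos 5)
The paper itself does not prove this statement: it is quoted verbatim from \cite[Theorem 9, Corollary 5, Theorem 16]{Kalmes19} and \cite[Theorem 21]{Kalmes12-3}, so there is no in-paper proof to compare against. Judged on its own terms, your proposal has the right skeleton (the cyclic chain, the formal implications (iii)$\Rightarrow$(ii)$\Rightarrow$(i) via Malgrange/H\"ormander), but it rests on a claim that is false in general: you invoke ``H\"ormander's classical characterization that $X$ is $P$-convex for supports \emph{if and only if} $d_X$ satisfies the minimum principle on every characteristic hyperplane.'' Only the necessity is classical (\cite[Theorem 10.8.5]{HoermanderPDO2}); the sufficiency fails for general $P$, and establishing it for the particular classes in (a), (b), (c) is precisely the nontrivial content of the cited theorems. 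Your step (iv)$\Rightarrow$(iii) therefore assumes the hard direction rather than proving it, both for $P^+$-convexity for supports and, as you yourself concede, for singular supports, where you gesture at a ``reduction'' without giving the argument (in the references this requires genuine constructions, e.g.\ the results of \cite{FrKa} and \cite[Lemma 8]{Kalmes19} on augmented semi-elliptic and subspace-elliptic operators).

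There is a second, more local failure in case (a) when $\dim W<d-1$. Condition (iv) there is a minimum principle on the affine subspaces $x+W$, which are of dimension $\dim W$, whereas H\"ormander's theorem only yields the minimum principle on characteristic hyperplanes, which are $(d-1)$-dimensional. For a compact $K\subseteq X\cap(x+W)$ the relative boundary of $K$ in any hyperplane $H\supseteq x+W$ is all of $K$ (since $K$ has empty interior in $H$), so the hyperplane minimum principle is vacuous for such $K$ and your proposed ``averaging argument along $W^\perp$'' cannot produce the lower-dimensional statement. The implication (i)$\Rightarrow$(iv) in \cite{Kalmes19} is proved by a direct construction, not by reduction to the hyperplane case. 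In short: the architecture is plausible, but the two implications that carry all the content — (i)$\Rightarrow$(iv) in the subspace case and (iv)$\Rightarrow$(iii) in all cases — are not actually established by the proposal.
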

 
\begin{remark} \label{rem:augmented-elliptic}
	Theorem \ref{theo: cases of Omega}(a) with $W= \R^d$ impies that $P^+(D):\mathscr{D}'(X\times\R)\rightarrow\mathscr{D}'(X\times\R)$ is always surjective if $P(D)$ is elliptic. This result is essentially due to Vogt \cite{Vogt1983}. 
\end{remark}

We now outline our strategy to obtain quantitative Runge type approximation results. In particular, we explain how our method is inspired by and connected to the linear topological invariants $(\Omega)$ for $\mathscr{E}_P(X)$ and  $(P\Omega)$ for $\mathscr{D}'_P(X)$. We refer to \cite{B-D2006,DeKa22,M-V, Kalmes19,Vogt1983} for more information on these conditions. The space $\mathscr{E}_P(X)$ satisfies $(\Omega)$ if for all  $K\subseteq X$ compact and  $k\in\N_0$ there are $L = L(K,k)\subseteq X$ compact and  $l=l(K,k)\in\N_0$ such that for all $M\subseteq X$ compact and  $m\in\N_0$ there are $C,s >0$ such that
\begin{gather}
	\label{def-Omega}
	\forall g\in\mathscr{E}_P(X), \varepsilon\in (0,1) \, \exists h_\varepsilon\in\mathscr{E}_P(X):\\ \nonumber
	\|g-h_\varepsilon\|_{K,k}\leq \varepsilon\|g\|_{L,l} \quad \mbox{ and } \quad \|h_\varepsilon\|_{M,m}\leq\frac{C}{\varepsilon^s}\|g\|_{L,l}.
\end{gather}
Condition $(\Omega)$ may be interpreted as an internal quantitative approximation result for the space $\mathscr{E}_P(X)$. Roughly speaking, the condition $(P\Omega)$ for $\mathscr{D}'_P(X)$ is an analogue of the condition $(\Omega)$ for $\mathscr{D}'_P(X)$ adapted to the linear topological structure of this space\footnote{$\mathscr{D}'_P(X)$ is a so-called  $(PLS)$-space \cite{B-D2006} and not metrizable.}, see \cite[Section 5]{B-D2006}  and \cite{DeKa22} for the precise definition.

 Now let $Y \subseteq X$ be open and suppose that the restriction map 
\begin{equation}
	\label{restriction}
 r_{\mathscr{E}}^P:\mathscr{E}_P(X)\rightarrow\mathscr{E}_P(Y), \, f \mapsto f_{\mid Y}
\end{equation}
 has dense range. Furthermore, assume that $\mathscr{E}_P(X)$ satisfies $(\Omega)$. Let $K\subseteq Y$ be compact and  $k\in\N_0$ be arbitrary. Assume that one can choose the compact $L(K,k)$ in the condition $(\Omega)$ inside  $Y$.  Let also $l(K,k)$ be as above. Then, a simple argument (cf. the proof of Proposition \ref{prop: general quantitative} below) shows that  for all $M\subseteq X$ compact and  $m\in\N_0$ there are $C,s >0$ such that
 \begin{gather*}
 	\forall f\in\mathscr{E}_P(Y), \varepsilon\in (0,1) \, \exists h_\varepsilon\in\mathscr{E}_P(X):\\
 	\|f-h_\varepsilon\|_{K,k}\leq \varepsilon\|f\|_{L,l} \quad \mbox{ and } \quad \|h_\varepsilon\|_{M,m}\leq\frac{C}{\varepsilon^s}\|f\|_{L,l},
 \end{gather*}
 which is a quantitative Runge type approximation result. Of course, this result is only interesting if we have information on how we can choose $L = L(K,k)$ and $l =l(K,k)$ in terms of the given $K$ and $k$ (we want $L$ and $l$ to be as close as possible to $K$ and $k$, respectively).  However, there does not seem to exist a method to show  $(\Omega)$ for $\mathscr{E}_P(X)$ that allows one to keep track of  $L(K,k)$ and $l(K,k)$ in terms of $K$ and $k$, even for specific type of operators (e.g.\ elliptic operators) or sets (e.g. convex sets).  The development of such a method 
 is one of the main novelties in our work.  We now give the basic idea of our approach to study this problem, proofs are given  in Section \ref{sec: technical}.   A  seminal result of Bonet and Doman\'ski \cite[Proposition 8.1]{B-D2006} asserts that $\mathscr{D}'_P(X)$ satisfes $(P\Omega)$ if and only if the augmented operator $P^+(D): \mathscr{D}'(X\times\R) \to \mathscr{D}'(X\times\R)$ is surjective. In our recent work \cite{DeKa22}, we showed that, if $P(D): \mathscr{D}'(X) \to \mathscr{D}'(X)$ is surjective, then  $\mathscr{D}'_P(X)$ satisfes $(P\Omega)$ if and only if  $\mathscr{E}_P(X)$ satisfes $(\Omega)$. Inspired by these results, we will tackle the above problem in two steps. First, we establish a quantitative approximation result (with control on the compacts occurring in it) for distributional zero solutions of $P(D)$ under the assumption that  $P^+(D): \mathscr{D}'(X\times\R) \to \mathscr{D}'(X\times\R)$ is surjective (Proposition \ref{lem: explicit POmega}). Next, we  use the same method  as in \cite{DeKa22}  to deduce from this result information on how one can choose $L(K,k)$ and $l(K,k)$ occurring in $(\Omega)$ in  terms of $K$ and $k$ (Propositon \ref{cor: explicit Omega}).

Summarizing, we will be able to deduce a quantitative approximation result from a qualitative  one (meaning that the restriction map \ref{restriction} has dense range) under the assumption  that the augmented operator $P^+(D): \mathscr{D}'(X\times\R) \to \mathscr{D}'(X\times\R)$ is surjective. 

\section{Qualitative Runge type approximation results}\label{sec: qualitative Runge}

The purpose of this section is to show a qualitative Runge type approximation theorem  for subspace elliptic differential operators.  As a consequence, we will give an approximation result for differential operators that factor into first order operators, including the  wave operator in one spatial variable. Furthermore, we recall a  Runge type approximation theorem for semi-elliptic operators with a single characteristic direction from \cite{Kalmes21}.

We will use the following characterization of the Runge approximation property, see \cite[Theorem 6]{Kalmes21} and \cite[Theorem 26.1]{Treves1967-2}.

\begin{proposition}\label{abstract-Runge}
	Let $P\in\C[\xi_1,\ldots,\xi_d] \backslash \{0\}$ and let $X,Y \subseteq\R^d$ be open with $Y \subseteq X$ such that $X$ is $P$-convex for supports. Then, the following statements are equivalent:
		\begin{itemize}
			\item[(i)] $Y$ is $P$-convex for supports and the restriction map $$
			r_{\mathscr{E}}^P:\mathscr{E}_P(X)\rightarrow\mathscr{E}_P(Y), \, f \mapsto f_{\mid Y}
			$$ has dense range.
		\item[(ii)] $Y$ is $P$-convex for supports and the restriction map $$
		r_{\mathscr{D}'}^P:\mathscr{D}'_P(X)\rightarrow\mathscr{D}'_P(Y), \, f \mapsto f_{\mid Y}
		$$ has dense range.
		\item[(iii)] For every $\varphi\in\mathscr{D}(X)$ with $\supp\check{P}(D)\varphi\subseteq Y$ it holds that $\supp\varphi\subseteq Y$.
	\end{itemize}
\end{proposition}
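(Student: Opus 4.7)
The plan is to close the cycle (i)$\Rightarrow$(iii), (ii)$\Rightarrow$(iii), (iii)$\Rightarrow$(i), (iii)$\Rightarrow$(ii), using duality throughout. The key duality tool, afforded by $X$ being $P$-convex for supports, comes from Malgrange's theorem: $P(D):\mathscr{E}(X)\to\mathscr{E}(X)$ is surjective, so dualizing the short exact sequence
$$0\to\mathscr{E}_P(X)\hookrightarrow\mathscr{E}(X)\xrightarrow{P(D)}\mathscr{E}(X)\to 0$$
(via the open mapping theorem for Fr\'echet spaces) yields the exact sequence
$$0\to\mathscr{E}'(X)\xrightarrow{\check P(D)}\mathscr{E}'(X)\to\mathscr{E}_P(X)'\to 0.$$
In particular $\check P(D):\mathscr{E}'(X)\to\mathscr{E}'(X)$ is injective and the annihilator of $\mathscr{E}_P(X)$ in $\mathscr{E}'(X)$ is exactly $\check P(D)\mathscr{E}'(X)$; the analogous identities hold for $Y$ whenever $Y$ is $P$-convex for supports.

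For (i)$\Rightarrow$(iii) (the implication (ii)$\Rightarrow$(iii) follows by exactly the same argument since $\mathscr{E}_P(X)\subseteq\mathscr{D}'_P(X)$): given $\varphi\in\mathscr{D}(X)$ with $\supp\check P(D)\varphi\subseteq Y$, view $\check P(D)\varphi\in\mathscr{E}'(Y)$. For every $f\in\mathscr{E}_P(X)$, integration by parts gives
$$\langle f|_Y,\check P(D)\varphi\rangle_Y=\langle P(D)f,\varphi\rangle_X=0,$$
and density of restrictions upgrades this to $\langle g,\check P(D)\varphi\rangle_Y=0$ for all $g\in\mathscr{E}_P(Y)$. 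The $Y$-version of the annihilator identity (which needs the $P$-convexity of $Y$ that (i) assumes) then produces $S\in\mathscr{E}'(Y)$ with $\check P(D)\varphi=\check P(D)S$, and injectivity of $\check P(D)$ on $\mathscr{E}'(X)$ forces $\varphi=S$, so $\supp\varphi\subseteq Y$.

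For the converse direction (iii)$\Rightarrow$(i) I argue on two fronts. For density, a Hahn--Banach functional on $\mathscr{E}_P(Y)$ is represented by some $T\in\mathscr{E}'(Y)$; if it annihilates $r^P_{\mathscr{E}}(\mathscr{E}_P(X))$, then the zero extension $\tilde T\in\mathscr{E}'(X)$ lies in $\mathscr{E}_P(X)^{\perp}=\check P(D)\mathscr{E}'(X)$, say $\tilde T=\check P(D)S$. The mollifications $S_\eta:=S\ast\rho_\eta\in\mathscr{D}(X)$ satisfy $\check P(D)S_\eta=\tilde T\ast\rho_\eta$, which is supported in a small enlargement of $\supp T\subseteq Y$, hence inside $Y$ for $\eta$ small enough; condition (iii) then forces $S_\eta\in\mathscr{D}(Y)$, so $\langle g,\check P(D)S_\eta\rangle=\langle P(D)g,S_\eta\rangle=0$ for every $g\in\mathscr{E}_P(Y)$, and passing $\eta\to 0^+$ delivers $\langle g,T\rangle=0$. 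To verify $P$-convexity of $Y$, given compact $K\subseteq Y$ I choose a compact neighbourhood $K\subseteq K'\subseteq Y$ and let $L\subseteq X$ be the compact produced by $P$-convexity of $X$ applied to $K'$; for any $T\in\mathscr{E}'(Y)$ with $\supp\check P(D)T\subseteq K$, mollifying at scales below $\dist(K,\partial Y)$ and $\dist(\supp T,\partial Y)$ yields test functions in $\mathscr{D}(X)$ whose $\check P(D)$-image is supported in $K'\subseteq Y$, so (iii) together with the $P$-convexity of $X$ confines their supports to $L\cap Y$, and passage to the limit places $\supp T$ in a common compact subset of $Y$. The implication (iii)$\Rightarrow$(ii) follows by the same template, with density interpreted through the weak-$\ast$ duality between $\mathscr{D}'(Y)$ and $\mathscr{D}(Y)$.

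The main obstacle I anticipate is the step (iii)$\Rightarrow$``$Y$ is $P$-convex for supports'': while (iii) only controls test functions, $P$-convexity is a statement about arbitrary compactly supported distributions, so reconciling the two requires a careful balancing of the mollifier scale against $\dist(K,\partial Y)$ and $\dist(\supp T,\partial Y)$, together with a clean passage to the limit to transfer the bound on the mollified supports to $\supp T$ itself.
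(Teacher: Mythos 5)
The paper does not prove this proposition itself --- it is quoted from \cite[Theorem 6]{Kalmes21} and \cite[Theorem 26.1]{Treves1967-2} --- so I assess your argument on its own merits. The duality skeleton is sound and is the standard route: from surjectivity of $P(D)$ on $\mathscr{E}(X)$ (Malgrange plus $P$-convexity) you correctly get $\mathscr{E}_P(X)^{\perp}=\check P(D)\mathscr{E}'(X)$, injectivity of $\check P(D)$ on $\mathscr{E}'(\R^d)$ is available via Paley--Wiener, and the implications (i)$\Rightarrow$(iii), (ii)$\Rightarrow$(iii), and the \emph{density} halves of (iii)$\Rightarrow$(i) and (iii)$\Rightarrow$(ii) (Hahn--Banach, zero extension, mollification of $S$, and passage to the limit) all go through as you describe.

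The genuine gap is exactly where you suspected it: the step (iii)$\Rightarrow$``$Y$ is $P$-convex for supports''. Your argument shows that every admissible $T\in\mathscr{E}'(Y)$ with $\supp\check P(D)T\subseteq K$ has $\supp T\subseteq L\cap Y$, where $L\Subset X$ comes from the $P$-convexity of $X$. But $L\cap Y$ is \emph{not} a compact subset of $Y$ (it is a relatively open piece of $L$ that may accumulate on $\partial Y$), whereas the definition of $P$-convexity demands a single compact $L'\Subset Y$, uniform over all such $T$. No tuning of the mollifier scale repairs this: the scale only enters in ensuring $T\ast\rho_\eta\in\mathscr{D}(X)$ and $\supp\check P^{\vee}(D)(T\ast\rho_\eta)\subseteq K'$, and nothing in the resulting bound prevents the supports of \emph{different} $T$'s from creeping arbitrarily close to $\partial Y$ inside $L\cap Y$. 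What is actually needed is the quantitative statement $d(\supp T,\R^d\setminus Y)\geq d(\supp\check P(D)T,\R^d\setminus Y)$, after which $L'=ch(K)\cap\{x\,;\,d_Y(x)\geq d(K,\R^d\setminus Y)\}$ works; this is proved not by mollification alone but by a translation argument in the spirit of \cite[Theorem 10.6.3]{HoermanderPDO2} (compare the paper's own Lemma \ref{lem: entailing minimum principle}): one slides the (regularized) $T$ toward its nearest boundary point of $Y$, observes that the $\check P(D)$-images of the translates stay inside $Y$ so that (iii) keeps applying, and uses the $P$-convexity of $X$ to control the translates that threaten to leave $X$. This piece of the argument is missing from your proposal and cannot be supplied by ``passage to the limit'' as written.
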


We start with a  useful geometrical lemma.

\begin{lemma}\label{lem: entailing minimum principle}
	Let $Y \subseteq X\subseteq\R^d$ be open. Let $F\subseteq\R^d$ be  a non-empty closed set such that $d_X$ satisfies the minimum principle in $x+F$ for every $x\in\R^d$. Suppose that there is no $x\in\R^d$ such that $X$ contains a compact connected component of $(\R^d\backslash Y)\cap (x+F)$. Then, $d_Y$ satisfies the minimum principle in $x+F$ for every $x\in\R^d$.
\end{lemma}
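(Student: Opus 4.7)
The approach is by contradiction. Suppose for some $x_0 \in \R^d$ the minimum principle for $d_Y$ fails on $A := x_0 + F$: there is a compact $K \subseteq Y \cap A$ with $c' := \min_K d_Y$ strictly less than $c := \min_{\partial_A K} d_Y$ (and $c > 0$, since $K \subseteq Y$ is compact). The aim is to produce, from this failure, a compact connected component of $(\R^d \setminus Y) \cap (x' + F)$ contained in $X$ for some $x' \in \R^d$, contradicting the hypothesis.

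Since $d_X \geq d_Y \geq c$ on $\partial_A K$, the minimum principle for $d_X$ in $A$ yields $d_X \geq c$ throughout $K$. Pick $z_0 \in K$ with $d_Y(z_0) = c'$; necessarily $z_0 \in \operatorname{int}_A K$. Let $V$ be the connected component of $\{z \in A : d_Y(z) < c\}$ containing $z_0$. Routine connectedness arguments give $V \subseteq \operatorname{int}_A K$, $\overline V \subseteq K$ compact, and $\partial_A V \subseteq \{z \in A : d_Y(z) = c\} \subseteq Y$.

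I would then distinguish two cases with $C := (\R^d \setminus Y) \cap A$. In the easy case $V \cap C \neq \emptyset$, pick $y \in V \cap C$ and let $D$ be its connected component in $C$. Since $\partial_A V \subseteq \{d_Y = c > 0\}$ while $D \subseteq \{d_Y = 0\}$, the set $D$ is disjoint from $\partial_A V$, so by connectedness $D \subseteq \overline V \subseteq K \subseteq X$. As $D$ is closed in $C$ and bounded, it is compact, giving a compact connected component of $(\R^d \setminus Y) \cap (x_0 + F)$ inside $X$ and contradicting the hypothesis with $x_0$.

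The delicate case $V \cap C = \emptyset$ is the main obstacle. Then $V \subseteq Y$, so $\mu := \min_{\overline V} d_Y$ satisfies $0 < \mu \leq c' < c$ and is attained at some $z_1 \in V$ (it cannot lie on $\partial_A V$, where $d_Y = c > \mu$). Pick $y_1 \in \R^d \setminus Y$ with $|z_1 - y_1| = \mu$; since $d_X(z_1) \geq c > \mu$ one has $y_1 \in X \setminus Y$, but typically $y_1 \notin A$, so the previous argument does not run inside $A$. The key move is to translate by $u := y_1 - z_1$: set $x' := x_0 + u$, $A_1 := x' + F = A + u$, and $K_1 := \overline V + u \subseteq A_1$. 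Then $y_1 = z_1 + u \in \operatorname{int}_{A_1} K_1$, and $|u| = \mu$ combined with the $1$-Lipschitz property of distance functions yields $d_X \geq c - \mu > 0$ on $K_1$ (hence $K_1 \subseteq X$) and $d_Y \geq c - \mu > 0$ on $\partial_{A_1} K_1$ (hence $\partial_{A_1} K_1$ is disjoint from $\R^d \setminus Y$). Consequently, the connected component $D_1$ of $y_1$ in $C_1 := (\R^d \setminus Y) \cap A_1$ cannot cross $\partial_{A_1} K_1$, so $D_1 \subseteq K_1 \subseteq X$ is compact, producing a compact connected component of $(\R^d \setminus Y) \cap (x' + F)$ in $X$ and contradicting the hypothesis with $x'$. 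The subtlety lies in identifying this translate $A_1 = A + (y_1 - z_1)$; once that is fixed, all the estimates reduce to the single bound $|u| = \mu$, and the argument works uniformly for any closed $F$.
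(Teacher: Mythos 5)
Your proof is correct. It shares the paper's core mechanism — argue by contradiction and translate the configuration by the vector $u$ from a $d_Y$-minimizer to its nearest point of $\R^d\setminus Y$, so that the bad component of $(\R^d\setminus Y)\cap(x'+F)$ gets trapped inside a translated compact set — but it handles the crucial step differently and, in my view, more cleanly. The paper translates $K$ itself, observes that the resulting component $C_{y_0}$ is compact, and then must deal separately with the possibility that $C_{y_0}$ is not contained in $X$: it does so with a sliding argument (the infimum $t_0$ over translates $t(y_0-z_0)+K$ meeting $\partial X$) that manufactures a violation of the minimum principle for $d_X$. You instead invoke the minimum principle for $d_X$ \emph{up front}, on $K$ itself, to get $d_X\geq c$ throughout $K$; since $|u|=\mu<c$, the $1$-Lipschitz bound then forces the translated set $K_1$ to lie in $X$, so the trapped component contradicts the hypothesis directly and no limiting argument is needed. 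This is a genuine streamlining. Two minor remarks: your ``easy case'' $V\cap C\neq\emptyset$ is vacuous, since you have already shown $V\subseteq\operatorname{int}_A K\subseteq K\subseteq Y$, so only the second case ever occurs; and the detour through the sublevel-set component $V$ is not strictly necessary (translating $K$ itself with $\mu$ replaced by $c'=\min_K d_Y$ works just as well, which is essentially what the paper does), though it is harmless and gives slightly cleaner boundary control via $d_Y\equiv c$ on $\partial_A V$.
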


\begin{proof}
	We may assume $Y\neq X$. We argue by contradiction, so assume that there is $x\in \R^d$ and  $K\subseteq Y\cap(x+F)$ compact such that
$$
		\min_{z\in K}d_Y(z)<\min_{z\in\partial_{x+F}K}d_Y(z).
$$
	Choose $z_0\in K$ and $y_0\in\R^d\backslash Y$ with $|z_0-y_0|=\min_{z\in K}d_Y(x)$.
Hence,
	\begin{equation}\label{eq: violation minimum principle 2}
		|z_0-y_0|<\inf\{|y-z| \,; \, y\in\R^d\backslash Y, z\in \partial_{x+F}K\}.
	\end{equation}
	This implies that $\partial_{y_0-z_0+x+F}(y_0-z_0+K) = y_0-z_0+\partial_{x+F}K \subseteq Y$. Since $y_0 \in y_0-z_0+x+F$, we obtain that the connected component $C_{y_0}$ of $(\R^d\backslash Y)\cap (y_0-z_0+x+F)$ containing $y_0$ satisfies $C_{y_0}\subseteq y_0-z_0+K$. In particular, $C_{y_0}$ is compact. If $X=\R^d$, this gives the desired contradiction. 	If $X\neq\R^d$, we conclude that $\emptyset\neq\partial X\cap C_{y_0}$. Set
	$$t_0=\inf\{t\in[0,\infty) \, ; \,t(y_0-z_0)+K\cap\partial X\neq \emptyset\}.$$
 We have that $t_0 > 0$ because $K \subseteq X$ is compact, while  $C_{y_0}\subseteq y_0-z_0+K$ and $\emptyset\neq\partial X\cap C_{y_0}$ imply that $t_0 \leq 1$. Note that $t(y_0-z_0)+K\subseteq X$ for all $t \in (0,t_0)$. Since $t_0(y_0-z_0)+K\cap\partial X\neq \emptyset$, it holds that
	\begin{equation}\label{eq: contradiction minimum principle 1}
	 d\left(t(y_0-z_0)+K, \R^d\backslash X\right)\leq (t_0-t)|y_0-z_0|, \qquad 	\forall\,t\in (0,t_0).
	\end{equation}
	Set 
	$$\delta=\inf\{|y-z|\, ; \, y\in\R^d\backslash Y, z\in \partial_{x+F}K\}-|z_0-y_0|,$$
	so that $\delta>0$ by \eqref{eq: violation minimum principle 2}. For all $y\in \R^d\backslash X\subseteq\R^d\backslash Y$, $z\in \partial_{x+F}K$ and $t\in (0,t_0)$ we have that
	$$|y-\left(t(y_0-z_0)+z\right)|\geq |y-z|-t|z_0-y_0|\geq|y-z|-|z_0-y_0|\geq\delta.$$
	Hence, for $t\in (0,t_0)$ it follows that
	$$\min_{z'\in\partial_{t(y_0-z_0)+x+F}(t(y_0-z_0)+K)}d_X(z')=\inf\{|y-\left(t(y_0-z_0)+z\right)\,; \, y\in\R^d\backslash X, z\in \partial_{x+F}K\}\geq \delta.$$
	By evaluating \eqref{eq: contradiction minimum principle 1} for $t\in(0,t_0)$ with $(t_0-t)|y_0-z_0|<\delta/2$, we obtain that
	$$\min_{z'\in t(y_0-z_0)+K}d_X(z')<\min_{z'\in\partial_{t(y_0-z_0)+x+F}(t(y_0-z_0)+K)}d_X(z').$$
	Since $t(y_0-z_0)+K \subseteq X \cap (t(y_0-z_0)+ x+ F)$, $d_X$ does not satisfy the minimum principle in $t(y_0-z_0)+ x+ F$, a contradiction.
\end{proof}

We are ready to prove the main result of this section,
its proof is inspired by the one of  \cite[Theorem 1]{Kalmes21}. Given $X \subseteq \R^d$, we denote by $\partial_\infty X$  the boundary of $X$ in the one-point compactification of $\R^d$. Thus, $\partial_\infty X=\partial X$ if $X$ is bounded, and $\partial_\infty X=\partial X\cup\{\infty\}$ otherwise. 
\begin{theorem}\label{theo: Runge for subspace elliptic}
	Let $P\in\C[\xi_1,\ldots,\xi_d]$ be a polynomial which acts along a subspace $W$ of $\R^d$ and is elliptic on $W$. Let $X,Y \subseteq\R^d$ be open with $Y \subseteq X$ such that $X$ is $P$-convex for supports. Suppose that there is no $x\in\R^d$ such that $X$ contains a compact connected component of $(\R^d\backslash Y)\cap (x+W)$. Then, $Y$ is $P$-convex for supports and both restriction maps
	$r_\mathscr{E}^P: \mathscr{E}_P(X)\rightarrow\mathscr{E}_P(Y)$ and $r_{\mathscr{D}'}^P:\mathscr{D}'_P(X)\rightarrow\mathscr{D}'_P(Y)$
	have dense range.
\end{theorem}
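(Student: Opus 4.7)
The plan is to split the argument into two parts: first, derive the $P$-convexity of $Y$ for supports purely from the geometric hypothesis on $Y$ inside $X$ via the minimum-principle characterization, and second, obtain the density of the two restriction maps from the subspace-elliptic regularity of $\check{P}(D)$ along the subspace $W$ combined with a unique-continuation argument.

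For the first part, I would invoke Theorem \ref{theo: cases of Omega}(a): $P$-convexity of $X$ for supports is equivalent to $d_X$ satisfying the minimum principle in every affine subspace $x+W$. I then apply Lemma \ref{lem: entailing minimum principle} with $F=W$, whose hypothesis is precisely the geometric assumption of the theorem, to transfer the minimum principle from $d_X$ to $d_Y$ in every $x+W$. A further application of Theorem \ref{theo: cases of Omega}(a), now to $Y$, then yields that $Y$ is $P$-convex for supports.

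For the density statement, by Proposition \ref{abstract-Runge} it suffices to check that every $\varphi\in\mathscr{D}(X)$ with $\supp\check{P}(D)\varphi\subseteq Y$ satisfies $\supp\varphi\subseteq Y$. I would argue by contradiction. Assume there is $x_0\in(\supp\varphi)\backslash Y$, and restrict all functions to the affine subspace $x_0+W$. Since $\check{P}$ also acts along $W$ and is elliptic there, $\check{P}(D)$ induces a constant-coefficient elliptic operator on $x_0+W$ whose action on $\varphi|_{x_0+W}$ is $(\check{P}(D)\varphi)|_{x_0+W}$; consequently $\varphi|_{x_0+W}$ is real-analytic on the open set $(x_0+W)\backslash\supp\check{P}(D)\varphi$. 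Letting $V$ be the connected component of this open set containing $x_0$, analytic continuation on $V$ excludes that $\varphi|_{x_0+W}$ vanishes on a nonempty open subset of $V$, and hence forces $V\subseteq\supp\varphi$. Letting $C$ be the connected component of $(x_0+W)\backslash Y$ through $x_0$, the inclusion $\supp\check{P}(D)\varphi\subseteq Y$ gives $C\subseteq V$, so $C$ is bounded; as a connected component of the closed set $(x_0+W)\backslash Y$ it is also closed in $x_0+W$, hence compact, and $C\subseteq V\subseteq\supp\varphi\subseteq X$. This produces a compact connected component of $(\R^d\backslash Y)\cap(x_0+W)$ lying inside $X$, contradicting the hypothesis on $Y$.

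The main obstacle I foresee is the reduction step on $x_0+W$: one must verify that the subspace-elliptic operator $\check{P}(D)$ really restricts to a genuine constant-coefficient elliptic operator in the intrinsic $\dim W$ coordinates of $x_0+W$, so that classical interior real-analyticity of solutions is at one's disposal. Once this is set up, the rest is a matter of tracking the nesting of connected components relative to $\supp\check{P}(D)\varphi$ and to $Y$, and combining the minimum-principle machinery of Theorem \ref{theo: cases of Omega}(a) with the geometric Lemma \ref{lem: entailing minimum principle} and the abstract duality in Proposition \ref{abstract-Runge}.
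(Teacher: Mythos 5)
The first half of your proposal (transferring the minimum principle from $d_X$ to $d_Y$ via Lemma \ref{lem: entailing minimum principle} and Theorem \ref{theo: cases of Omega}(a), and reducing the density statement to condition (iii) of Proposition \ref{abstract-Runge}) is exactly the paper's argument and is fine. The step you single out as the main obstacle is also not a problem: since $P(\xi)=P(\pi_W\xi)$, the operator $\check P(D)$ does induce a genuine constant-coefficient elliptic operator in intrinsic coordinates on each slice $x_0+W$, so interior real-analyticity of $\varphi|_{x_0+W}$ on $(x_0+W)\backslash\supp\check P(D)\varphi$ is available. The genuine gap is elsewhere, in the step ``analytic continuation \ldots forces $V\subseteq\supp\varphi$''. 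For that conclusion you need $\varphi|_{x_0+W}\not\equiv 0$ on $V$, and the hypothesis $x_0\in\supp\varphi$ does not supply this when $W\neq\R^d$: the support is an $\R^d$-notion and does not localize to lower-dimensional slices. For instance, with $d=2$, $W=\operatorname{span}\{e_1\}$ and $\varphi(x_1,x_2)=\chi(x_1)\psi(x_2)$, where $\psi=0$ for $x_2\leq 0$ and $\psi>0$ for small $x_2>0$, every point $(a,0)$ with $\chi(a)\neq0$ lies in $\supp\varphi$ although $\varphi$ vanishes identically on the slice $\{x_2=0\}$. For such $x_0$ no contradiction is produced and $x_0$ is not excluded from $\supp\varphi$.

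Even if one restricts attention to the points where the slice restriction is not identically zero, the argument only yields $\varphi(x_0)=0$ for every $x_0\in X\backslash Y$, i.e.\ $\{\varphi\neq0\}\subseteq Y$ and hence $\supp\varphi\subseteq\overline{Y}$; this is strictly weaker than the inclusion $\supp\varphi\subseteq Y$ required by Proposition \ref{abstract-Runge}(iii), since $\supp\varphi$ could still meet $\partial Y\cap X$. The paper circumvents both issues by propagating \emph{full-dimensional} vanishing: for each point $x\in Y$ with $d_Y(x)<d(K,\R^d\backslash Y)$, where $K=\supp\check P(D)\varphi$, it builds a piecewise affine curve in $(X\backslash K)\cap(x+W)$ reaching $\partial_\infty X$ (the construction of this curve uses the already established $P$-convexity of $Y$) and sweeps the zero set of $\varphi$ back along tubes around the curve using the uniqueness theorem \cite[Theorem 8.6.8]{HoermanderPDO1}, concluding that $\varphi$ vanishes on an open ball around $x$; the part of $\supp\varphi$ possibly remaining in $X\backslash Y$ is then eliminated by writing $\varphi=\varphi_1+\varphi_2$ with supports in two disjoint compacta and invoking the injectivity of $\check P(D)$ on $\mathscr{D}(\R^d)$. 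To repair your proof you would need to replace the slice-wise analytic continuation by some mechanism of this kind that propagates vanishing of $\varphi$ on open subsets of $\R^d$.
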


\begin{proof}
	Since $X$ is $P$-convex for supports, Theorem \ref{theo: cases of Omega}(a) implies that $d_X$ satisfies the minimum principle in $x+W$ for every $x\in\R^d$. Hence, by the hypothesis  on the geometry of $X$ and $Y$, and Lemma \ref{lem: entailing minimum principle}, $d_Y$ satisfies the minimum principle in $x+W$ for every $x\in\R^d$ as well. Another application of Theorem \ref{theo: cases of Omega}(a) yields that  $Y$ is $P$-convex for supports.

By Proposition \ref{abstract-Runge}, it suffices to show that for every $\varphi\in\mathscr{D}(X)$ with $\supp\check{P}(D)\varphi\subseteq Y$ it holds that $\supp\varphi\subseteq Y$. 
Set $K=\supp\check{P}(D)\varphi\subseteq Y$. We claim that 
	$$\left\{y\in Y\, ; \, \,d_Y(y)<d(K,\R^d\backslash Y)\right\} \cap \supp \varphi = \emptyset.$$
Once we have proved this, $ch(\supp\varphi)=ch(K)$ (theorem of supports) implies that
	$$\supp\varphi\subseteq\left[\{y\in Y\, ; \, \,d_Y(y)\geq d(K,\R^d\backslash Y)\}\cap ch(K)\right]\cup\left[(X\backslash Y)\cap\supp\varphi\right].$$
Set $K_1=\{y\in Y\, ; \, \,d_Y(y)\geq d(K,\R^d\backslash Y)\}\cap ch(K)$ and $K_2=(X\backslash Y)\cap\supp\varphi$. Then,  $\supp \varphi$ is contained in the union of the disjoint compact sets $K_1\subseteq Y$ and $K_2\subseteq X\backslash Y$. Hence, $\varphi=\varphi_1+\varphi_2$ with $\varphi_j\in\mathscr{D}(K_j), j=1,2$, and thus $\check{P}(D)\varphi_2= 0$. The injectivity of $\check{P}(D)$ on $\mathscr{D}(\R^d)$ yields that $\varphi=\varphi_1$, whence  $\supp\varphi\subseteq Y$.
	
We now show the claim. Let $x \in Y$ be such that  $d_Y(x)<d(K,\R^d\backslash Y)$. We need to show that $x \notin \supp \varphi$.  We claim that there exists a continuous, piecewise affine curve $\gamma:[0,\infty)\rightarrow \left(X\backslash K\right)\cap(x+W)$ with $\gamma(0)=x$ and  $\lim_{t\rightarrow\infty}d(\gamma(t),\partial_\infty X)=0$.  Let us assume, for the moment, that we have shown that  such a curve $\gamma$ exists. Since $\lim_{t\rightarrow\infty}d(\gamma(t),\partial_\infty X)=0$, there exists $T > 0$ such that $\gamma(T)\notin\supp\varphi$. Let $t_0=0<t_1<\ldots<t_n=T$ be a partition of $[0,T]$ such that $\gamma$ is affine in $[t_j,t_{j+1}]$ for every $j=0,\ldots,n-1$. Let $\varepsilon>0$ be such that $B(\gamma(T),\varepsilon) \cap \supp\varphi = \emptyset$ and $\left(\gamma([0,T])+B(0,\varepsilon)\right) \cap K= \emptyset$. Consider the open convex sets $X_2=\gamma([t_{n-1},T])+B(0,\varepsilon)$ and $X_1=B(\gamma(T),\varepsilon)\subseteq X_2$ ($X_2$ is convex because $\gamma $ is affine on $[t_{n-1},T]$). Then, $\varphi$ vanishes in $X_1$ and  $\check{P}(D)\varphi$ vanishes in $X_2$. Note that every characteristic hyperplane for $P(D)$ is of the form $\{y\in\R^d \, ; \, \langle N, y \rangle =c \}$, with $c \in\R$ and $N \in W^\perp \backslash \{0\}$. Hence, each characteristic hyperplane that intersects $X_2$ already intersects $X_1$. It follows from \cite[Theorem 8.6.8]{HoermanderPDO1} that $\varphi$ vanishes in $X_2$. Iteration of this argument yields that $\varphi$ vanishes in $\gamma([t_j,t_{j+1}])+B(0,\varepsilon)$ for every $j=0,\ldots, n-1$. In particular, $\varphi$ vanishes on $B(x, \varepsilon) = B(\gamma(t_0), \varepsilon)$  and thus $x \notin \supp \varphi$.
	
We now show the existence of a  curve $\gamma$ with the above properties. Let $C$ be the  connected component of $(X\backslash K)\cap (x+W)$ that contains $x$. Then, $C$ is pathwise connected. If $C$ is unbounded, there is a continuous, piecewise affine curve $\gamma:[0,\infty)\rightarrow C$ with $\gamma(0)=x$ and $\lim_{t\rightarrow\infty}|\gamma(t)|=\infty$. In particular, $\gamma$ has the desired properties. If $C$ is bounded, as $Y$ is $P$-convex for supports,  \cite[Theorem 10.8.5]{HoermanderPDO2} and \cite[Lemma 4]{Kalmes19} (see also the proof of \cite[Theorem 1]{Kalmes21}) imply that there exists a continuous, piecewise affine curve $\alpha:[0,\infty)\rightarrow(Y\backslash K)\cap(x+W)$ with $\alpha(0)=x$ and $\liminf_{t\rightarrow\infty}d(\alpha(t),\partial_\infty Y)=0$. Note that $\alpha([0,\infty))\subseteq C$. Since $C$ is bounded, we obtain that  $\liminf_{t\rightarrow\infty}d(\alpha(t),\partial Y)=0$. Hence, there are $\xi\in \partial Y \cap(x+W)$, $T>0$, and $\varepsilon>0$ such that $B(\xi,\varepsilon)\cap K=\emptyset$ and $\alpha(T)\in B(\xi,\varepsilon)$.  If $\xi\in\partial X$, we consider the  line  $\beta:[T,\infty)\rightarrow B(\xi, \varepsilon) \cap(x+W)$  with $\beta(T)=\alpha(T)$ and $\lim_{t\rightarrow\infty}\beta(t)=\xi$. The concatenation of the curves $\alpha_{|[0,T]}$ and $\beta$  defines a curve $\gamma$ with the desired properties. If $\xi\notin\partial X$, we may assume that $B(\xi,\varepsilon)\subseteq X$. Consider the line $\beta_1:[T,T+1]\rightarrow B(\xi, \varepsilon) \cap(x+W)$  with $\beta_1(T)=\alpha(T)$ and $\beta_1(T+1)=\xi$. Let $C_\xi$ and $D_\xi$ be the connected components of $(\R^d\backslash Y)\cap(x+W)$ and $(\R^d\backslash K)\cap(x+W)$, respectively, that contain $\xi$. Then, $C_\xi\subseteq D_\xi$ and $D_\xi$ is pathwise connected.  If $C_\xi$ is bounded,  by the hypothesis on the geometry of $X$ and $Y$, $C_\xi$ contains some $\eta\in\partial X$. Hence, there exists a continuous, piecewise affine curve $\beta_2:[T+1,\infty)\rightarrow D_\xi$ with $\beta_2(T+1)=\xi$ and $\lim_{t\rightarrow\infty}\beta_2(t)=\eta$. The concatenation of the curves $\alpha_{|[0,T]}$, $\beta_1$, and $\beta_2$ defines a curve $\gamma$ with the desired properties. If $C_\xi$ is unbounded,  there exists a continuous, piecewise affine curve $\beta_2:[T+1,\infty)\rightarrow D_\xi$ with $\beta_2(T+1)=\xi$ and $\lim_{t\rightarrow\infty}|\beta_2(t)|=\infty$. If the range of $\beta_2$ does not intersect $\partial X$, the concatenation of the curves $\alpha_{|[0,T]}$, $\beta_1$, and $\beta_2$ defines a curve $\gamma$ with the desired properties. If the range of $\beta_2$ intersects $\partial X$, we consider $S=\inf\{t\in[T+1,\infty)\,;\,\beta_2(t)\in\partial X\} < \infty$.  By a suitable reparametrization of $\beta_{2|[T+1,S)}$, we obtain a continuous, piecewise affine curve $\beta_3:[T+1,\infty)\rightarrow D_\xi$ with $\beta_3(T+1)=\xi$ and $\lim_{t\rightarrow\infty}d(\beta_3(t),\partial X)=0$. Again, the desired curve $\gamma$ is obtained by concatenating the curves $\alpha_{|[0,T]}$, $\beta_1$, and $\beta_3$.
\end{proof}

\begin{remark}\label{remark-LM}
	For $W=\R^d$ Theorem \ref{theo: Runge for subspace elliptic} is of course the Lax-Malgrange theorem for elliptic operators  \cite{Lax, Malgrange}. The condition on $X$ and $Y$ becomes that X contains no compact connected component of $\R^d\backslash Y$. In this case, the condition is also necessary for the restriction maps to have dense range.
\end{remark}

We now use Theorem \ref{theo: Runge for subspace elliptic} to give an approximation result for differential operators that factor into first order operators. We need the following result.

\begin{lemma}\label{prop: composition of differential operators}
	Let $P_1,P_2\in\C[\xi_1,\ldots,\xi_d]$ and set $P=P_1P_2$.
	\begin{itemize}
		\item[(a)] An open set $X\subseteq\R^d$ is $P$-convex for supports if and only if $X$ is $P_1$-convex for supports as well as $P_2$-convex for supports. An analogous statement holds for convexity for singular supports.
		\item[(b)] Assume that $Y \subseteq X\subseteq\R^d$ are open sets that are $P$-convex for supports. Then, $r_\mathscr{E}^P$ has dense range if and only if both $r_\mathscr{E}^{P_1}$ and $r_\mathscr{E}^{P_2}$ have dense range. An analogous statement holds for the distributional restriction maps.
	\end{itemize}
\end{lemma}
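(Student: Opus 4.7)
My plan is to exploit two elementary facts. First, from $P = P_1 P_2$ we have $\check{P}(D) = \check{P}_1(D)\check{P}_2(D) = \check{P}_2(D)\check{P}_1(D)$, and since differential operators are local,
\[
\supp \check{P}(D) u \subseteq \supp \check{P}_j(D) u \quad \text{and} \quad \singsupp \check{P}(D) u \subseteq \singsupp \check{P}_j(D) u
\]
for $j=1,2$ and every $u \in \mathscr{E}'(X)$. Second, I will invoke the support-based definition of $P$-convexity (for supports and for singular supports) and the criterion of Proposition \ref{abstract-Runge} that recasts density of the restriction map as a statement about supports of test functions.

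For part (a), I handle the case of convexity for supports; the singular support case follows verbatim with $\supp$ replaced by $\singsupp$. For the $(\Leftarrow)$ direction, I chain the two convexities: given $K \subseteq X$ compact, $P_2$-convexity produces $L_1 \subseteq X$ compact such that $\supp \check{P}_2(D) g \subseteq K$ forces $\supp g \subseteq L_1$, and then $P_1$-convexity applied to $L_1$ produces $L \subseteq X$ compact with the analogous property. For $f \in \mathscr{E}'(X)$ with $\supp \check{P}(D) f \subseteq K$, writing this as $\supp \check{P}_2(D)(\check{P}_1(D) f) \subseteq K$ and applying the two steps in sequence yields $\supp \check{P}_1(D) f \subseteq L_1$ and then $\supp f \subseteq L$. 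For the $(\Rightarrow)$ direction, a compact $L$ produced by $P$-convexity from a given $K$ already works for both $P_j$: the locality inclusion above ensures that $\supp \check{P}_j(D) f \subseteq K$ implies $\supp \check{P}(D) f \subseteq K$, hence $\supp f \subseteq L$.

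For part (b), I use Proposition \ref{abstract-Runge} to replace density of each restriction map by the support condition on test functions: every $\varphi \in \mathscr{D}(X)$ with $\supp \check{P}(D) \varphi \subseteq Y$ satisfies $\supp \varphi \subseteq Y$. Note that part (a) ensures $X$ and $Y$ are both $P_1$- and $P_2$-convex for supports, so Proposition \ref{abstract-Runge} applies to $P_1$ and $P_2$ as well. The $(\Rightarrow)$ direction is immediate from the locality inclusion: if $\supp \check{P}_j(D) \varphi \subseteq Y$, then $\supp \check{P}(D) \varphi \subseteq Y$, hence $\supp \varphi \subseteq Y$. For $(\Leftarrow)$, given $\varphi \in \mathscr{D}(X)$ with $\supp \check{P}(D) \varphi \subseteq Y$, I set $\psi := \check{P}_1(D) \varphi \in \mathscr{D}(X)$; since $\supp \check{P}_2(D) \psi = \supp \check{P}(D) \varphi \subseteq Y$, the criterion for $P_2$ gives $\supp \psi \subseteq Y$, and then the criterion for $P_1$ applied to $\varphi$ yields $\supp \varphi \subseteq Y$. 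The distributional version transfers word for word via the second equivalence in Proposition \ref{abstract-Runge}. No serious obstacle is expected; the only point requiring care is to invoke part (a) at the outset of part (b) so that Proposition \ref{abstract-Runge} is legitimately applicable to each of the factors $P_1$ and $P_2$.
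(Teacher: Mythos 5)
Your proof is correct and follows exactly the route the paper intends: the paper's own proof consists of the single line that (a) is obvious and (b) follows from Proposition \ref{abstract-Runge}, and your argument is precisely the standard filling-in of those details (chaining the two convexities via $\check{P}(D)=\check{P}_2(D)\check{P}_1(D)$ and locality for (a), and translating density into criterion (iii) of Proposition \ref{abstract-Runge} for (b)). No gaps.
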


\begin{proof}
	(a) is  obvious, while (b) follows from Proposition \ref{abstract-Runge}.
\end{proof}

\begin{theorem}\label{theo: Runge for affine factors}
 Let $P \in\C[\xi_1,\ldots,\xi_d]$  be such that $P(\xi)=\alpha\prod_{j=1}^l\left(\langle N_j,\xi\rangle+c_j\right)$, where $\alpha\in\C\backslash\{0\}$, $N_j\in \C^d\backslash\{0\}$, $c_j\in\C$, $j=1,\ldots,l$. 
 Let $X,Y \subseteq\R^d$ be open with $Y \subseteq X$ such that $X$ is $P$-convex for supports. Consider the following statements:
	\begin{itemize}
			\item[(i)] $Y$ is $P$-convex for supports and the restriction map $r_{\mathscr{E}}^P:\mathscr{E}_P(X)\rightarrow\mathscr{E}_P(Y)$ has dense range.
		\item[(ii)] $Y$ is $P$-convex for supports and the restriction map $r_{\mathscr{D}'}^P:\mathscr{D}'_P(X)\rightarrow\mathscr{D}'_P(Y)$ has dense range.
		\item[(iii)] There is no $x\in\R^d$ such that $X$ contains a compact connected component of any of the sets $(\R^d\backslash Y)\cap\left(x+\mbox{span}\{\mbox{Re}N_j, \mbox{Im}N_j\}\right)$, $j=1,\ldots, l$.
	\end{itemize}
	Then, $(iii) \Rightarrow(i) \Leftrightarrow (ii)$. If $d=2$, the three statements are equivalent.
\end{theorem}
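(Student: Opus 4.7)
My plan is organised around the factorisation $P = \alpha \prod_{j=1}^l P_j$ with $P_j(\xi) = \langle N_j, \xi\rangle + c_j$. Each $P_j$ acts along the real subspace $W_j := \operatorname{span}\{\operatorname{Re}N_j, \operatorname{Im}N_j\}$ and is elliptic on it: writing $\langle N_j, \xi\rangle = \langle \operatorname{Re}N_j, \xi\rangle + i\langle \operatorname{Im}N_j, \xi\rangle$ gives $P_j(\xi) = P_j(\pi_{W_j}\xi)$, while $\langle N_j,\xi\rangle \neq 0$ for $\xi \in W_j\setminus\{0\}$ follows trivially when $\dim W_j = 1$ and via strict positivity of the Gram determinant of $\operatorname{Re}N_j, \operatorname{Im}N_j$ when $\dim W_j = 2$. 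In particular each $P_j$ is subspace elliptic in the sense of Theorem \ref{theo: Runge for subspace elliptic}. The equivalence $(i)\Leftrightarrow(ii)$ is immediate from Proposition \ref{abstract-Runge}, since both statements include $P$-convexity of $Y$ for supports and are therefore equivalent to the common test-function condition of that proposition.

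For $(iii)\Rightarrow(i)$, Lemma \ref{prop: composition of differential operators}(a) first gives that $X$ is $P_j$-convex for supports for every $j$. Condition (iii), restricted to a single $j$, is exactly the geometric hypothesis of Theorem \ref{theo: Runge for subspace elliptic} applied with $W = W_j$, and that theorem furnishes both $P_j$-convexity of $Y$ and dense range of $r^{P_j}_{\mathscr{E}}$ for each $j$. A second application of Lemma \ref{prop: composition of differential operators} (parts (a) and (b)) combines these assertions over $j$ to yield (i).

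For the case $d=2$, I would prove $(ii)\Rightarrow(iii)$ by establishing the converse of Theorem \ref{theo: Runge for subspace elliptic} for each factor $P_j$. Lemma \ref{prop: composition of differential operators} reduces matters to showing, for each $P_j$, that $P_j$-convexity of $Y$ together with dense range of $r^{P_j}_{\mathscr{D}'}$ forces $X$ to contain no compact connected component $C$ of $(\R^2\setminus Y)\cap(x+W_j)$. If $\dim W_j = 2$, then $P_j$ is elliptic and this is the classical converse to the Lax-Malgrange theorem, proved by placing the pole of a fundamental solution of $P_j$ inside such a compact component and invoking unique continuation. If $\dim W_j = 1$, after multiplying $P_j$ by a complex scalar and performing an orthogonal change of coordinates we may assume $P_j(D) = -i\partial_{x_1}+c$, whose solutions depend only on $x_2$ up to an exponential factor in $x_1$; I would then combine the minimum-principle characterisation of $P_j$-convexity from Theorem \ref{theo: cases of Omega}(a) with dense range to argue that a hypothetical compact component $C$ on a line $\ell$ parallel to $W_j$ and contained in $X$ either induces an interior minimum of $d_Y$ on a line parallel to $\ell$ just above or below it (contradicting $P_j$-convexity of $Y$), or else disconnects enough slices of $Y$ to produce elements of $\mathscr{D}'_{P_j}(Y)$ taking independent values on the two sides of $C$ in the manner of the $\Theta(-x_1)\delta_0(x_2)$-type distributions (contradicting dense range).

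The main obstacle will be this last step in the one-dimensional subspace case, namely excluding the delicate $d=2$ configurations in which $C$ is isolated along $\ell$ without disconnecting any slice of $Y$ and without inducing an interior minimum of $d_Y$ on a nearby parallel line. I anticipate a careful geometric analysis tracking the connected component of $\R^2\setminus Y$ containing $C$: the minimum-principle constraint on all lines parallel to $W_j$ should force this component to extend perpendicularly to $W_j$ in both directions away from $C$, and one uses its boundedness (from $C$ being compact) together with dense range to reduce every such configuration to one of the two contradictions identified above.
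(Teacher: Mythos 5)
Your treatment of $(i)\Leftrightarrow(ii)$ and of $(iii)\Rightarrow(i)$ coincides with the paper's: factor $P=\alpha\prod_{j}P_j$, verify that each $P_j$ acts along $W_j=\operatorname{span}\{\operatorname{Re}N_j,\operatorname{Im}N_j\}$ and is elliptic there, and combine Proposition \ref{abstract-Runge}, Theorem \ref{theo: Runge for subspace elliptic}, and Lemma \ref{prop: composition of differential operators}. For $d=2$ and $\dim W_j=2$ your appeal to the converse of the Lax--Malgrange theorem is also exactly what the paper does (cf.\ Remark \ref{remark-LM}).

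The gap is in $(ii)\Rightarrow(iii)$ for $d=2$ when $\dim W_j=1$. You propose to rule out a compact connected component $C$ of $(\R^2\backslash Y)\cap(x+W_j)$ contained in $X$ by a dichotomy: either $C$ forces an interior minimum of $d_Y$ on a nearby line parallel to $W_j$ (contradicting $P_j$-convexity of $Y$ via Theorem \ref{theo: cases of Omega}), or it disconnects slices of $Y$ so that a jump-type element of $\mathscr{D}'_{P_j}(Y)$ cannot be approximated. You flag yourself that this dichotomy is not exhaustive, and indeed it is not: $C$ can be isolated on its line while all nearby parallel lines meet $\R^2\backslash Y$ only in unbounded pieces and no slice of $Y$ is disconnected, so neither horn applies without substantial additional geometric work. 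This missing step is precisely the content of the external result the paper invokes here, namely \cite[Theorem 2]{Kalmes21}, which gives the necessity of the no-compact-component condition along characteristic lines for operators whose principal part vanishes on a one-dimensional subspace (note that after your normalization $x+W_j$ is a characteristic line for $P_j(D)$, so that theorem applies directly once Lemma \ref{prop: composition of differential operators}(b) has reduced matters to the single factor $P_j$). As written, your proof of the $d=2$ equivalence is therefore incomplete; it closes either by citing that theorem or by actually carrying out the case analysis you only anticipate.
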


\begin{proof} Set $P_j(\xi)=\langle N_j,\xi\rangle+c_j$ and $W_j=\mbox{span}\{\mbox{Re}N_j, \mbox{Im}N_j\}$, $j=1,\ldots, l$. Note that $P_j(D)$ acts along $W_j$ and is elliptic on $W_j$ for all  $j=1,\ldots, l$. The equivalence $(i) \Leftrightarrow (ii)$ follows  from  Proposition \ref{abstract-Runge}, while  the implication $(iii) \Rightarrow (i)$ is a consequence of Theorem \ref{theo: Runge for subspace elliptic} and Lemma \ref{prop: composition of differential operators}. Now assume that $d=2$. We will show that (ii) implies (iii). By Lemma \ref{prop: composition of differential operators}(b), the restriction map $r_{\mathscr{D}'}^{P_j}$ has dense range for all $j=1,\ldots,l$. If $\mbox{span}\{\mbox{Re}N_j, \mbox{Im}N_j\}=\R^2$, $P_j(D)$ is elliptic and the Lax-Malgrange theorem (see also Remark \ref{remark-LM}) implies that there is no $x\in\R^2$ such that $X$ contains a compact connected component of
	$$
	 \R^2\backslash Y = \left(\R^2\backslash Y\right)\cap\left(x+\mbox{span}\{\mbox{Re}N_j, \mbox{Im}N_j\}\right).
	$$ 
	If $\mbox{span}\{\mbox{Re}N_j, \mbox{Im}N_j\}\neq\R^2$, by making a change of variables, we may assume that $\mbox{span}\{\mbox{Re}N_j, \mbox{Im}N_j\}=\mbox{span}\{e_2\}$ with $e_2=(0,1)\in\R^2$. Then, $P_j(\xi)=\beta+\gamma\xi_2$ for suitable $\beta,\gamma\in\C$, $\gamma\neq 0$, and thus $\operatorname{span}\{e_2\}$ is  a characteristic line for $P_j(D)$ but $\operatorname{span}\{e_1\}$ is not.  Since $r_{\mathscr{D}'}^{P_j}$ has dense range, \cite[Theorem 2]{Kalmes21} implies that there is no $x\in\R^2$ such that $X$ contains a compact connected component of
	$$\left(\R^2\backslash Y\right)\cap\left(x+\operatorname{span}\{e_2\}\right).$$
\end{proof}

Note that Theorem \ref{theo: Runge for affine factors} is applicable to homogeneous polynomials in two variables, in particular to the  wave operator in one spatial variable. In the next result we further study  the Runge approximation property for zero solutions of  this operator.

\begin{theorem}\label{theo: Runge for wave operator}
	Let $P(D)=\frac{\partial^2}{\partial x_1^2}-\frac{\partial^2}{\partial x_2^2}$ be the wave operator  in one spatial variable and let $Y\subseteq\R^2$ be open and $P$-convex for supports. The following statements are equivalent:
		\begin{itemize} 
			\item[(i)] $r_{\mathscr{E}}^P:\mathscr{E}_P(\R^2)\rightarrow\mathscr{E}_P(Y)$ has dense range.
			\item[(ii)] There is an open connected set $X \subseteq \R^2$ with $Y \subseteq X$  such that $X$ is $P$-convex for supports and the restriction map $r_{\mathscr{E}}^P:\mathscr{E}_P(X)\rightarrow\mathscr{E}_P(Y)$ has dense range.
			\item[(iii)] For all   $r\in\N_0\cup\{\infty\}$ the following extension property holds:  For each $f \in C^r(Y)\cap\mathscr{D}'_P(Y)$  and $K \subseteq Y$ compact  there is  $g\in C^r(\R^2)\cap\mathscr{D}'_P(\R^2)$ with compact support such that  $g=f$ on a neighborhood of $K$.
		\end{itemize}
\end{theorem}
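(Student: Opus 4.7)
The plan is to establish the cycle $(i) \Rightarrow (ii) \Rightarrow (iii) \Rightarrow (i)$. The implication $(i) \Rightarrow (ii)$ is trivial: $X = \R^2$ is convex, connected, and hence $P$-convex for supports. For $(iii) \Rightarrow (i)$, apply the extension property with $r = \infty$: given $f \in \mathscr{E}_P(Y) = C^\infty(Y) \cap \mathscr{D}'_P(Y)$ and any compact $K \subseteq Y$, the resulting $g \in \mathscr{E}_P(\R^2)$ agrees with $f$ on an open neighborhood of $K$, so $\|f - g\|_{K,k} = 0$ for every $k \in \N_0$. This immediately gives density of $r_{\mathscr{E}}^P: \mathscr{E}_P(\R^2) \to \mathscr{E}_P(Y)$.

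The substantive direction is $(ii) \Rightarrow (iii)$. The approach rests on the factorization $P = P_1 P_2$, with $P_1 = \partial_{x_1} - \partial_{x_2}$ and $P_2 = \partial_{x_1} + \partial_{x_2}$, and on d'Alembert's representation of zero-solutions as $f = a(x_1+x_2) + b(x_1-x_2)$. The plan proceeds in three stages. First, use Lemma \ref{prop: composition of differential operators} to reduce the density of $r_{\mathscr{E}}^P$ from $X$ to $Y$ to the density of $r_{\mathscr{E}}^{P_j}$ for each $j$. Second, invoke Theorem \ref{theo: Runge for affine factors} together with the hypothesis that $X$ is connected and $P$-convex to translate the density for each $P_j$ into the geometric statement that no compact connected component of $(\R^2 \setminus Y) \cap (x + W_j)$ is contained in $X$, where $W_1 = \operatorname{span}\{(1, -1)\}$ and $W_2 = \operatorname{span}\{(1, 1)\}$. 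Third, given $f \in C^r(Y) \cap \mathscr{D}'_P(Y)$ and compact $K \subseteq Y$, decompose $f$ via the d'Alembert representation, extend the one-dimensional profiles $a$ and $b$ using 1D cutoffs and the geometric freedom provided by the Runge condition, and reassemble to obtain $g$.

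The main obstacle is the third stage: one must simultaneously arrange that $P(D)g = 0$ on $\R^2$, that $g = f$ on a neighborhood of $K$, and that $g$ has the required support property. The geometric condition on compact connected components of $(\R^2 \setminus Y) \cap (x + W_j)$ is precisely what provides the room needed to modify the profiles $a$ and $b$ outside the characteristic projections of $K$. The curve-construction technique used in the proof of Theorem \ref{theo: Runge for subspace elliptic} — propagating the support of a cutoff along characteristic lines out to $\partial X$ — serves as the template, but must be carried out simultaneously for both characteristic families. Handling the interplay between the two families, especially at points where the profiles $a$ and $b$ are simultaneously constrained, is the most delicate aspect of the argument, and the connectedness of $X$ assumed in (ii) is likely decisive here.
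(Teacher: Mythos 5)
Your outline of $(i)\Rightarrow(ii)$ and $(iii)\Rightarrow(i)$ matches the paper, and your first two stages of $(ii)\Rightarrow(iii)$ (pass to null coordinates / the factorization, invoke Theorem \ref{theo: Runge for affine factors} to extract the geometric condition on compact components of $(\R^2\setminus Y)\cap(x+W_j)$) are exactly what the paper does. The gap is the third stage, which you explicitly leave open, and the template you propose for it points in the wrong direction. There is no ``interplay between the two characteristic families'' to manage: in null coordinates $(y_1,y_2)$ the operator becomes $Q=\partial^2/\partial y_1\partial y_2$, and \emph{any} function of the form $g(y_1,y_2)=\varphi_1(y_1)g_1(y_1)+\varphi_2(y_2)g_2(y_2)$ with compactly supported cutoffs $\varphi_1,\varphi_2$ lies in $\ker Q(D)$ automatically. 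No propagation of supports along characteristics in the style of Theorem \ref{theo: Runge for subspace elliptic} is required once the decomposition is in hand.

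The two points that actually carry the weight, and that your sketch does not address, are the following. First, the d'Alembert decomposition $f=g_{1,C}(y_1)+g_{2,C}(y_2)$ must be established \emph{globally on each connected component} $C$ of $Y$ (in null coordinates) with profiles of the correct regularity class $C^r$, including $r=0$; a priori it only holds on rectangles, and gluing the local decompositions requires passing to distributional derivatives of the profiles, taking primitives on the interval $\pi_k(C)$, and using pathwise connectedness of $C$ (this is the paper's Auxiliary Claim 2). Second, since $Y$ need not be connected, the extension is defined piecewise on the rectangles $R(C)=\pi_1(C)\times\pi_2(C)$, and one must show these rectangles are pairwise disjoint for distinct components; this is precisely where the connectedness and $P$-convexity of $X$ enter, via H\"ormander's result that the intersection of a connected $P$-convex open set with a characteristic hyperplane is connected \cite[Theorem 10.8.3]{HoermanderPDO2} (the paper's Auxiliary Claim 1). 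Your guess that connectedness of $X$ is ``decisive'' is right, but it is decisive for this disjointness, not for any coupling between the two profile families. As written, the proposal identifies the correct skeleton but does not constitute a proof of the only nontrivial implication.
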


\begin{proof}
The implications $(i) \Rightarrow (ii)$ and $(iii) \Rightarrow (i)$ are obvious. We now show $(ii) \Rightarrow (iii)$. By making the change of variables $(y_1,y_2) = (\xi_2-\xi_1,\xi_1+\xi_2)$, it suffices to show the following statement for the operator $Q(D_y)=\frac{\partial^2}{\partial y_1\partial y_2}$ and an open set $\tilde{Y}\subseteq\R^2$ that is $Q$-convex for supports: 

\emph{Let $\tilde{X} \subseteq \R^2$ be open and connected with $\tilde Y \subseteq \tilde X$  such that $\tilde X$ is $Q$-convex for supports and   the restriction map $r_{\mathscr{E}}^Q:\mathscr{E}_Q(\tilde X)\rightarrow\mathscr{E}_Q(\tilde Y)$ has dense range. Then, for all   $r\in\N_0\cup\{\infty\}$, $f \in C^r(\tilde{Y})\cap\mathscr{D}'_Q(\tilde Y)$,  and $K \subseteq \tilde Y$ compact  there is  $g\in C^r(\R^2)\cap\mathscr{D}'_Q(\R^2)$ with compact support such that $g=f$ on a neighborhood of $K$}.  

By Theorem \ref{theo: Runge for affine factors},  there is no $y\in\R$ such that $\tilde{X}$ contains a compact connected component of one of the sets  $(\R^2\backslash \tilde{Y})\cap\{(s,y) \,; \, s\in\R\}$ or $(\R^2\backslash \tilde{Y})\cap\{(y,s)\, ; \, s\in\R\}$. Let $\pi_k:\R^2\rightarrow\R, (y_1,y_2)\mapsto y_k$, $k=1,2$. For a connected component $C$ of $\tilde{Y}$ we set $R(C)=\pi_1(C)\times\pi_2(C)$.
	
	\textsc{Auxiliary Claim 1:} \emph{If $C_1$ and $C_2$ are disjoint connected components of $\tilde{Y}$, then $R(C_1)$ and $R(C_2)$ are disjoint as well.}
	
	\emph{Proof of auxiliary claim 1}. Suppose that $(y_1,y_2)\in R(C_1)\cap R(C_2)$. We find $z_k\in\R$ such that $(z_k,y_2)\in C_k$, $k = 1,2$. Since $C_1$ and $C_2$ are disjoint connected components of $\tilde Y$, there is $\lambda\in (0,1)$ such that $(\lambda z_1+(1-\lambda)z_2,y_2)\in\R^2\backslash \tilde{Y}$. The geometric relation of $\tilde{X}$ and $\tilde{Y}$ implies that there is $\mu\in (0,1)$ such that $(\mu z_1+(1-\mu)z_2,y_2)\notin\tilde{X}$. Hence, the intersection of $\tilde{X}$ with the line $\{(s,y_2)\, ; \,s\in\R\}$ has at least two non-empty connected components (as it contains $(z_k,y_2)$, $k=1,2$). As this line is characteristic for $Q$ and $\tilde{X}$ is connected and $Q$-convex for supports, the desired contradiction follows from \cite[Theorem 10.8.3]{HoermanderPDO2}. 
	
	Let   $r\in\N_0\cup\{\infty\}$, $f \in C^r(\tilde{Y})\cap\mathscr{D}'_Q(\tilde Y)$,  and $K \subseteq \tilde Y$ be arbitrary.
	
	\textsc{Auxiliary Claim 2:} \emph{For each connected component $C$ of $\tilde Y$ there are  $g_{k,C}\in C^r(\pi_k(C))$, $k=1,2$, with $f(y_1,y_2)=g_{1,C}(y_1)+g_{2,C}(y_2)$, $(y_1,y_2)\in C$.}
		
	\emph{Proof of auxiliary claim 2}. It is clear that for any open rectangle $R\subseteq C$ there are $g_{k,R}\in C^r(\pi_k(R))$, $k=1,2$, with $f(y_1,y_2)=g_{1,R}(y_1)+g_{2,R}(y_2)$, $(y_1,y_2)\in R$.  For two such rectangles $R_1$ and $R_2$ that are overlapping, there is $\alpha\in\C$ such that
	$$
	g_{1,R_1}(y_1)=g_{1,R_2}(y_1)+\alpha, \qquad g_{2,R_1}(y_2)=g_{2,R_2}(y_2)-\alpha,
	$$
	for all $y_1\in\pi_1(R_1\cap R_2)$, $y_2\in\pi_2(R_1\cap R_2)$. This implies that $g'_{k,R_1}=g'_{k,R_2}$ on $\pi_k(R_1\cap R_2)$, $k=1,2$. It follows that there are $h_k\in\mathscr{D}'(\pi_k(C))$, $k=1,2$, that satisfy $h_{k|\pi_k(R)}=g'_{k,R}$ on every open rectangle $R\subseteq C$. Since $\pi_j(C)$ is an interval, there are $\tilde{g}_k\in C^r(\pi_k(C))$, $k=1,2$, with $h_k=\tilde{g}'_k$ (for $r \geq 1$ this is clear, while the case $r =0$  follows from   \cite[Exercise 3.1.1 and Corollary 3.1.5]{HoermanderPDO1}). Hence,
	$$\tilde{g}:\pi_1(C)\times\pi_2(C)\rightarrow\C, (y_1,y_2)\mapsto\tilde{g}_1(y_1)+\tilde{g}_2(y_2)$$
	satisfies $\frac{\partial \tilde{g}}{\partial y_k}= \frac{\partial f}{\partial y_k}$, $k = 1,2$ on $C$. Now, if $\gamma:(0,1)\rightarrow C$ is an arbitrary $C^1$-curve with $\overline{\gamma(0,1)}\subseteq C$, we have that   $((f-\tilde{g})\circ\gamma)' = 0$ in  $\mathscr{D}'(0,1)$   (for $r \geq 1$ this is clear, while the case $r =0$ requires a standard mollification argument). By \cite[Theorem 3.1.4]{HoermanderPDO1}, we obtain that  $f-\tilde{g}$  is constant along $C^1$-curves $\gamma:(0,1)\rightarrow C$ with $\overline{\gamma(0,1)}\subseteq C$. Since $C$ is pathwise connected (as $\tilde{Y}$ is open), we conclude that $f-\tilde{g}$ is constant on $C$. This implies the claim.

	We are ready to show the result.  Set $\mathscr{C}=\{C \,; \,C\mbox{ connected component of }\tilde{Y}\}$. For each $C\in\mathscr{C}$ we choose $\varphi_{k,C}\in\mathscr{D}(\R)$ with $\supp\varphi_{k,C}\subseteq\pi_k(C)$ and $\varphi_{k,C}=1$ on a neighborhood of $\pi_k(K\cap C)$, $k=1,2$. Define

	$$g:\R^2\rightarrow\C, \, g(y_1,y_2)=\begin{cases}
		\varphi_{1,C}(y_1)g_{1,C}(y_1)+\varphi_{2,C}(y_2)g_{2,C}(y_2),&\mbox{if }(y_1,y_2)\in R(C)\\
		0,&\mbox{if }(y_1,y_2)\notin \bigcup_{C\in\mathscr{C}}R(C),
	\end{cases}$$
	where $g_{k,C}$, $k=1,2$, are the functions from the auxiliary claim 2. The function $g$ is well-defined by the auxiliary claim 1. Furthermore, it is clear that it satisfies all requirements.
	\end{proof}

	\begin{remark} Let $P(D)=\frac{\partial^2}{\partial x_1^2}-\frac{\partial^2}{\partial x_2^2}$. Theorem \ref{theo: Runge for wave operator} implies that the restriction map $r_{\mathscr{E}}^P:\mathscr{E}_P(\R^2)\rightarrow\mathscr{E}_P(Y)$ has dense range for every connected open subset $Y$ of $\R^2$ that is $P$-convex for supports (choose $X = Y$ in condition (ii)).
\end{remark}

\begin{remark}\label{rem: Runge for one dimensional wave operator}  We use the same notation as in the proof of Theorem \ref{theo: Runge for wave operator} and consider the case $r=\infty$. For $\tau > 0$ we set $f_\tau(y_1,y_2)=e^{i\tau y_1}\in\mathscr{E}_Q(\tilde{Y})$. For $C\in\mathscr{C}$  the  components of the decomposition of $f_\tau$ in the auxiliary claim 2  are given by  $f_{1,C}(y_1)=e^{i\tau y_1}$ and $f_{2,C}(y_2)=0$ . Hence, for the corresponding extension  $g_\tau$ it holds that for all $r \in \N_0$ and $L \subseteq \R^2$ compact with $K \subseteq L$ there is $C > 0$ such that
$$
\tau^r \leq \| g_\tau \|_{L,r} \leq C \tau^r, \qquad \forall \tau > 0.
$$
This shows that  the construction used in the proof of Theorem \ref{theo: Runge for wave operator} cannot be employed to show quantitative Runge type approximation results for the  wave operator in one spatial variable (such as Theorem \ref{theo: quantitative Runge for wave operator}).
	\end{remark}
Finally, we give a slight improvement of \cite[Theorem 1]{Kalmes21} for semi-elliptic operators.

\begin{theorem}\label{theo: qualitative Runge for single characteristic direction}
	Let $P\in\C[\xi_1,\ldots,\xi_d]$ be semi-elliptic  with principal part $P_m$ such that $\{P_m=0\}$ is one-dimensional. Let $X,Y\subseteq\R^d$ be open with $Y \subseteq X$ such that $X$ is  $P$-convex for supports. Suppose that there is no $x\in\R^d$ such that $X$ contains a compact connected component of $(\R^d\backslash Y)\cap(x+\{P_m=0\}^\perp)$. Then, $Y$ is $P$-convex for supports and the restriction map $r_\mathscr{E}^P: \mathscr{E}_P(X)\rightarrow\mathscr{E}_P(Y)$ has dense range.
\end{theorem}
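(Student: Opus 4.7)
The strategy is to adapt the proof of Theorem \ref{theo: Runge for subspace elliptic}, with the subspace $W$ replaced by $F := \{P_m=0\}^\perp$. The crucial structural observation is that under the semi-elliptic hypothesis with $\{P_m=0\}$ one dimensional, every $N$ with $P_m(N)=0$ is a scalar multiple of a fixed generator, so every characteristic hyperplane for $P(D)$ is a translate of $F$. This structural parallel with the subspace elliptic setting is what allows the argument of Theorem \ref{theo: Runge for subspace elliptic} to be recycled almost verbatim.

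First, I would establish that $Y$ is $P$-convex for supports. Since $X$ is $P$-convex for supports, Theorem \ref{theo: cases of Omega}(b) yields that $d_X$ satisfies the minimum principle in every characteristic hyperplane, i.e.\ in every affine translate $x+F$. Lemma \ref{lem: entailing minimum principle} applied with this $F$ together with the geometric hypothesis on $X$ and $Y$ then transfers the minimum principle to $d_Y$, and a second application of Theorem \ref{theo: cases of Omega}(b) gives that $Y$ is $P$-convex for supports.

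Next, by Proposition \ref{abstract-Runge} it suffices to show that every $\varphi\in\mathscr{D}(X)$ with $K:=\supp\check{P}(D)\varphi\subseteq Y$ satisfies $\supp\varphi\subseteq Y$. As in the proof of Theorem \ref{theo: Runge for subspace elliptic}, the heart of the matter is the claim $\{y\in Y\,;\,d_Y(y)<d(K,\R^d\setminus Y)\}\cap\supp\varphi=\emptyset$; granted this, the theorem of supports combined with the injectivity of $\check{P}(D)$ on $\mathscr{D}(\R^d)$ forces $\supp\varphi\subseteq Y$ via the same splitting $\varphi=\varphi_1+\varphi_2$ argument. To establish the core claim, fix $x\in Y$ with $d_Y(x)<d(K,\R^d\setminus Y)$ and construct a continuous piecewise affine curve $\gamma:[0,\infty)\to(X\setminus K)\cap(x+F)$ with $\gamma(0)=x$ and $\lim_{t\to\infty}d(\gamma(t),\partial_\infty X)=0$; the construction proceeds exactly as in Theorem \ref{theo: Runge for subspace elliptic}, splitting according to whether the connected component of $(X\setminus K)\cap(x+F)$ containing $x$ is bounded or unbounded and appealing to \cite[Theorem 10.8.5]{HoermanderPDO2} and \cite[Lemma 4]{Kalmes19} in the bounded case. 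Then one propagates the vanishing of $\varphi$ along $\gamma$ by means of \cite[Theorem 8.6.8]{HoermanderPDO1}: on each affine piece one compares $X_1=B(\gamma(T),\varepsilon)$ with $X_2=\gamma([t_{n-1},T])+B(0,\varepsilon)$, and since any characteristic hyperplane has normal $N\in\{P_m=0\}$ while the differences $\gamma(T)-\gamma(s)$ lie in $F\subseteq N^\perp$, every characteristic hyperplane intersecting $X_2$ already intersects $X_1$.

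The only place where the specific semi-elliptic hypothesis genuinely enters is the identification of all characteristic hyperplanes as translates of the single $(d-1)$-dimensional subspace $F$, which is precisely what makes the propagation step via \cite[Theorem 8.6.8]{HoermanderPDO1} applicable. Beyond this translation I expect no essential new difficulty, since the geometric constructions used in Theorem \ref{theo: Runge for subspace elliptic} depend only on $P$-convexity for supports of the ambient set and on the common normal direction of the characteristic hyperplanes, both of which are now at hand.
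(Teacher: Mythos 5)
Your argument is correct, and its first half coincides exactly with the paper's proof: the paper also deduces the $P$-convexity of $Y$ by combining Theorem \ref{theo: cases of Omega}(b) with Lemma \ref{lem: entailing minimum principle} applied to the characteristic hyperplanes $x+\{P_m=0\}^\perp$, and then applying Theorem \ref{theo: cases of Omega}(b) once more. The difference lies in the density statement: the paper simply observes that, once $Y$ is known to be $P$-convex for supports, the dense range of $r^P_{\mathscr{E}}$ is exactly the content of \cite[Theorem 1]{Kalmes21} (the theorem is presented as a ``slight improvement'' of that result precisely because $P$-convexity of $Y$ is derived rather than assumed), whereas you re-derive it by transplanting the proof of Theorem \ref{theo: Runge for subspace elliptic} with $W$ replaced by $F=\{P_m=0\}^\perp$. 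Your transplantation is sound: since $\{P_m=0\}$ is a one-dimensional subspace, all characteristic normals are parallel and all characteristic hyperplanes are translates of $F$, so the propagation step via \cite[Theorem 8.6.8]{HoermanderPDO1} along curves in $x+F$ goes through, and the curve construction only uses the minimum principle for $d_Y$ in the translates of $F$ (available from Theorem \ref{theo: cases of Omega}(b)) together with the geometric hypothesis on $X$ and $Y$ --- which is indeed how \cite[Theorem 1]{Kalmes21} is proved, as the paper itself notes that Theorem \ref{theo: Runge for subspace elliptic} is modelled on it. What your route buys is self-containedness; what it costs is length, since you are essentially unfolding a citation the paper is content to invoke.
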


\begin{proof}
	By  \cite[Theorem 1]{Kalmes21}, it suffices to show that  $Y$ is $P$-convex for supports. Since $X$ is $P$-convex for supports,  Theorem \ref{theo: cases of Omega}(b) implies that $d_X$ satisfies the minimum principle in every characteristic hyperplane, that is, in $x+\{P_m=0\}^\perp$ for every $x\in\R^d$. Lemma \ref{lem: entailing minimum principle} yields that $d_Y$ satisfies the minimum principle in every characteristic hyperplane as well. Hence, another application of Theorem \ref{theo: cases of Omega}(b) shows that  $Y$ is $P$-convex for supports. 
\end{proof}

\section{An auxiliary quantitative approximation result}\label{sec: technical}

In this section, we show an internal quantitative approximation result for  $\mathscr{E}_P(X)$. Namely, we show condition $(\Omega)$  for $\mathscr{E}_P(X)$ (see \eqref{def-Omega}) with control on the compact $L= L(K,k)$ and the number $l = l(K,k)$ occurring in it.  We will combine this result with the qualitative Runge type approximation theorems from Section \ref{sec: qualitative Runge} to obtain quantitative approximation theorems in the next section (cf.\ the explanation at the  end of Section \ref{sec: prelim}).

We start with a definition that will play a central role in the rest of our work.

\begin{definition}\label{def: augmenedtly locating}
	Let $P\in\C[\xi_1,\ldots,\xi_d]$, let $X\subseteq\R^d$ be open and let $K \subseteq L \subseteq X$ be compact sets. We say that the pair $(K,L)$ is \emph{augmentedly $P$-locating for $X$} if for every interval $[a,b]\subseteq\R$ it holds that
	\begin{equation}\label{eq: augmentedly locating supports}
	\supp\check{P}^+(D)f\subseteq K\times[a,b] \, \Rightarrow \,\supp f\subseteq L\times[a,b], \qquad 	\forall\, f \in\mathscr{E}'(X\times\R),
	\end{equation}
and 
	\begin{equation}\label{eq: augmentedly locating singular supports}
	\singsupp\check{P}^+(D) f \subseteq K\times[a,b] \, \Rightarrow \, \singsupp f \subseteq L\times[a,b],  \qquad	\forall\, f \in\mathscr{E}'(X\times\R).
	\end{equation}
A compact set $K \subseteq  X$ is said to be augmentedly $P$-locating for $X$ if 
 the pair $(K,K)$ is so.
\end{definition}

In Proposition \ref{cor: explicit Omega} below we show how this notion may be used to formulate a general internal quantitative approximation result for $\mathscr{E}_P(X)$. First, we give sufficient geometric conditions ensuring that a pair of compact sets $(K,L)$  is augmentedly $P$-locating for an open set $X \subseteq \R^d$. The next result is nothing else but a reformulation of the theorems of supports and singular supports.

 \begin{lemma}\label{prop: location of sing/supp-convex}
 	Let $P \in\C[\xi_1,\ldots, \xi_d] \backslash \{0\}$. Every convex set  $K \subseteq \R^d$ is augmentedly $P$-locating for $\R^d$.
 \end{lemma}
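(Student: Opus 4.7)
The plan is to reduce the statement directly to the classical theorems of supports and singular supports (Hörmander, Theorems 7.3.2 and 7.3.9), which are already cited in the excerpt. The key structural observation is that, since $K$ is convex and $[a,b]$ is a compact interval (hence convex), the product $K \times [a,b] \subseteq \R^{d+1}$ is itself convex. Consequently $K \times [a,b]$ equals its own convex hull, and a set is contained in $K \times [a,b]$ if and only if its convex hull is.

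First, I would reduce to the case $X = \R^d$: any $f \in \mathscr{E}'(X \times \R)$ can be extended by zero to a compactly supported distribution on $\R^{d+1}$, and this extension does not change the support or the singular support. So it is enough to verify \eqref{eq: augmentedly locating supports} and \eqref{eq: augmentedly locating singular supports} for $f \in \mathscr{E}'(\R^{d+1})$. This also takes care of the ``thus also'' clause in the statement.

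Next, I would observe that $P^+ \neq 0$ since $P \neq 0$, so $\check{P}^+ \neq 0$ and the theorem of supports applies to it. Given $f \in \mathscr{E}'(\R^{d+1})$ with $\supp \check{P}^+(D) f \subseteq K \times [a,b]$, the theorem of supports yields
\[
\operatorname{ch}(\supp f) \;=\; \operatorname{ch}(\supp \check{P}^+(D) f) \;\subseteq\; \operatorname{ch}(K \times [a,b]) \;=\; K \times [a,b],
\]
so $\supp f \subseteq K \times [a,b]$, which is \eqref{eq: augmentedly locating supports}. The proof of \eqref{eq: augmentedly locating singular supports} is identical, replacing the theorem of supports by the theorem of singular supports.

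There is essentially no obstacle here; the only substantive content is the convexity of $K \times [a,b]$ together with the (deep, but here invoked as a black box) theorems of supports and singular supports. The rest is bookkeeping: checking that zero extension from $X \times \R$ to $\R^{d+1}$ preserves the two supports, and that $\check{P}^+$ is a nonzero polynomial whenever $P$ is.
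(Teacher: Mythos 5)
Your proposal is correct and is exactly the argument the paper intends: the paper gives no written proof, stating only that the lemma ``is nothing else but a reformulation of the theorems of supports and singular supports,'' and your reduction via the convexity of $K\times[a,b]$ and the identity $\operatorname{ch}(\supp f)=\operatorname{ch}(\supp \check{P}^+(D)f)$ (and its singular-support analogue) is precisely that reformulation. The extension-by-zero remark correctly handles the ``thus also'' clause.
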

Although we will not explicitly use it, we would like to point out the following generalization of Lemma \ref{prop: location of sing/supp-convex}.
\begin{lemma}\label{prop: location of sing/supp-simple}
	Let $P \in\C[\xi_1,\ldots,\xi_d]$ and let $X\subseteq\R^d$ be open such that $P(D)^+: \mathscr{D}'(X\times\R) \rightarrow \mathscr{D}'(X\times\R)$ is surjective. Let $K \subseteq X$ be compact and define 
	$$\hat{K}_X=ch(K)\cap\{x\in X \,; \,d_X(x)\geq d(K,\R^d\backslash X)\}.$$
	Then, $(K,\hat{K}_X)$ is augmentedly $P$-locating for $X$.
\end{lemma}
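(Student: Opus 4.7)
The plan is to combine the theorem of supports (and its singular-support analog) applied in $\R^{d+1}$ with the $P^+$-convexity of the slab $X\times\R$ supplied by the surjectivity hypothesis, exploiting the fact that $P^+$ depends only on the first $d$ variables.

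First I would fix $[a,b]\subseteq\R$ and $f\in\mathscr{E}'(X\times\R)$ with $\supp\check{P}^+(D)f\subseteq K\times[a,b]$. Regarding $f$ as a compactly supported distribution on $\R^{d+1}$ and writing $\check{P}^+(D)f=(\check{P}^+(D)\delta)*f$, the theorem of supports \cite[Theorem 7.3.2]{HoermanderPDO1} gives $ch(\supp f)=ch(\supp\check{P}^+(D)f)\subseteq ch(K\times[a,b])=ch(K)\times[a,b]$, where the product formula for the convex hull is elementary. This pins down the $X$-projection of $\supp f$ inside $ch(K)$ and the $t$-projection inside $[a,b]$.

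Next, the surjectivity of $P^+(D)$ on $\mathscr{D}'(X\times\R)$ ensures that $\Omega:=X\times\R$ is both $P^+$-convex for supports and for singular supports. I would then invoke the distance bound inherent in H\"ormander's treatment of $P$-convexity: for $u\in\mathscr{E}'(\Omega)$, one has $d_\Omega(\supp u)\geq d_\Omega(\supp\check{P}^+(D)u)$, together with the analogous statement for singular supports (cf.\ \cite[Sections 10.6--10.7]{HoermanderPDO2}). Since $\R^{d+1}\backslash\Omega=(\R^d\backslash X)\times\R$, a direct computation yields $d_\Omega(x,t)=d_X(x)$ and hence $d_\Omega(K\times[a,b])=d(K,\R^d\backslash X)$. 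Combining with the convex hull step, every $(x,t)\in\supp f$ then satisfies $x\in ch(K)$, $t\in[a,b]$ and $d_X(x)\geq d(K,\R^d\backslash X)$, i.e.\ $\supp f\subseteq\hat{K}_X\times[a,b]$, which is \eqref{eq: augmentedly locating supports}. Statement \eqref{eq: augmentedly locating singular supports} will follow by the identical argument with $\supp$ replaced by $\singsupp$ throughout, now using \cite[Theorem 7.3.9]{HoermanderPDO1} and $P^+$-convexity for singular supports.

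The main obstacle will be to extract the distance inequality in exactly the sharp form required. While it is implicit in H\"ormander's semi-global existence characterizations of $P$-convexity, formalizing $\hat{L}^{P^+}_\Omega\subseteq ch(L)\cap\{y\in\Omega : d_\Omega(y)\geq d_\Omega(L)\}$ for $L=K\times[a,b]$ is the delicate step; once this is in hand, identifying the right-hand side with $\hat{K}_X\times[a,b]$ is a direct geometric calculation.
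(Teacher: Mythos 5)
Your proposal is correct and follows essentially the same route as the paper: the theorem of (singular) supports gives the convex-hull containment, and the $P^+$-convexity of $X\times\R$ gives the boundary-distance bound. The ``delicate step'' you flag is in fact exactly H\"ormander's Theorem 10.6.3 (and Theorem 10.7.3 for singular supports), which state the identity $d(\operatorname{(sing)\,supp} u,\complement\Omega)=d(\operatorname{(sing)\,supp}\check{P}^+(D)u,\complement\Omega)$ for $u\in\mathscr{E}'(\Omega)$, so no further formalization is needed.
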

\begin{proof} Set $\delta =d(K,\R^d\backslash X)$. 
		 Let $[a,b] \subseteq \R$ be an arbitrary interval and let  $f \in\mathscr{E}'(X\times\R)$ be such that $\operatorname{(sing)\, supp}\check{P}^+(D)f \subseteq K\times[a,b]$. Since  $X\times\R$ is $P^+$-convex for supports and singular supports, \cite[Theorem 10.6.3 and Theorem 10.7.3]{HoermanderPDO2} imply that
		 \begin{eqnarray*}
		\delta= d(K\times[a,b],\R^d \backslash X \times\R) &\leq& d(\operatorname{(sing)\, supp} \check{P}(D)f ,\R^d \backslash X \times\R) \\
		&=&d(\operatorname{(sing)\, supp} f,\R^d \backslash X \times\R).
		 \end{eqnarray*}
By	the theorems of supports and singular supports, we have that
$$ch(\operatorname{(sing)\, supp} f)=ch(\operatorname{(sing)\, supp}\check{P}^+(D)f) \subseteq ch(K) \times [a,b].$$
Hence,
	\begin{eqnarray*}
		\operatorname{(sing)\, supp}\supp f&\subseteq& \{(x,t)\in ch(K)\times[a,b] \, ; \,\delta\leq d\left((x,t),\R^d \backslash X \times\R\right)=d_X(x)\} \\
		&=&\hat{K}_X\times[a,b].
	\end{eqnarray*}
\end{proof}

Next, we consider subspace elliptic  operators and semi-elliptic operators with a single characteristic direction.
 
\begin{lemma}\label{lem: location of sing/supp-subspace}
	Let $P \in\C[\xi_1,\ldots,\xi_d]$ be  a polynomial which acts along a subspace $W$ of $\R^d$ and is elliptic on $W$. Let $X \subseteq \R^d$ be open and let $K \subseteq X$ be compact. Suppose that there is no $x\in\R^d$ such that  $X$ contains a bounded connected component of $(\R^d\backslash K)\cap (x+W)$. Then, $K$ is augmentedly $P$-locating for $X$.
\end{lemma}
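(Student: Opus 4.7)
The plan is to adapt the path-based argument from the proof of Theorem \ref{theo: Runge for subspace elliptic}, applied now to the augmented operator $P^+(D)$ and combined with the theorems of supports and singular supports.

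First, I would record the structural observation that $P^+$ acts along the subspace $W':=W\times\{0\}\subseteq\R^{d+1}$ and is elliptic on $W'$; consequently, a hyperplane in $\R^{d+1}$ is characteristic for $P^+(D)$ precisely when its normal lies in $(W^\perp\times\R)\setminus\{0\}$, i.e.\ has vanishing $W$-component. Given $f\in\mathscr{E}'(X\times\R)$ with $\supp\check{P}^+(D) f\subseteq K\times[a,b]$ (respectively $\singsupp\check{P}^+(D) f\subseteq K\times[a,b]$), I would extend $f$ by zero to an element of $\mathscr{E}'(\R^{d+1})$ and apply the theorems of supports and singular supports to the convolution $\check{P}^+(D)f=(\check{P}^+(D)\delta)*f$, obtaining $\operatorname{ch}(\supp f)\subseteq\operatorname{ch}(K)\times[a,b]$ (resp.\ $\operatorname{ch}(\singsupp f)\subseteq\operatorname{ch}(K)\times[a,b]$). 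It therefore suffices to exclude a point $(x_0,t_0)$ in $\supp f$ (resp.\ $\singsupp f$) with $x_0\in\operatorname{ch}(K)\setminus K$; note that $t_0\in[a,b]$ is then automatic.

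Assume for contradiction such a point exists and let $C_0$ be the connected component of $(\R^d\setminus K)\cap(x_0+W)$ containing $x_0$. By the geometric hypothesis, $C_0$ is either unbounded or not fully contained in $X$. I would then mimic the curve construction in the proof of Theorem \ref{theo: Runge for subspace elliptic} to produce a continuous, piecewise affine curve $\gamma:[0,T]\to C_0$ with $\gamma(0)=x_0$ and $(\gamma(T),t_0)\notin\supp f$ (resp.\ $\singsupp f$): in the unbounded case, take $\gamma(T)$ outside the compact set $\supp f$; otherwise, end $\gamma$ at a point of $C_0\setminus X$, which already lies outside $X\times\R\supseteq\supp f$.

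The final step is an iteration of \cite[Theorem 8.6.8]{HoermanderPDO1}, together with its $C^\infty$-analogue for the singular support case, along $\gamma$. I would partition $[0,T]$ into intervals on which $\gamma$ is affine and choose $\delta>0$ small enough that the $\delta$-tubes around the segments $\gamma([s_j,s_{j+1}])\times\{t_0\}$ avoid $K\times[a,b]$ and the $\delta$-ball around $(\gamma(T),t_0)$ avoids $\supp f$ (resp.\ $\singsupp f$). The crucial geometric fact is that each segment direction $\gamma(s_{j+1})-\gamma(s_j)$ lies in $W\times\{0\}$, so the linear functional $\langle N,\cdot\rangle$ attached to any characteristic hyperplane ($N\in W^\perp\times\R$) is constant along every segment. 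Thus a characteristic hyperplane meets the thin tube around a segment if and only if it meets the small ball at either endpoint, which is precisely the condition required by \cite[Theorem 8.6.8]{HoermanderPDO1}. Iterating the vanishing (resp.\ smoothness) of $f$ from the ball at $(\gamma(T),t_0)$ backwards through the segments forces $f$ to vanish (resp.\ be $C^\infty$) on a neighborhood of $(x_0,t_0)$, contradicting the choice of $(x_0,t_0)$. The main obstacle is the path construction, specifically reconciling the two cases (unbounded $C_0$ versus $C_0$ leaving $X$) with the extra requirement that $\gamma(T)$ land outside $\supp f$ or $\singsupp f$; the backward iteration along $\gamma$ is essentially the one used in the proof of Theorem \ref{theo: Runge for subspace elliptic}.
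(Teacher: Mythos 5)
Your proposal is correct and follows essentially the same route as the paper: reduce to points of $(\operatorname{ch}(K)\setminus K)\times[a,b]$ via the theorems of supports and singular supports, build a piecewise affine path in the relevant component of $(\R^d\setminus K)\cap(x+W)$ reaching $\partial_\infty X$, and propagate vanishing (resp.\ smoothness) backwards along thin tubes using the fact that characteristic hyperplanes of $P^+(D)$ have normals in $W^\perp\times\R$. The only point to make precise is the ``$C^\infty$-analogue'' of \cite[Theorem 8.6.8]{HoermanderPDO1} you invoke for the singular support case: this is not automatic for general operators, and the paper obtains it for subspace elliptic operators from \cite[Corollary 11.3.7]{HoermanderPDO2} combined with \cite[Lemma 8]{Kalmes19}.
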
	

\begin{proof}Let $[a,b] \subseteq \R$ be an arbitrary interval.
The proofs of \eqref{eq: augmentedly locating supports} and \eqref{eq: augmentedly locating singular supports} (with $L = K$) are very similar. Therefore, we first show \eqref{eq: augmentedly locating singular supports} in detail, and sketch the proof of \eqref{eq: augmentedly locating supports} afterwards.  Let  $f \in\mathscr{E}'(X\times\R)$ be such that $\singsupp \check{P}^+(D)f \subseteq K\times[a,b]$. We need to show that  $\singsupp f \subseteq K\times[a,b]$. By the theorem of singular supports, $\singsupp f \subseteq ch(K)\times[a,b]$. Hence, it suffices to show that for  every  $x=(x',x_{d+1})\in(\R^d\backslash K) \times[a,b]$ it holds that $x \notin \singsupp f$. To this end, we use the same technique as in the proof of Theorem \ref{theo: Runge for subspace elliptic}.   Let $C$ be the connected component of $(\R^d\backslash K)\cap (x'+W)$ in the one-point compactification of $\R^d$ that contains $x'$. Then, $C$ is pathwise connected. Our hypothesis on the geometry of $X$ and $K$ implies that $C$ intersects $\partial_\infty X$. Hence, there exists a continuous, piecewise affine curve $\gamma:[0,\infty)\rightarrow C \cap \R^d \subseteq  (\R^d \backslash K) \cap (x' +W)$ with $\gamma(0)=x'$ and $\lim_{t\rightarrow\infty} \gamma(t) \in \partial_\infty X$. In particular, there exists $T >0$ such that $(\gamma(T),x_{d+1})\notin\supp f$.  Let $t_0=0<t_1<\ldots<t_n=T$ be a partition of $[0,T]$ such that $\gamma$ is affine in $[t_j,t_{j+1}]$ for every $j=0,\ldots,n-1$.
Let $\varepsilon>0$ be such that $B((\gamma(T),x_{d+1}),\varepsilon) \cap \supp f = \emptyset$  and $\left((\gamma ([0,T]),x_{d+1})+B(0,\varepsilon)\right) \cap \left(K \times [a,b]\right) = \emptyset$.  Consider the open convex sets $X_2 = (\gamma ([t_{n-1},T]),x_{d+1})+B(0,\varepsilon)$ and $X_1=B((\gamma (T),x_{d+1}), \varepsilon) \subseteq X_2$. Then, $f$ vanishes in  $X_1$ and $ \check{P}^+(D)f$ is smooth in $X_2$. Every hyperplane  of the form $ \{y\in\R^{d+1}\, ; \,\langle N, y \rangle =c \}$, with $c \in \R$ and $N \in W^\perp \times \R$, that intersects $X_2$ already intersects $X_1$. By \cite[Corollary 11.3.7]{HoermanderPDO2} and  \cite[Lemma 8]{Kalmes19}, we obtain that $f$ is smooth in $X_2$. Iteration of this argument yields that $f$ is smooth in $(\beta([t_j,t_{j+1}]),x_{d+1})+B(0,\varepsilon)$ for every $j=0,\ldots n-1$. In particular, $f$ is smooth in $B(x,\varepsilon) = B((\beta(0),x_{d+1}),\varepsilon)$ and thus $x\notin \singsupp f$.

The proof of  \eqref{eq: augmentedly locating supports}  is verbatim the same as the one of \eqref{eq: augmentedly locating singular supports}, one only has to refer to \cite[Theorem 8.6.8]{HoermanderPDO1} instead of \cite[Corollary 11.3.7]{HoermanderPDO2}  and  \cite[Lemma 8]{Kalmes19}, and note that a hyperplane  $\{y\in\R^{d+1}\, ; \,\langle N, y \rangle =c \}$, with $c \in \R$ and $N \in \R^{d+1}\backslash \{0\}$ is characteristic for $P^+(D)$ if and only if $N\in W^\perp\times\R$.
\end{proof}

\begin{lemma}\label{prop: location of sing/supp-onedim}
	Let $P\in\C[\xi_1,\ldots,\xi_d]$ be semi-elliptic with principal part $P_m$ such that $\{P_m=0\}$ is one-dimensional. Let $X \subseteq \R^d$ be open and let $K \subseteq X$ be compact. Suppose that there is no $x\in\R^d$ such that $X$ contains a  bounded connected component of $\left(\R^d\backslash K\right)\cap\left(x+\{P_m=0\}^\perp\right)$.  Then, $K$ is augmentedly $P$-locating for $X$.
\end{lemma}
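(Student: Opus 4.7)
The plan is to adapt the proof of Lemma \ref{lem: location of sing/supp-subspace} essentially verbatim, with the subspace $W$ replaced throughout by $\{P_m=0\}^\perp\subseteq\R^d$ (which, since $\{P_m=0\}$ is one-dimensional, is a $(d-1)$-dimensional subspace playing the role of $W$). Given $f\in\mathscr{E}'(X\times\R)$ with $\bothsupp\check{P}^+(D)f\subseteq K\times[a,b]$, the theorems of supports and singular supports again reduce matters to showing that any $x=(x',x_{d+1})\in(\R^d\backslash K)\times[a,b]$ lies outside $\bothsupp f$. The geometric hypothesis produces, exactly as in the subspace elliptic case, a continuous, piecewise affine curve $\gamma:[0,\infty)\to(\R^d\backslash K)\cap(x'+\{P_m=0\}^\perp)$ with $\gamma(0)=x'$ and $\lim_{t\to\infty}d(\gamma(t),\partial_\infty X)=0$.

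The one computation that must be redone is the description of the characteristic hyperplanes of the augmented operator $P^+(D)$. Since $P^+(\xi_1,\ldots,\xi_{d+1})=P(\xi_1,\ldots,\xi_d)$ has the same degree $m$ as $P$, its principal part is $P^+_m(\xi_1,\ldots,\xi_{d+1})=P_m(\xi_1,\ldots,\xi_d)$, so that $\{P^+_m=0\}=\{P_m=0\}\times\R$. A hyperplane $\{y\in\R^{d+1}\,;\,\langle N,y\rangle=c\}$ is therefore characteristic for $P^+(D)$ precisely when its normal $N=(n_1,n_2)\in\R^d\times\R$ satisfies $n_1\in\{P_m=0\}$. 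On any affine piece $[t_{j-1},t_j]$ of $\gamma$, the direction $v$ of $\gamma$ lies in $\{P_m=0\}^\perp$, whence $\langle(v,0),(n_1,n_2)\rangle=\langle v,n_1\rangle=0$ for every such characteristic normal. Every characteristic hyperplane of $P^+(D)$ is thus parallel to the axis of the sausage $X_2=(\gamma([t_{j-1},t_j]),x_{d+1})+B(0,\varepsilon)$, and therefore meets $X_2$ if and only if it meets the endpoint ball $X_1=B((\gamma(t_j),x_{d+1}),\varepsilon)$.

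With this property at hand, the proof is concluded exactly as in Lemma \ref{lem: location of sing/supp-subspace}: one chooses $T>0$ so large that $(\gamma(T),x_{d+1})$ lies in an $\varepsilon$-ball disjoint from $\supp f$ (respectively from $\singsupp f$), and then iteratively propagates vanishing (respectively smoothness) backwards along $\gamma$ by applying \cite[Theorem 8.6.8]{HoermanderPDO1} (respectively \cite[Corollary 11.3.7]{HoermanderPDO2}) on each consecutive pair $(X_1,X_2)$, to conclude that $x\notin\bothsupp f$. The step where some care is needed is the application of \cite[Corollary 11.3.7]{HoermanderPDO2}: in the subspace elliptic case this was supplemented by \cite[Lemma 8]{Kalmes19} to verify a technical hypothesis on the characteristic variety of $P^+$, and I expect the corresponding verification in the present one-dimensional $\{P_m=0\}$ setting to go through along the same lines, precisely because $\{P^+_m=0\}$ is once again a product of a linear subspace of $\R^d$ with $\R$.
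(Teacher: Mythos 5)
Your proposal is correct and follows essentially the same route as the paper, which likewise runs the proof of Lemma \ref{lem: location of sing/supp-subspace} verbatim with $W$ replaced by $\{P_m=0\}^\perp$ and with the observation that a hyperplane $\{y\in\R^{d+1}\,;\,\langle N,y\rangle=c\}$ is characteristic for $P^+(D)$ exactly when $N\in\{P_m=0\}\times\R$. The one step you leave as an expectation---the singular-support propagation input replacing \cite[Lemma 8]{Kalmes19}---is supplied in the paper by citing \cite[Theorem 1]{FrKa}, which treats precisely the semi-elliptic case at hand.
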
	

\begin{proof}
The proof  is verbatim the same as the one of Lemma \ref{lem: location of sing/supp-subspace}, one only has to replace $W$ by $\{P_m=0\}^\perp$, refer to  \cite[Theorem 1]{FrKa} instead of \cite[Lemma 8]{Kalmes19}, and note that a hyperplane  $\{y\in\R^{d+1}\, ; \,\langle N, y \rangle =c \}$, with $c \in \R$ and $N \in \R^{d+1}\backslash \{0\}$ is characteristic for $P^+(D)$ if and only if $N\in\{P_m=0\}\times\R$.	
\end{proof} 

Next, we turn our attention to approximation results. Given two open sets $X,Y \subseteq \R^d$, we write $Y \Subset X$ to indicate that $Y$ is a relatively compact open subset of $X$; we emphasize that we only use this notation for open sets. Let $X \Subset \R^d$. We define $\mathscr{D}'(\overline{X})$ as the dual of $\mathscr{D}(\overline{X})$ and set
$$
\mathscr{D}'_P(\overline{X}) = \{f \in \mathscr{D}'(\overline{X}) \, ; \, P(D) f = 0\}.
$$
Furthermore, we write
$$
B_{\overline{X},J}=\{f \in\mathscr{D}'_P(\overline{X})\, ; \, \| f \|^*_{\overline{X},J}\leq 1\}, \qquad J \in \N_0,
$$
with $\| f \|^*_{\overline{X},J}\ = \sup \{ |\langle f, \varphi \rangle | \, ; \, \varphi \in \mathscr{D}(\overline{X}),  \| \varphi \|_{\overline{X},J} \leq 1    \}$. 

The following result is the condition $(P\Omega)$ for $\mathscr{D}'_P(X)$ \cite{B-D2006,DeKa22} with control on the compact sets occurring in it. Its proof is inspired by \cite[Theorem 4.1 and Propositon 8.1]{B-D2006} and  \cite[Section 3.4.5]{Wengenroth}.

\begin{proposition}\label{lem: explicit POmega}
	Let $P\in\C[\xi_1,\ldots,\xi_d]$ and let $X \subseteq \R^d$ be open such that $P^+(D): \mathscr{D}'(X \times \R) \to \mathscr{D}'(X \times \R)$ is surjective. Let $X_1, X_2, X_3 \Subset X$ with $X_1 \subseteq X_2 \subseteq X_3$  such that $(\overline{X_1}, \overline{X_2})$ is augmentedly $P$-locating for $X$. Then,
	$$
	\forall M\in\N_0\, \exists K \in \N_0, s, C > 0 \,  \forall\,\varepsilon\in (0,1)\,:\, B_{2,M}\subseteq\frac{C}{\varepsilon^s} B_{3,K}+\varepsilon B_{1,0},
	$$
	where $B_{j,J} = B_{\overline{X}_j,J}$, $J\in \N_0$, $j=1,2,3$.
\end{proposition}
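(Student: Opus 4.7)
\medskip

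\noindent\textbf{Proof plan.} I would carry out a quantitative elaboration of the Bonet--Doma\'nski argument \cite[Proposition 8.1]{B-D2006} relating surjectivity of $P^+(D)$ on $\mathscr{D}'(X\times\R)$ to condition $(P\Omega)$ for $\mathscr{D}'_P(X)$, tracking precisely the compact sets at each step; the augmented $P$-locating hypothesis on $(\overline{X_1},\overline{X_2})$ is what forces the small-side compact in $(P\Omega)$ to be exactly $\overline{X_1}$ rather than a larger set produced by an abstract argument. The first move is to dualize: by the bipolar theorem and Hahn--Banach, identifying the annihilator of $\mathscr{D}'_P(\overline{X_j})$ in $\mathscr{D}(\overline{X_j})$ with (the closure of) $\check{P}(D)\mathscr{D}(X)\cap\mathscr{D}(\overline{X_j})$, the inclusion $B_{2,M}\subseteq (C/\varepsilon^s) B_{3,K}+\varepsilon B_{1,0}$ becomes equivalent to a test-function decomposition: every $\varphi\in\mathscr{D}(\overline{X_2})$ can be written as $\varphi=\chi_3+\chi_1+\check{P}(D)\eta$ with $\chi_3\in\mathscr{D}(\overline{X_3})$, $\chi_1\in\mathscr{D}(\overline{X_1})$, $\eta\in\mathscr{D}(X)$, satisfying $(\varepsilon^s/C)\|\chi_3\|_{\overline{X_3},K}+\varepsilon^{-1}\|\chi_1\|_{\overline{X_1},0}\lesssim\|\varphi\|_{\overline{X_2},M}$.

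Next, I build such a decomposition by lifting to $X\times\R$ via $\check{P}^+(D)$. Tensor $\varphi$ with a bump $\psi\in\mathscr{D}(\R)$ of unit integral supported in a short interval $[a,b]$, producing $\Phi=\varphi\otimes\psi\in\mathscr{D}(X\times\R)$. By duality (Schwartz kernel theorem) and the open mapping theorem for the $(LF)$-space $\mathscr{D}(X\times\R)$, the surjectivity of $P^+(D)$ on $\mathscr{D}'(X\times\R)$ is equivalent to $\check{P}^+(D)$ having closed range on $\mathscr{D}(X\times\R)$; Grothendieck's factorization theorem then provides a continuous linear right-inverse with a \emph{fixed}, $\varepsilon$-independent regularity and support loss. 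Using this inverse one solves $\check{P}^+(D)F=\Phi$ with quantitative bounds $\|F\|_{L_0,N_0}\lesssim\|\Phi\|_{L_1,N_1}$ for compacts $L_0,L_1$ and orders $N_0,N_1$ depending only on the geometric data; this is the source of the final index $K$.

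Finally, I apply the augmented $P$-locating hypothesis via a cutoff in $t$. Introduce $\tau\in\mathscr{D}(\R)$ equal to $1$ on $[a,b]$ and supported slightly beyond, and split $F=\tau F+(1-\tau)F$. Since $\check{P}^+(D)$ acts only in the $x$-variable, the cutoff commutes with it, so $\check{P}^+(D)((1-\tau)F)=0$ outside $\supp\tau$. After a further correction exploiting the freedom to modify $F$ by a $P^+$-zero solution, one isolates a distribution $G$ whose $\check{P}^+(D)$-image is supported in $\overline{X_1}\times[a,b]$; the augmented $P$-locating condition then forces $\supp G\subseteq\overline{X_2}\times[a,b]$. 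Integrating in $t$ against the constant $1$, using $\int\psi\,dt=1$ and the $P$-zero structure of the integrated remainder, produces the decomposition of $\varphi$ needed for Step 1: one piece integrates to $\chi_3\in\mathscr{D}(\overline{X_3})$, the $G$-piece to $\chi_1\in\mathscr{D}(\overline{X_1})$, and the residue to $\check{P}(D)\eta$. The polynomial trade-off $1/\varepsilon^s$ emerges by tuning the width of $\tau$ as a suitable power of $\varepsilon$.

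The main obstacle is this last step: ensuring that the $G$-piece, once integrated in $t$, genuinely lands in $\mathscr{D}(\overline{X_1})$ rather than only in $\mathscr{D}(\overline{X_2})$ (the latter being what augmented $P$-locating immediately provides). I expect this to require a two-scale cutoff scheme combined with an auxiliary solve of $\check{P}^+(D)$ against a test function already supported in $\overline{X_1}\times[a,b]$, so that augmented locating is invoked at precisely the step where it yields the $\overline{X_1}$-support needed for the final decomposition.
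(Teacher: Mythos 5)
Your high-level instincts are right — the auxiliary variable is the source of the $\varepsilon$-quantification, Grothendieck's factorization theorem supplies the uniform constants, and the augmented locating property is what pins the small piece to $\overline{X_1}$ — but the argument as sketched has two genuine gaps, and the second one sits exactly at the crux of the statement. First, the ``continuous linear right-inverse'' for $\check{P}^+(D)$ on $\mathscr{D}(X\times\R)$ does not exist: surjectivity of $P^+(D)$ on $\mathscr{D}'(X\times\R)$ dualizes to $\check{P}^+(D)$ being \emph{injective} with (weakly) closed range on test functions, not surjective, and $\Phi=\varphi\otimes\psi$ is generically not in that range (pair it with any nonzero element of $\mathscr{D}'_{P^+}(X\times\R)$ to see the obstruction). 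So the step ``solve $\check{P}^+(D)F=\Phi$ with quantitative bounds'' has no meaning for $F$ a test function, and if you retreat to a fundamental-solution solve in $\mathscr{D}'$ you lose the compact support needed to invoke Definition \ref{def: augmenedtly locating}, which is stated for $f\in\mathscr{E}'(X\times\R)$. Second, as you yourself note, the locating property only converts support information on $\check{P}^+(D)f$ over $\overline{X_1}\times[a,b]$ into support information on $f$ over $\overline{X_2}\times[a,b]$; it naturally produces an $\overline{X_2}$-supported piece, whereas the proposition requires the $\varepsilon$-small ball to be $B_{1,0}$, i.e.\ supported in $\overline{X_1}$. Declaring this ``the main obstacle'' and postulating a ``two-scale cutoff scheme'' is not a proof of the step that carries the entire content of the result.

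For comparison, the paper does not build the decomposition by hand. It works on the primal side with $Z_j=\mathscr{D}'_{P^+}(\overline{X}_j\times[-a_j,a_j])$ and \emph{cites} the qualitative inclusion $Z_2\subseteq Z_3+\{f\in L_2 \,;\, \supp f\subseteq\overline{X}_1\times[-a_1,a_1],\ P^+(D)f=0\}$ from \cite[Section 3.4.5]{Wengenroth} — this is precisely where the $P^+$-convexity (equivalently, surjectivity of $P^+(D)$) and the augmented locating hypothesis are consumed, and where the small piece is placed in $\overline{X_1}$. Grothendieck's factorization theorem upgrades this to a bounded inclusion $B^+_{2,M}\subseteq C(B^+_{3,K}+B^+_{1,0})$ with a \emph{single}, $\varepsilon$-free constant. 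The $\varepsilon$-powers then come out of the Schwartz-kernel isomorphism $\Psi_j(f)(\varphi)=\langle f,\cdot\otimes\varphi\rangle$ by testing against the scaled functions $\varphi(\cdot/\varepsilon)$ in the auxiliary variable, for which $\|\varphi_\varepsilon\|_M\geq\varepsilon^{-M}$ while $\|\varphi_\varepsilon\|_J\leq\|\varphi\|_J\varepsilon^{-J}$ — a purely algebraic trade-off, rather than a tuning of cutoff widths inside a bespoke construction. If you want to salvage your route, you would essentially have to reprove the Wengenroth decomposition with the $\overline{X_1}$-support control; at that point you are reproducing the paper's main input rather than bypassing it.
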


\begin{proof}
Fix  $0 < a_1 < a_2 < a_3$ arbitrary and set $Z_j =  \mathscr{D}_{P^+}'(\overline{X}_j\times[-a_j,a_j])$, $j=1,2,3$. For $J \in \N_0$ we define the Banach space 
\[ 
Z_{j,J} = \{ f  \in Z_j \,; \, \|f\|^*_{\overline{X}_j\times[-a_j,a_j], J} < \infty  \}
\]
and denote by $B_{j,J}^+$ the unit ball in $Z_{j,J}$. Note that $Z_j = \varinjlim_{J \to \infty} Z_{j,J}$. As shown in \cite[Section 3.4.5]{Wengenroth}, the facts that $X \times \R$ is $P^+$-convex for supports as well as for singular supports and $(\overline{X_1}, \overline{X_2})$ is augmentedly $P$-locating for $X$ imply that 
\[
	Z_2\subseteq Z_3+\{ f  \in L_2(\R^{d+1})\, ; \,\supp  f \subseteq\overline{X}_1\times[-a_1,a_1], P^+(D) f=0\}
		\subseteq Z_3+Z_{1,0}.
\]
Note that we did not distinguish notationally between distributions defined on $\overline{X_j}\times[-a_j,a_j]$, $j=2,3$, and their restriction to $\overline{X_1}\times[-a_1,a_1]$. We employ this convention throughout the proof. By using Grothendieck's factorization theorem, we conclude from the previous inclusion that 
	\begin{equation}\label{eq: decomposition}
	\forall M\in\N_0\, \exists K \in \N_0, C > 0 \,:\, B_{2,M}^+ \subseteq C(B_{3,K}^+ + B_{1,0}^+). 
 \end{equation}
	For two locally convex spaces $E$ and $F$ we denote by $L(E,F)$ the space of continuous linear operators from $E$ to $F$. We set  $W(A,B)=\{T\in L(E,F)\,; \,T(A)\subseteq B\}$, $A\subseteq E$, $B\subseteq F$. For $j  = 1,2,3$ we have that
	$$\Psi_j:\mathscr{D}'(\overline{X}_j\times[-a_j,a_j])\rightarrow L(\mathscr{D}[-a_j,a_j],\mathscr{D}'(\overline{X}_j)), $$
	with $\Psi_j(f):\mathscr{D}([-a_j,a_j])\rightarrow\mathscr{D}'(\overline{X}_j),\varphi\mapsto\left(\psi\mapsto \langle f,\psi\otimes\varphi \rangle\right)$, $f\in\mathscr{D}'(\overline{X}_j\times[-a_j,a_j])$, is an isomorphism (see the remark after \cite[Theorem 21.6.9]{Jarchow1981}). Note that 
\[
		\Psi_j\left(\mathscr{D}'_{P^+}(\overline{X}_j\times[-a_j,a_j])\right)=L\left(\mathscr{D}([-a_j,a_j]),\mathscr{D}'_P(\overline{X}_j)\right)
\]
	and, for $J \in \N_0$, 
$$
		\Psi_j\left(B_{j,J}^+\right)=W\left(\{\varphi\in\mathscr{D}([-a_j,a_j]) \,; \, \|\varphi\|_J\leq 1\},B_{j,J}\right),
$$
where $\|\varphi\|_J = \|\varphi\|_{[-a_j,a_j],J}$. By \eqref{eq: decomposition} and the previous equality, we obtain that 
	\begin{gather*}
		\forall M \in\N_0\, \exists K \in \N_0, C > 0 \,:\,
		W\left(\{\varphi\in\mathscr{D}([-a_{2},a_{2}])\, ; \,\|\varphi\|_{M}\leq 1\},B_{2,M}\right)\subseteq \\
		C\left(W\left(\{\varphi\in\mathscr{D}([-a_3,a_3])\, ; \,\|\varphi\|_K\leq 1\},B_{3,K}\right)  +W\left(\{\varphi\in\mathscr{D}([-a_1,a_1]) \,;\,\|\varphi\|_0\leq 1\},B_{1,0}\right)\right),
	\end{gather*}
	 This implies that (cf.\  \cite[Proof of necessity in Theorem 4.1]{B-D2006}) 
\begin{gather}\label{almost-there}
		\forall M\in\N_0\, \exists K \in \N_0, C > 0 \,\forall \varphi \in \mathscr{D}([-a_1,a_1]) \, :\, \\ \nonumber
	\|\varphi\|_MB_{2,M}\leq C\left(\|\varphi\|_KB_{3,K}+ \|\varphi\|_0 B_{1,0}\right).
\end{gather}
Let $M\in\N_0$ be arbitrary. We may assume that $M \geq 1$. Choose $\varphi \in \mathscr{D}([-a_1,a_1])$ such that $\varphi'(0) = 1$. Set $\varphi_\varepsilon(x) = \varphi(x/\varepsilon)$ for  $\varepsilon\in (0,1)$. Then, $\varphi_\varepsilon \in \mathscr{D}([-a_1,a_1])$, $\| \varphi_\varepsilon\|_M \geq |\varphi_\varepsilon'(0)| = \varepsilon^{-1}$, and $\| \varphi_\varepsilon\|_J \leq \| \varphi\|_J \varepsilon^{-J}$ for all $J \in \N_0$. The result now follows by setting $\varphi = \varphi_{\varepsilon}$, $\varepsilon\in (0,1)$, in  \eqref{almost-there}.
\end{proof}
We now  deduce from Proposition \ref{lem: explicit POmega} the  condition $(\Omega)$  for $\mathscr{E}_P(X)$ (see \eqref{def-Omega}) with control on the seminorms occurring in it. To this end, following \cite{DeKa22}, we use a simple cut-off and regularization procedure and a basic result in the theory of the derived projective limit functor (the Mittag-Leffler lemma), see the book \cite{Wengenroth} for more information on this topic. For $X \Subset \R^d$ we write $\mathscr{E}(\overline{X})$ for the space consisting of all $f \in \mathscr{E}(X)$ such that $\partial^\alpha f$ has a continuous extension to $\overline{X}$ for all $\alpha \in \N^d$.  We endow it with the family of norms $\{\| \, \cdot \, \|_{{\overline{X},J}} \, ; \, J \in \N_0\}$, hence it becomes a Fr\'echet space. We set
$$
\mathscr{E}_P(\overline{X}) = \{f \in \mathscr{E}(\overline{X}) \, ; \, P(D) f = 0\}
$$
and endow it with the subspace topology from $\mathscr{E}(\overline{X})$. Then,  $\mathscr{E}_P(\overline{X})$ is also a Fr\'echet space.

\begin{proposition}\label{cor: explicit Omega}
	Let $P\in\C[\xi_1,\ldots,\xi_d]$ and let $X \subseteq \R^d$ be open such that $P^+(D): \mathscr{D}'(X \times \R) \to \mathscr{D}'(X \times \R)$ is surjective. Let $X_1, X_2 \Subset X$ with $X_1 \subseteq X_2$ be such that $(\overline{X_1}, \overline{X_2})$ is augmentedly $P$-locating for $X$. Then, for all $X'_1 \Subset X''_1 \Subset X_1$, $X' \Subset X$, and  $r_1,r_2\in\N_0$  there exist $s, C>0$ such that
	\begin{gather*}
	\forall f \in\mathscr{E}_P(X),\,\varepsilon\in (0,1) \,\exists\,h_\varepsilon \in\mathscr{E}_P(X) \,:\, \\ 
		\|f-h_\varepsilon\|_{\overline{X'_1},r_1}\leq\varepsilon\max\{\|f\|_{\overline{X''_1}, r_1+1}, \|f\|_{\overline{X_2}}\}  \quad
		\mbox{ and } \quad \|h_\varepsilon\|_{\overline{X'},r_2}\leq\frac{C}{\varepsilon^s}\|f\|_{\overline{X_2}}.
\end{gather*}
In particular, it holds that $	\|f-h_\varepsilon\|_{\overline{X'_1},r_1}\ \leq \varepsilon\|f\|_{\overline{X_2}, r_1+1}$.
\end{proposition}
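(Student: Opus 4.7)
The plan is to lift Proposition \ref{lem: explicit POmega} from the distributional setting to smooth zero-solutions through a regularization-and-iteration procedure modelled on \cite{DeKa22}.

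First, I would fix an auxiliary open set $X_2 \Subset X^\star \Subset X$ containing $\overline{X'}$ and apply Proposition \ref{lem: explicit POmega} to the triple $(X_1, X_2, X^\star)$. Noting that any $f \in \mathscr{E}_P(X)$ satisfies $\|f\|^*_{\overline{X_2}, M} \leq C_M \|f\|_{\overline{X_2}}$ (because $\|\varphi\|_{\overline{X_2}, M} \leq 1$ forces $\sup|\varphi| \leq 1$, and hence $|\langle f, \varphi\rangle| \leq \|f\|_{L^1(\overline{X_2})}$), this provides for each $M \in \N_0$ a distributional splitting $f|_{\overline{X_2}} = h + g$ in $\mathscr{D}'_P(\overline{X_2})$ with $h \in \mathscr{D}'_P(\overline{X^\star})$ satisfying $\|h\|^*_{\overline{X^\star}, K} \leq C \varepsilon^{-s} \|f\|_{\overline{X_2}}$, and $g \in \mathscr{D}'_P(\overline{X_1})$ satisfying $\|g\|^*_{\overline{X_1}, 0} \leq \varepsilon \|f\|_{\overline{X_2}}$, where $K = K(M)$, $s = s(M)$, and $C = C(M)$.

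Next, I would regularize using a mollifier $\rho_\delta(x) = \delta^{-d}\rho(x/\delta)$ with $\rho \in \mathscr{D}(B(0,1))$ and $\int \rho = 1$. The convolution $h * \rho_\delta$ is smooth and belongs to $\mathscr{E}_P(X^\star_\delta)$, where $X^\star_\delta = \{x \,;\, d(x, \R^d \setminus X^\star) > \delta\}$. For $\delta$ small enough that $\overline{X'} \subseteq X^\star_\delta$, standard derivative estimates on $\rho_\delta$ give $\|h * \rho_\delta\|_{\overline{X'}, r_2} \leq C_1 \varepsilon^{-s} \delta^{-(K+r_2+d)} \|f\|_{\overline{X_2}}$, and choosing $\delta$ as a small positive power of $\varepsilon$ absorbs the $\delta$-factor into a new exponent $s' > s$. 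For the approximation error, I would write $f - (h * \rho_\delta) = (f - f * \rho_\delta) + (g * \rho_\delta)$ on a neighborhood of $\overline{X'_1}$: the first piece is controlled by the classical mollification estimate $\|f - f * \rho_\delta\|_{\overline{X'_1}, r_1} \leq C_2 \delta \|f\|_{\overline{X''_1}, r_1+1}$ (using $\overline{X'_1} + B(0, \delta) \subseteq X''_1$ for small $\delta$), while the second is controlled by $\|g\|^*_{\overline{X_1}, 0}$ times derivatives of $\rho_\delta$. Balancing $\delta$ against $\varepsilon$ produces exactly the $\max$-structure $\varepsilon \max\{\|f\|_{\overline{X''_1}, r_1+1}, \|f\|_{\overline{X_2}}\}$, where the first entry arises from the mollification error on the smooth side and the second from the distributional remainder $g$.

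The main obstacle is that $h * \rho_\delta$ is only defined on $X^\star_\delta$, whereas the proposition demands $h_\varepsilon \in \mathscr{E}_P(X)$. To obtain a true element of $\mathscr{E}_P(X)$, I would invoke a Mittag-Leffler iteration: successively apply the distributional decomposition to the residual, mollify at geometrically decreasing scales, and sum the resulting corrections into a telescoping series $h_\varepsilon = \sum_n H_n$, where each $H_n \in \mathscr{E}_P(X)$ is a global approximant. The surjectivity hypothesis on $P^+(D): \mathscr{D}'(X\times\R) \to \mathscr{D}'(X\times\R)$, equivalently the property $(P\Omega)$ for $\mathscr{D}'_P(X)$, is precisely what furnishes the flexibility needed to extend each mollified piece to a function in $\mathscr{E}_P(X)$ while retaining polynomial control in $\varepsilon^{-1}$. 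The geometric decay of residual norms ensures convergence in $\mathscr{E}_P(X)$. The delicate bookkeeping of this iteration, namely choosing the scales of mollifications and auxiliary sets so that all three estimates close simultaneously while the series converges in the Fr\'echet topology of $\mathscr{E}_P(X)$, is where I expect the primary technical difficulty to lie.
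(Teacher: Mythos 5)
Your proposal is correct and follows essentially the same route as the paper: apply Proposition \ref{lem: explicit POmega} to split $f$ into a well-controlled piece plus a small remainder supported near $\overline{X_1}$, mollify, balance the mollification scale against $\varepsilon$ to get the $\max$-structure, and then globalize the resulting local approximant in $\mathscr{E}_P(\overline{X'_3})$ to an element of $\mathscr{E}_P(X)$. The only presentational difference is in the last step: where you sketch an explicit telescoping iteration, the paper invokes $\operatorname{Proj}^1(\mathscr{E}_P(\overline{\Omega}_j))=0$ (from surjectivity of $P(D)$ on $\mathscr{E}(X)$) together with the abstract Mittag-Leffler lemma in a single application, which packages exactly the bookkeeping you identify as the main difficulty.
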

\begin{proof} Fix $X'_1 \Subset X''_1 \Subset X_1$.
	
	\textsc{Auxiliary Claim 1:} \emph{For all $X_1 \Subset X'_3 \Subset X$ and  $r_1,r_2\in\N_0$  there exist $s, C_1,C_2>0$ and $\varepsilon_0 \in (0,1)$ such that 
	\begin{gather*}
		 	\forall f\in\mathscr{E}_P(X), \varepsilon\in (0,\varepsilon_0) \,\exists\,g_\varepsilon \in\mathscr{E}_P(\overline{X'_3})\,: \\
		 \|f-g_\varepsilon\|_{\overline{X'_1},r_1}\leq C_1\varepsilon\max\{\|f\|_{\overline{X''_1}, r_1+1}, \|f\|_{\overline{X_2}}\} \quad
		\mbox{ and } \quad \|g_\varepsilon\|_{\overline{X'_3},r_2}\leq\frac{C_2}{\varepsilon^s}\|f\|_{\overline{X_2}}.
	\end{gather*}
	}
\emph{Proof of auxiliary claim 1.} Let $X_1 \Subset X'_3 \Subset X$ and $r_1,r_2\in\N_0$  be arbitrary. Choose $X_3 \Subset X$  such that $X_2 \subseteq X_3$ and $X'_3 \Subset X_3$. Set $\varepsilon_0 = \min \{1,d(\overline{X'_1}, \R^d \backslash X''_1), d(\overline{X'_3},\R^d \backslash X_3) \}$. By Proposition \ref{lem: explicit POmega} and a rescaling argument, there are  $K \in \N_0$ and $s',C >0$ such that 
$$
	| \overline{X}_2|B_{\overline{X}_2,0} \subseteq\frac{C}{\delta^{s'}} B_{\overline{X}_3,K}+\delta B_{\overline{X}_1,0}, \qquad \forall \delta \in (0,1),
$$
where $|\overline{X}_2|$ denotes the Lebesgue measure of $\overline{X}_2$. Let $f\in\mathscr{E}_P(X)$  be arbitrary. As $f \in | \overline{X_2}|\|f\|_{\overline{X_2}}B_{\overline{X}_2,0}$,  we have that for all $\delta \in (0,1)$ there is $f_\delta \in C\delta^{-s'}  \|f\|_{\overline{X_2}}B_{\overline{X}_3,K}$ such that $f - f_\delta \in \delta \|f\|_{\overline{X_2}}B_{\overline{X}_1,0}$. Choose $\chi \in \mathscr{D}(\R^d)$ with $\chi \geq 0$, $\operatorname{supp} \chi \subseteq B(0,1)$, and $\int_{\R^d} \chi(x) \dx = 1$, and set $\chi_\varepsilon = \varepsilon^{-d}\chi(x/\varepsilon)$ for $\varepsilon \in (0,1)$. For $\delta \in (0,1)$ and $\varepsilon \in (0,\varepsilon_0)$ we define $g_{\delta, \varepsilon} = f_\delta \ast \chi_\varepsilon \in \mathscr{E}_P(\overline{X'_3})$. The mean value theorem implies that
 $$
 \| f - f \ast \chi_\varepsilon\|_{\overline{X'_1}, r_1} \leq \sqrt{d} \varepsilon \|f\|_{\overline{X''_1}, r_1+1}, \qquad \forall \varepsilon \in (0,\varepsilon_0).
 $$
We write $\| \chi \|_{j} = \| \chi \|_{\overline{B}(0,1), j}$ for $j \in \N_0$. Since $f - f_\delta \in \delta \|f\|_{\overline{X_2}}B_{\overline{X}_1,0}$, it holds that
 $$
  \| f \ast \chi_\varepsilon - g_{\delta, \varepsilon}\|_{\overline{X'_1}, r_1} =   \| (f - f_\delta) \ast \chi_\varepsilon\|_{\overline{X'_1}, r_1} \leq \frac{\delta}{\varepsilon^{r_1 + d}} \| \chi \|_{r_1}\|f\|_{\overline{X_2}}, \qquad \forall \varepsilon \in (0,\varepsilon_0),
 $$
 Similarly, as $f_\delta \in C\delta^{-s'}  \|f\|_{\overline{X_2}}B_{\overline{X}_3,K}$, we have that
  $$
 \|  g_{\delta, \varepsilon}\|_{\overline{X'_3}, r_2}  \leq \frac{C}{\delta^{s'}\varepsilon^{K +r_2 + d}} \| \chi \|_{K + r_2}\|f\|_{\overline{X_2}}, \qquad \forall \varepsilon \in (0,\varepsilon_0).
 $$
 Define $g_\varepsilon =  g_{\varepsilon^{r_1+d + 1}, \varepsilon} \in \mathscr{E}_P(\overline{X'_3})$ for  $\varepsilon \in (0,\varepsilon_0)$. Set $s = s'(r_1 + d + 1) + K + r_2 + d$. Then, for all $\varepsilon \in (0,\varepsilon_0)$
 \begin{eqnarray*}
 \| f - g_\varepsilon\|_{\overline{X'_1}, r_1} &\leq& \| f -  f\ast \chi_\varepsilon\|_{\overline{X'_1}, r_1}
 + \| f \ast \chi_\varepsilon - g_\varepsilon\|_{\overline{X'_1}, r_1} \\
 &\leq&
 (\sqrt{d} + \| \chi\|_{r_1}) \varepsilon\max\{\|f\|_{\overline{X''_1}, r_1+1}, \|f\|_{\overline{X_2}}\},
 \end{eqnarray*}
 and
  $$
 \|  g_\varepsilon\|_{\overline{X'_3}, r_2} \leq \frac{C \| \chi\|_{K +r_2} }{\varepsilon^s}\|f\|_{\overline{X_2}}.
 $$
 	\textsc{Auxiliary Claim 2:} \emph{For all $X' \Subset X$ and  $r_1,r_2\in\N_0$  there exist $s, C_1,C_2>0$ and $\varepsilon_0 \in (0,1)$ such that 
 		\begin{gather*}
 			\forall f\in\mathscr{E}_P(X), \varepsilon\in (0,\varepsilon_0) \,\exists\,g_\varepsilon \in\mathscr{E}_P(X)\,: \\
 			\|f-g_\varepsilon\|_{\overline{X'_1},r_1}\leq C_1\varepsilon\max\{\|f\|_{\overline{X''_1}, r_1+1}, \|f\|_{\overline{X_2}}\} \quad
 			\mbox{ and } \quad \|g_\varepsilon\|_{\overline{X'},r_2}\leq\frac{C_2}{\varepsilon^s}\|f\|_{\overline{X_2}}.
 		\end{gather*}
 	}
\noindent Note that, by a simple rescaling argument, the auxiliary claim 2 implies the result.  

\emph{Proof of auxiliary claim 2.}
 Let $(\Omega_j)_{j \in \N_0}$ be an  exhaustion by relatively compact open subsets of $X$, i.e., $\Omega_j \Subset \Omega_{j+1} \Subset X$ for all $j \in \N_0$  and $X = \bigcup_{j \in \N_0} \Omega_j$. Since $P^+(D): \mathscr{D}'(X \times \R) \to \mathscr{D}'(X \times \R)$ is surjective, $P(D): \mathscr{E}(X) \to \mathscr{E}(X)$ is as well. Hence, by \cite[Lemma 3.1]{DeKa22} (see also \cite[Section 3.4.4]{Wengenroth}), we have that $\operatorname{Proj}^1( \mathscr{E}_P(\overline{\Omega}_j))_{j \in \N_0} = 0$.
 Let  $X' \Subset X$ and $r_1,r_2\in\N_0$  be arbitrary.  We may assume that $X'_1 \Subset X'$. Since $\operatorname{Proj}^1( \mathscr{E}_P(\overline{\Omega}_j))_{j \in \N_0} = 0$, the Mittag-Leffler lemma \cite[Theorem 3.2.8]{Wengenroth} implies that there is $X'_3 \Subset X$ such that
 \[
 \mathscr{E}_P(\overline{X'_3}) \subseteq \mathscr{E}_P(X) + \{ f \in \mathscr{E}_P(\overline{X'}) \, ; \, \| f \|_{\overline{X'},r_2} \leq 1 \}.
 \]
 We may assume that $X' \subseteq X'_3$.
 By multiplying both sides of the above inclusion with $\delta$, we find that
 \begin{equation}
 	\label{ML}
 \mathscr{E}_P(\overline{X'_3}) \subseteq \mathscr{E}_P(X) + \{ f \in \mathscr{E}_P(\overline{X'}) \, ; \, \| f \|_{\overline{X'},r_2} \leq \delta \}, \qquad \forall \delta > 0.
 \end{equation}
 Let $s,C_1,C_2, \varepsilon_0$ be as in the auxiliary claim 1 (with $X'_3 \Subset X$ and  $r_1,r_2\in\N_0$ as above). Let $f\in\mathscr{E}_P(X)$  be arbitrary. Choose $g_\varepsilon \in\mathscr{E}_P(\overline{X'_3})$, $\varepsilon\in (0,\varepsilon_0)$, as in the auxiliary claim 1. By \eqref{ML}, there is $h_\varepsilon \in \mathscr{E}_P(X)$ such that  $\|g_\varepsilon-h_\varepsilon\|_{\overline{X'},r_2}\leq \varepsilon  \|f\|_{\overline{X_2}}$ for all $\varepsilon\in (0,\varepsilon_0)$. Hence, for all $\varepsilon \in (0,\varepsilon_0)$
$$
 \| f - h_\varepsilon\|_{\overline{X'_1}, r_1} \leq \| f -   g_\varepsilon\|_{\overline{X'_1}, r_1}
 + \| g_\varepsilon - h_\varepsilon\|_{\overline{X'}, r_2} \leq
 (C_1 +1) \varepsilon\max\{\|f\|_{\overline{X''_1}, r_1+1}, \|f\|_{\overline{X_2}}\}, 
 $$
 and
 $$
 \|  h_\varepsilon\|_{\overline{X'}, r_2} \leq  \|  g_\varepsilon - h_\varepsilon\|_{\overline{X'}, r_2} + \|  g_\varepsilon\|_{\overline{X'_3}, r_2}  \leq \frac{C_2 +1}{\varepsilon^s} \|f\|_{\overline{X_2}}.
 $$
\end{proof}

For hypoelliptic operators it is more natural to work with sup-seminorms. In this regard, we have the following result.

\begin{corollary}\label{cor: explicit Omega-hypo}
	Let $P\in\C[\xi_1,\ldots,\xi_d]$  be hypoelliptic and let $X \subseteq \R^d$ be open such that $P^+(D): \mathscr{D}'(X \times \R) \to \mathscr{D}'(X \times \R)$ is surjective. Let $X_1, X_2 \Subset X$ with $X_1 \subseteq X_2$ be such that $(\overline{X_1}, \overline{X_2})$ is augmentedly $P$-locating for $X$. Then, for all $X'_1 \Subset X_1$ and $X' \Subset X$  there exist $s, C>0$ such that
	
	\begin{gather*}
		\forall f \in\mathscr{E}_P(X),\,\varepsilon\in (0,1) \,\exists\,h_\varepsilon \in\mathscr{E}_P(X) \,:\, \\ 
		\|f-h_\varepsilon\|_{\overline{X'_1}}\leq\varepsilon\|f\|_{\overline{X_2}} \quad
		\mbox{ and } \quad \|h_\varepsilon\|_{\overline{X'}}\leq\frac{C}{\varepsilon^s}\|f\|_{\overline{X_2}}.
	\end{gather*}
\end{corollary}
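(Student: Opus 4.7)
The strategy is to derive the corollary from Proposition \ref{cor: explicit Omega} applied with $r_1 = r_2 = 0$, and then to absorb the derivative term $\|f\|_{\overline{X''_1},1}$ using the interior regularity available for hypoelliptic operators.

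First I would choose an auxiliary open set $X''_1$ with $X'_1 \Subset X''_1 \Subset X_1$. Applying Proposition \ref{cor: explicit Omega} to the triple $(X'_1, X''_1, X_2)$ with $r_1 = r_2 = 0$ produces constants $s, C_1 > 0$ such that, for each $f \in \mathscr{E}_P(X)$ and each $\varepsilon \in (0,1)$, there is $h_\varepsilon \in \mathscr{E}_P(X)$ with
\[
\|f-h_\varepsilon\|_{\overline{X'_1}} \leq \varepsilon \max\bigl\{\|f\|_{\overline{X''_1},1}, \|f\|_{\overline{X_2}}\bigr\}, \qquad \|h_\varepsilon\|_{\overline{X'}} \leq \frac{C_1}{\varepsilon^s}\|f\|_{\overline{X_2}}.
\]

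The central step is to eliminate the $\|f\|_{\overline{X''_1},1}$ term using hypoellipticity. As recalled in the introduction, since $P(D)$ is hypoelliptic, the natural Fréchet topology on $\mathscr{E}_P(U)$ coincides with the topology generated by the sup-seminorms on compact subsets of $U$, for any open $U \subseteq \R^d$. Applying this with $U = X_2$ and noting that the seminorm $g \mapsto \|g\|_{\overline{X''_1},1}$ is continuous on $\mathscr{E}_P(X_2)$ (since $\overline{X''_1} \subseteq X_2$), I obtain a constant $C_2 > 0$ and a compact set $L \Subset X_2$ such that $\|g\|_{\overline{X''_1},1} \leq C_2 \|g\|_L$ for all $g \in \mathscr{E}_P(X_2)$. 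Specializing this to $g = f|_{X_2}$ for $f \in \mathscr{E}_P(X)$ and using $L \subseteq \overline{X_2}$ yields
\[
\|f\|_{\overline{X''_1},1} \leq C_2 \|f\|_{\overline{X_2}}, \qquad \forall f \in \mathscr{E}_P(X).
\]

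Inserting this bound into the first inequality above gives $\|f-h_\varepsilon\|_{\overline{X'_1}} \leq C_3 \varepsilon \|f\|_{\overline{X_2}}$ with $C_3 := \max\{C_2, 1\}$. A routine rescaling $\varepsilon \mapsto \varepsilon/C_3$ (replace $h_\varepsilon$ by $\tilde h_\varepsilon := h_{\varepsilon/C_3}$, which is well-defined for $\varepsilon \in (0,1)$ since $C_3 \geq 1$) absorbs the excess constant on the approximation side and yields the claimed estimates with $C := C_1 C_3^s$. The only non-routine input is the hypoelliptic interior bound arising from the topology equivalence on $\mathscr{E}_P(X_2)$; once that is invoked, the rest of the argument is direct bookkeeping, and I do not anticipate any further obstacle.
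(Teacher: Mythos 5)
Your proposal is correct and follows essentially the same route as the paper: apply Proposition \ref{cor: explicit Omega} with $r_1=r_2=0$ and absorb the term $\|f\|_{\overline{X''_1},1}$ via the hypoelliptic estimate $\|f\|_{\overline{X''_1},1}\leq C'\|f\|_{\overline{X_2}}$ for $f\in\mathscr{E}_P(X)$. The paper states this bound without elaboration, whereas you justify it through the equivalence of topologies on $\mathscr{E}_P(X_2)$ and spell out the final rescaling; both are routine and your bookkeeping is sound.
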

\begin{proof}
	Fix $X''_1 \Subset X_1$ such that $X'_1 \Subset X''_1$. Since $P(D)$ is hypoelliptic, there is $C' >0$ such that
	$$
	\|f\|_{\overline{X''_1},1} \leq C' \|f\|_{\overline{X_2}}, \qquad f \in \mathscr{E}_P(X).
	$$
The result now follows from  Proposition \ref{cor: explicit Omega} with $r_1 = r_2 = 0$.
\end{proof}

\section{Quantitative Runge type approximation theorems}\label{sec: quantitative Runge}
We now combine the results from Sections \ref{sec: qualitative Runge} and \ref{sec: technical} to obtain quantitative approximation results. In particular, we shall show Theorems \ref{theo: quantitative convex}-\ref{theo: quantitative Runge for wave operator} from the introduction. We start with the following general result.

\begin{proposition}\label{prop: general quantitative} 
	Let $P\in\C[\xi_1,\ldots,\xi_d]$ and let $X\subseteq\R^d$ be open such that $P^+(D): \mathscr{D}'(X\times\R) \to \mathscr{D}'(X\times\R)$ is surjective. Let $Y \subseteq X$ be open such that  the restriction map $r_{\mathscr{E}}^P:\mathscr{E}_P(X)\rightarrow\mathscr{E}_P(Y)$ has dense range.  Let $Y_1, Y_2 \Subset Y$ with $Y_1 \subseteq Y_2$ be such that $(\overline{Y_1}, \overline{Y_2})$ is augmentedly $P$-locating for $X$. Then, for all $Y'_1 \Subset Y_1$, $X' \Subset X$, and  $r_1,r_2\in\N_0$  there exist $s, C>0$ such that
	\begin{gather*}
		\forall f\in\mathscr{E}_P(Y), \varepsilon\in (0,1) \,\exists h_\varepsilon \in\mathscr{E}_P(X) \, :\\
		\|f-h_\varepsilon\|_{\overline{Y'_1},r_1}\leq \varepsilon\|f\|_{\overline{Y_2},r_1+1} \quad \mbox{ and } \quad \|h_\varepsilon\|_{\overline{X'},r_2}\leq\frac{C}{\varepsilon^s}\|f\|_{\overline{Y_2}}.
			\end{gather*}
\end{proposition}

\begin{proof} 
Let $Y'_1 \Subset Y_1$, $X' \Subset X$, and  $r_1,r_2\in\N_0$ be arbitrary.
	By Proposition \ref{cor: explicit Omega}, we find that there are $s,C >0$ such that
	\begin{gather}\label{eq: decomposition 1}
		\forall g\in\mathscr{E}_P(X),\varepsilon\in (0,1)\,\,\exists\,h_\varepsilon \in\mathscr{E}_P(X) \,: \\ \nonumber
		\|g-h_\varepsilon\|_{\overline{Y'_1},r_1}\leq \varepsilon\|g\|_{\overline{Y_2},r_1+1} \quad \mbox{ and } \quad \|h_\varepsilon\|_{\overline{X'},r_2}\leq\frac{C}{\varepsilon^s}\|g\|_{\overline{Y_2}}.
	\end{gather}
	Let $f\in\mathscr{E}_P(Y)$ and $\varepsilon\in (0,1)$ be arbitrary. Since $r_{\mathscr{E}}^P:\mathscr{E}_P(X)\rightarrow\mathscr{E}_P(Y)$ has dense range, there is $g_\varepsilon\in\mathscr{E}_P(X)$ with $\|f-g_\varepsilon\|_{\overline{Y_2}, r_1+1}\leq\varepsilon\|f\|_{\overline{Y_2}}$. Choose $h_\varepsilon$ according to \eqref{eq: decomposition 1} for $g = g_\varepsilon$. Then,
	\begin{eqnarray*}
		\|f-h_\varepsilon\|_{\overline{Y'_1}, r_1}&\leq&\|f-g_\varepsilon\|_{\overline{Y'_1}, r_1}+\|g_\varepsilon-h_\varepsilon\|_{\overline{Y'_1}, r_1}\\
		&\leq&\|f-g_\varepsilon\|_{\overline{Y_2}, r_1+1}+\varepsilon \|g_\varepsilon\|_{\overline{Y_2},r_1+1} \\
		&\leq&\|f-g_\varepsilon\|_{\overline{Y_2}, r_1+1}+\varepsilon\left(\|f-g_\varepsilon\|_{\overline{Y_2}, r_1+1}+\|f\|_{\overline{Y_2}, r_1+1}\right)\\
		&\leq& 3\varepsilon\|f\|_{\overline{Y_2},r_1+1},
	\end{eqnarray*}
and
	$$\|h_\varepsilon\|_{\overline{X'},r_2}\leq\frac{C}{\varepsilon^s}\|g_\varepsilon\|_{\overline{Y_2}}\leq \frac{C}{\varepsilon^s}\left(\|f-g_\varepsilon\|_{\overline{Y_2},r_1+1}+\|f\|_{\overline{Y_2}}\right)\leq\frac{2C}{\varepsilon^s}\|f\|_{\overline{Y_2}}.$$
	This implies the result.
\end{proof}
For hypoelliptic operators, we obtain the following result.

\begin{proposition}\label{cor: general quantitative-hypo}
	Let $P\in\C[\xi_1,\ldots,\xi_d]$ be hypoelliptic and let $X\subseteq\R^d$ be open such that $P^+(D): \mathscr{D}'(X\times\R) \to \mathscr{D}'(X\times\R)$ is surjective. Let $Y \subseteq X$ be open such that  the restriction map $r_{\mathscr{E}}^P:\mathscr{E}_P(X)\rightarrow\mathscr{E}_P(Y)$ has dense range.  Let $Y_1, Y_2 \Subset Y$ with $Y_1 \subseteq Y_2$ be such that $(\overline{Y_1}, \overline{Y_2})$ is augmentedly $P$-locating for $X$. Then, for all $Y'_1 \Subset Y_1$ and $X' \Subset X$  there exist $s, C>0$ such that
	\begin{gather*}
		\forall f\in\mathscr{E}_P(Y), \varepsilon\in (0,1) \,\exists h_\varepsilon \in\mathscr{E}_P(X) \, :\\
		\|f-h_\varepsilon\|_{\overline{Y'_1}}\leq \varepsilon\|f\|_{\overline{Y_2}} \quad \mbox{ and } \quad \|h_\varepsilon\|_{\overline{X'}}\leq\frac{C}{\varepsilon^s}\|f\|_{\overline{Y_2}}.
	\end{gather*}
\end{proposition}
\begin{proof}
This can be shown in the same way as Proposition \ref{prop: general quantitative}  but by using  Corollary \ref{cor: explicit Omega-hypo} instead of Proposition \ref{cor: explicit Omega}.
\end{proof}

As a first application of  Proposition \ref{prop: general quantitative} and Proposition \ref{cor: general quantitative-hypo}, we show Theorem \ref{theo: quantitative convex}.
\begin{proof}[Proof of Theorem \ref{theo: quantitative convex}.]
By possibly shrinking $Y$, we may assume that this set is convex. $P^+(D): \mathscr{D}'(\R^{d+1}) \to \mathscr{D}'(\R^{d+1})$ is surjective. By \cite[Chapitre 1.2, Th\'eor\`eme 2]{Malgrange} (see also  \cite[Theorem 10.5.1]{HoermanderPDO2}) the restriction map $r_{\mathscr{E}}^P:\mathscr{E}_P(\R^d)\rightarrow\mathscr{E}_P(Y)$ has dense range. Choose $Y' \Subset Y$ convex such that $K \subseteq Y'$ and $\overline{Y'} \subseteq L$. Then, $\overline{Y'}$ is augmentedly  $P$-locating for $\R^d$ by Lemma \ref{prop: location of sing/supp-convex}. The result now follows from Proposition \ref{prop: general quantitative} and Proposition \ref{cor: general quantitative-hypo}.
\end{proof}
Our next goal is to use Proposition \ref{prop: general quantitative} and Proposition \ref{cor: general quantitative-hypo} to obtain  quantified versions of Theorem \ref{theo: Runge for subspace elliptic} and Theorem \ref{theo: qualitative Runge for single characteristic direction}, and to show Theorem \ref{theo: quantitative Runge for wave operator}. We need the following simple geometrical lemma.

\begin{lemma}\label{prop: geometric}
	Let $V,X,Y \subseteq\R^d$  with $Y \subseteq X$ be  such that  $X$ does not contain a bounded connected component of $\left(\R^d\backslash Y\right)\cap V$. Let $K \subseteq Y$ and let $C$ be a bounded connected component of $\left(\R^d\backslash K\right)\cap V$. If $C \subseteq X$, then $C \subseteq Y$.
\end{lemma}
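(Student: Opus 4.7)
The plan is to argue by contradiction. Suppose that $C \not\subseteq Y$, so we can pick some $x \in C$ with $x \notin Y$. The strategy is to produce from $x$ a bounded connected component of $(\R^d \setminus Y) \cap V$ that lies inside $X$, contradicting the hypothesis on $X$ and $Y$.

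The candidate is the obvious one: let $D$ be the connected component of $(\R^d \setminus Y) \cap V$ containing $x$. The key observation is the inclusion $(\R^d \setminus Y) \cap V \subseteq (\R^d \setminus K) \cap V$, which holds because $K \subseteq Y$. In particular, $D$ is a \emph{connected} subset of $(\R^d \setminus K) \cap V$ that meets $C$ (at $x$). Since connected components are the maximal connected subsets and $C$ is the connected component of $(\R^d \setminus K) \cap V$ containing $x$, this forces $D \subseteq C$.

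Now $C$ is bounded by assumption, so $D$ is bounded. Similarly $C \subseteq X$ by assumption, so $D \subseteq X$. Thus $D$ is a bounded connected component of $(\R^d \setminus Y) \cap V$ contained in $X$, contradicting the hypothesis on the pair $(X,Y)$. The whole argument is essentially one observation (the inclusion that makes components nest), so there is no serious obstacle; it is really a one-paragraph verification.
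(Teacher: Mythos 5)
Your proposal is correct and is essentially identical to the paper's own proof: both arguments take the component $D$ of $(\R^d\backslash Y)\cap V$ containing a point $x\in C\backslash Y$, use the inclusion $(\R^d\backslash Y)\cap V\subseteq(\R^d\backslash K)\cap V$ to conclude $D\subseteq C$, and derive the contradiction from $D$ being bounded and contained in $X$. Nothing to add.
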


\begin{proof}
Suppose that $C \not \subseteq Y$. Let $x\in C \backslash Y$ and let $D$ be the connected component of $\left(\R^d\backslash Y\right)\cap V$ which contains $x$. Since $\left(\R^d\backslash Y\right)\cap V \subseteq \left(\R^d\backslash K\right)\cap V$, we have that $D \subseteq C$ and so $D$ is bounded. Hence, $D \not \subseteq X$ and thus $C \not \subseteq X$, a contradiction.
\end{proof}

\begin{theorem}\label{theo: quantitative Runge for subspace elliptic}
	Let $P\in\C[\xi_1,\ldots,\xi_d]$ be a polynomial which acts along a subspace $W$ of $\R^d$ and is elliptic on $W$. Let $X,Y \subseteq\R^d$ be open with $Y \subseteq X$ such that $X$ is $P$-convex for supports. Suppose that there is no $x\in\R^d$ such that $X$ contains a compact connected component of $(\R^d\backslash Y)\cap (x+W)$. Let $Y'\Subset Y$ be  such that there is no $x\in\R^d$ such that $Y$ contains a bounded connected component of  $(\R^d\backslash\overline{Y'})\cap (x+W)$. Then, for all $Y'' \Subset Y'$, $X' \Subset X$, and  $r_1,r_2\in\N_0$  there exist $s, C>0$ such that
	\begin{gather*}
		\forall f\in\mathscr{E}_P(Y), \varepsilon\in (0,1) \,\exists h_\varepsilon \in\mathscr{E}_P(X) \, :\\
		\|f-h_\varepsilon\|_{\overline{Y''},r_1}\leq \varepsilon\|f\|_{\overline{Y'},r_1+1} \quad \mbox{ and } \quad \|h_\varepsilon\|_{\overline{X'},r_2}\leq\frac{C}{\varepsilon^s}\|f\|_{\overline{Y'}}.
	\end{gather*}
\end{theorem}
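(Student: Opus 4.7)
The plan is to deduce the result from Proposition \ref{prop: general quantitative} once its hypotheses are verified. The qualitative ingredient is already available: Theorem \ref{theo: Runge for subspace elliptic} applied to the pair $(X,Y)$ under the given hypothesis on $(\R^d \backslash Y)\cap(x+W)$ yields that $Y$ is $P$-convex for supports and that $r^P_\mathscr{E} : \mathscr{E}_P(X) \to \mathscr{E}_P(Y)$ has dense range. Surjectivity of the augmented operator $P^+(D) : \mathscr{D}'(X\times\R) \to \mathscr{D}'(X\times\R)$ then follows from Theorem \ref{theo: cases of Omega}(a), since $X$ is $P$-convex for supports and $P$ acts along $W$ and is elliptic there.

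Next, I would choose intermediate open sets $Y'' \Subset Y_1 \Subset Y_2 = Y' \Subset Y$ and set $Y_1' = Y''$, aiming to verify that the pair $(\overline{Y_1}, \overline{Y'})$ is augmentedly $P$-locating for $X$. Since $\overline{Y_1} \subseteq \overline{Y'}$, this reduces to showing that $\overline{Y'}$ itself is augmentedly $P$-locating for $X$ in the sense of Definition \ref{def: augmenedtly locating}; indeed, the implications \eqref{eq: augmentedly locating supports} and \eqref{eq: augmentedly locating singular supports} for the pair $(\overline{Y_1},\overline{Y'})$ follow trivially from the corresponding ones with $K = L = \overline{Y'}$, because $\supp \check{P}^+(D) f \subseteq \overline{Y_1}\times[a,b]$ entails $\supp \check{P}^+(D) f \subseteq \overline{Y'}\times[a,b]$ (and similarly for singular supports).

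The remaining and main geometric step is to verify the hypothesis of Lemma \ref{lem: location of sing/supp-subspace} for $K = \overline{Y'}$: no $x \in \R^d$ should admit a bounded connected component of $(\R^d\backslash \overline{Y'})\cap (x+W)$ contained in $X$. I would prove this by contradiction. Suppose $C$ is such a component with $C \subseteq X$. Since $(\R^d\backslash Y)\cap(x+W)$ is closed in $\R^d$, its bounded connected components coincide with its compact ones, so the first geometric hypothesis asserts precisely that $X$ contains no bounded connected component of $(\R^d\backslash Y)\cap(x+W)$. Because $\overline{Y'}\subseteq Y$, Lemma \ref{prop: geometric} applied with $V = x+W$ then forces $C \subseteq Y$. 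But then $C$ is a bounded connected component of $(\R^d\backslash \overline{Y'})\cap (x+W)$ lying in $Y$, contradicting the second geometric hypothesis.

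With all hypotheses in place, Proposition \ref{prop: general quantitative} applied with $Y_1' = Y''$, $Y_1$, $Y_2 = Y'$, $X'$ and the given $r_1,r_2$ produces constants $s, C > 0$ and the two desired estimates. The only non-routine ingredient is the geometric reduction in the third paragraph; everything else is a direct assembly of tools already developed in the paper.
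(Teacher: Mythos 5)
Your proposal is correct and follows essentially the same route as the paper's proof: surjectivity of $P^+(D)$ via Theorem \ref{theo: cases of Omega}(a), dense range via Theorem \ref{theo: Runge for subspace elliptic}, the geometric transfer from $Y$ to $\overline{Y'}$ via Lemma \ref{prop: geometric}, the augmentedly $P$-locating property via Lemma \ref{lem: location of sing/supp-subspace}, and finally Proposition \ref{prop: general quantitative}. The only (harmless) deviation is the insertion of an intermediate set $Y_1$; the paper simply takes $Y_1=Y_2=Y'$ and $Y_1'=Y''$.
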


\begin{proof}
	Since $X$ is $P$-convex for supports,  Theorem \ref{theo: cases of Omega}(a) yields that $P^+(D): \mathscr{D}'(X\times\R) \to \mathscr{D}'(X\times\R)$ is surjective. By Theorem \ref{theo: Runge for subspace elliptic}, the restriction map $r_\mathscr{E}^P:\mathscr{E}_P(X)\rightarrow\mathscr{E}_P(Y)$ has dense range.  Lemma \ref{prop: geometric} implies  that there is no $x\in\R^d$ such that $X$ contains a bounded connected component of $\left(\R^d\backslash\overline{Y'}\right)\cap\left(x+W\right)$, whence $\overline{Y'}$ is augmentedly $P$-locating for $X$ by Lemma \ref{lem: location of sing/supp-subspace}.  The result now  follows from Proposition \ref{prop: general quantitative}.
\end{proof}
 
\begin{theorem}\label{theo: quantitative Runge for elliptic-1}
	Let $P\in\C[\xi_1,\ldots,\xi_d]$ be elliptic. Let $X,Y \subseteq\R^d$ be open with $Y \subseteq X$. Suppose that $X$ contains no compact connected component of $\R^d\backslash Y$. Let $Y'\Subset Y$ be  such that $Y$ contains no bounded connected component of  $\R^d\backslash\overline{Y'}$. Then, for all $Y'' \Subset Y'$ and $X' \Subset X$  there exist $s, C>0$ such that
	\begin{gather}
		\label{approx-sect}
		\forall f\in\mathscr{E}_P(Y), \varepsilon\in (0,1) \,\exists h_\varepsilon \in\mathscr{E}_P(X) \, :\\ \nonumber
		\|f-h_\varepsilon\|_{\overline{Y''}}\leq \varepsilon\|f\|_{\overline{Y'}} \quad \mbox{ and } \quad \|h_\varepsilon\|_{\overline{X'}}\leq\frac{C}{\varepsilon^s}\|f\|_{\overline{Y'}}.
	\end{gather}
\end{theorem}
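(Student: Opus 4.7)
The proof will be a direct application of Proposition \ref{cor: general quantitative-hypo} once we verify its hypotheses in the elliptic setting with $W = \R^d$. The plan is to check, in order, that (i) the augmented operator is surjective on distributions, (ii) the restriction map has dense range, and (iii) $\overline{Y'}$ is augmentedly $P$-locating for $X$, and then invoke Proposition \ref{cor: general quantitative-hypo} with $Y_1 = Y_2 = Y'$, $Y'_1 = Y''$.

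First, because $P$ is elliptic, $P(D)$ is hypoelliptic and every open subset of $\R^d$ is $P$-convex for supports (and for singular supports) by \cite[Corollary 10.8.2 and Theorem 11.1.1]{HoermanderPDO2}. Applying Theorem \ref{theo: cases of Omega}(a) with the subspace $W = \R^d$ (on which an elliptic $P$ is trivially elliptic and along which $P$ trivially acts), or equivalently invoking Remark \ref{rem:augmented-elliptic}, we obtain that $P^+(D): \mathscr{D}'(X\times \R) \to \mathscr{D}'(X\times \R)$ is surjective. The density of $r_\mathscr{E}^P: \mathscr{E}_P(X) \to \mathscr{E}_P(Y)$ is then the Lax--Malgrange theorem; it is also an instance of Theorem \ref{theo: Runge for subspace elliptic} with $W = \R^d$, since the hypothesis there becomes the standard condition that $X$ contain no compact connected component of $\R^d \backslash Y$, which is exactly our assumption.

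The only step requiring a short geometric argument is showing that $\overline{Y'}$ is augmentedly $P$-locating for $X$. By Lemma \ref{lem: location of sing/supp-subspace} (again with $W = \R^d$), it suffices to verify that $X$ contains no bounded connected component of $\R^d \backslash \overline{Y'}$. Here I would apply Lemma \ref{prop: geometric} with $V = \R^d$ and $K = \overline{Y'} \subseteq Y$: suppose $C$ were a bounded connected component of $\R^d \backslash \overline{Y'}$ with $C \subseteq X$. The hypothesis on $X$ and $Y$ (recalling that a bounded connected component of the closed set $\R^d \backslash Y$ is automatically compact) and Lemma \ref{prop: geometric} would force $C \subseteq Y$, contradicting the assumption that $Y$ contains no bounded connected component of $\R^d \backslash \overline{Y'}$. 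Thus no such $C$ exists, and $\overline{Y'}$ is augmentedly $P$-locating for $X$.

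With these three ingredients in hand, Proposition \ref{cor: general quantitative-hypo} applied with $Y_1 = Y_2 = Y'$ and $Y'_1 = Y''$ produces the constants $s, C > 0$ and the approximants $h_\varepsilon$ satisfying exactly the two estimates in \eqref{approx-sect}. There is essentially no technical obstacle: the work has all been done in Sections \ref{sec: qualitative Runge} and \ref{sec: technical}, and the mild subtlety is just the chaining of Lemma \ref{prop: geometric} with Lemma \ref{lem: location of sing/supp-subspace} to propagate the geometric hypothesis from the pair $(X,Y)$ to the pair $(X, \overline{Y'})$ so that augmented $P$-locating applies.
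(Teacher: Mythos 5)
Your proof is correct and follows essentially the same route as the paper: surjectivity of $P^+(D)$ via Remark \ref{rem:augmented-elliptic}, dense range via Lax--Malgrange, the chaining of Lemma \ref{prop: geometric} (with $V=\R^d$) with Lemma \ref{lem: location of sing/supp-subspace} (with $W=\R^d$) to see that $\overline{Y'}$ is augmentedly $P$-locating for $X$, and then Proposition \ref{cor: general quantitative-hypo}. Your observation that a bounded connected component of the closed set $\R^d\backslash Y$ is automatically compact correctly reconciles the hypothesis of the theorem with that of Lemma \ref{prop: geometric}.
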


\begin{proof}
	Since $P$ is elliptic,  $P^+(D): \mathscr{D}'(X\times\R) \to \mathscr{D}'(X\times\R)$ is surjective (see Remark \ref{rem:augmented-elliptic}). By the Lax-Malgrange theorem, the restriction map $r_\mathscr{E}^P:\mathscr{E}_P(X)\rightarrow\mathscr{E}_P(Y)$ has dense range.  Lemma \ref{prop: geometric} implies that $X$ contains no bounded connected component of $\R^d\backslash \overline{Y'}$, whence $\overline{Y'}$ is augmentedly $P$-locating for $X$ by Lemma \ref{lem: location of sing/supp-subspace} with $W = \R^d$.  The result now  follows from Proposition \ref{cor: general quantitative-hypo}.
\end{proof}

\begin{proof}[Proof of  Theorem \ref{theo: quantitative Runge for wave operator}] Set $P_1(x_1,x_2)=x_1+x_2$ and $P_2(x_1,x_2)=x_2-x_1$, then $P=P_1P_2$.
		Since $X$ is $P$-convex for supports, Theorem \ref{theo: cases of Omega}(a) and Lemma \ref{prop: composition of differential operators}(a) imply that $P^+(D): \mathscr{D}'(X\times\R) \to \mathscr{D}'(X\times\R)$ is surjective. By Theorem \ref{theo: Runge for affine factors}, the restriction map $r_\mathscr{E}^P:\mathscr{E}_P(X)\rightarrow\mathscr{E}_P(Y)$ has dense range.   Lemma \ref{prop: geometric} implies  that there is no $x\in\R^d$ such that $X$ contains a bounded connected component of one of the sets  $\left(\R^d\backslash L \right)\cap \{(x_1+s,x_2+s)\,; \,s\in\R\}$ or  $\left(\R^d\backslash L \right)\cap \{(x_1-s,x_2+s)\,; \,s\in\R\}$.  Since $P_1$ and $P_2$ act along the subspaces $\mbox{span}\{(1,1)\}$ and $\mbox{span}\{(-1,1)\}$, respectively, and are elliptic there, $L$ is augmentedly $P_j$-locating for $X$, $j=1,2$, by Lemma \ref{lem: location of sing/supp-subspace}.
		This implies that $L$ is augmentedly $P$-locating for $X$. The result now follows from  Proposition \ref{prop: general quantitative}.	
\end{proof}
\begin{remark}
 More generally, taking into account Theorem \ref{theo: Runge for affine factors},  the same arguments used in the above proof yield a  quantitative approximation result for  partial differential operators that factor into first order operators. We leave the formulation and proof of this result to the reader.
\end{remark}

\begin{theorem}\label{theo: quantitative Runge for single characteristic direction}
		Let $P\in\C[\xi_1,\ldots,\xi_d]$ be semi-elliptic with principal part $P_m$ such that $\{P_m=0\}$ is a one-dimensional subspace of $\R^d$. Let $X,Y\subseteq\R^d$ be open with $Y \subseteq X$ such that $X$ is  $P$-convex for supports. Suppose that there is no $x\in\R^d$ such that $X$ contains a compact connected component of $(\R^d\backslash Y)\cap(x+\{P_m=0\}^\perp)$.  Let $Y'\Subset Y$ be  such that there is no $x\in\R^d$ such that $Y$ contains a bounded connected component of  $(\R^d\backslash\overline{Y'})\cap (x+\{P_m=0\}^\perp)$. Then, for all $Y'' \Subset Y'$ and $X' \Subset X$  there exist $s, C>0$ such that 	\eqref{approx-sect} holds.
\end{theorem}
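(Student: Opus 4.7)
The plan is to invoke Proposition \ref{cor: general quantitative-hypo} with $Y_1 = Y_2 = Y'$ and $Y'_1 = Y''$; this is legitimate because a semi-elliptic operator is in particular hypoelliptic. The overall argument will parallel closely those of Theorem \ref{theo: quantitative Runge for subspace elliptic} and Theorem \ref{theo: quantitative Runge for elliptic-1}, with the role of the subspace $W$ replaced by the characteristic direction's orthogonal complement $\{P_m=0\}^\perp$, and with part (b) rather than part (a) of Theorem \ref{theo: cases of Omega} furnishing the key equivalence.

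First, because $X$ is $P$-convex for supports, the equivalence $(ii)\Leftrightarrow(iii)$ in Theorem \ref{theo: cases of Omega}(b) yields immediately that the augmented operator $P^+(D):\mathscr{D}'(X\times\R)\to\mathscr{D}'(X\times\R)$ is surjective. Second, the qualitative density of the restriction map $r_\mathscr{E}^P:\mathscr{E}_P(X)\to\mathscr{E}_P(Y)$ follows directly from Theorem \ref{theo: qualitative Runge for single characteristic direction} (the qualitative statement proved in Section \ref{sec: qualitative Runge}), whose geometric hypothesis on $(\R^d\backslash Y)\cap(x+\{P_m=0\}^\perp)$ is exactly the one assumed here.

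The remaining ingredient, and the only step requiring a little care, is to verify that $\overline{Y'}$ is augmentedly $P$-locating for $X$. For each fixed $x\in\R^d$ I would apply Lemma \ref{prop: geometric} with $V = x+\{P_m=0\}^\perp$ and $K=\overline{Y'}\subseteq Y$: every bounded connected component of $(\R^d\backslash\overline{Y'})\cap V$ contained in $X$ is forced to lie in $Y$, but by assumption $Y$ contains no such bounded component, so neither does $X$. Lemma \ref{prop: location of sing/supp-onedim} then gives the augmentedly $P$-locating property of $\overline{Y'}$ for $X$, which in particular makes the pair $(\overline{Y'},\overline{Y'})$ augmentedly $P$-locating. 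Once these three ingredients are in hand, Proposition \ref{cor: general quantitative-hypo} applied with the choices indicated above produces \eqref{approx-sect} verbatim, and no genuine obstacle is anticipated: the entire apparatus of Sections \ref{sec: qualitative Runge} and \ref{sec: technical} has been tailored to cover exactly this case, and the present theorem is essentially the semi-elliptic, one-dimensional-characteristic analogue of Theorem \ref{theo: quantitative Runge for elliptic-1}.
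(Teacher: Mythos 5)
Your proposal is correct and follows essentially the same route as the paper: Theorem \ref{theo: cases of Omega}(b) for surjectivity of $P^+(D)$ (the paper implicitly uses (i)$\Rightarrow$(iii), since $P$-convexity for supports is surjectivity on $\mathscr{E}(X)$, but the four conditions are equivalent so this is immaterial), Theorem \ref{theo: qualitative Runge for single characteristic direction} for the qualitative density, Lemma \ref{prop: geometric} combined with Lemma \ref{prop: location of sing/supp-onedim} to see that $\overline{Y'}$ is augmentedly $P$-locating for $X$, and finally Proposition \ref{cor: general quantitative-hypo}.
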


\begin{proof}
		Since $X$ is $P$-convex for supports,  Theorem \ref{theo: cases of Omega}(b) yields that $P^+(D): \mathscr{D}'(X\times\R) \to \mathscr{D}'(X\times\R)$ is surjective. By Theorem \ref{theo: qualitative Runge for single characteristic direction}, the restriction map $r_\mathscr{E}^P:\mathscr{E}_P(X)\rightarrow\mathscr{E}_P(Y)$ has dense range.  Lemma \ref{prop: geometric} implies  that there is no $x\in\R^d$ such that $X$ contains a bounded connected component of $\left(\R^d\backslash\overline{Y'}\right)\cap\left(x+ \{P_m=0\}
		^\perp\right)$, whence $\overline{Y'}$ is augmentedly $P$-locating for $X$ by Proposition \ref{prop: location of sing/supp-onedim}.  The result now  follows from Proposition \ref{cor: general quantitative-hypo}.	
\end{proof}

We now use  Theorem \ref{theo: quantitative Runge for single characteristic direction} to formulate a quantitative approximation result for certain $k$-parabolic  operators  on tubular domains. As in the introduction, we denote the elements of $ \R^d = \R^{n+1}$ by $(t,x)$ with $t\in\R$, $x\in\R^n$.

\begin{corollary}\label{cor: quantitative Runge for parabolic}
	 Let $Q\in\C[\xi_1,\ldots,\xi_n]$ be an elliptic polynomial of degree $m$ with principal part $Q_m$, let $r\in\N$ with $m>r$, and let $\alpha\in\C\backslash\{0\}$. Suppose that $\{ Q_m(x) \, ; \, x \in \R^n\backslash\{0\} \} \cap \{\alpha t^r \,;\,t\in\R\} = \emptyset$. Consider $P(t,x)=\alpha t^r-Q(x)$.
	Let $G,H\subseteq\R^n$ be open with $H\subseteq G$ and suppose that $G$ does not contain a compact connected component of $\R^n\backslash H$. Let $J\subseteq\R$ be an open interval and let $Y'\Subset J\times H$ be such that there is no $c\in\R$ such that $J \times H$ contains a compact connected component of  $\left(\R^{n+1}\backslash\overline{Y'}\right) \cap\{(c, x) \, ; \,  x \in\R^{n}\}$. Then, for all $Y'' \Subset Y'$ and $X' \Subset \R\times G$  there exist $s, C>0$ such that
	\begin{gather*}
		\forall f\in\mathscr{E}_P(J \times H), \varepsilon\in (0,1) \,\exists h_\varepsilon \in\mathscr{E}_P(\R \times G) \, :\\
		\|f-h_\varepsilon\|_{\overline{Y''}}\leq \varepsilon\|f\|_{\overline{Y'}} \quad \mbox{ and } \quad \|h_\varepsilon\|_{\overline{X'}}\leq\frac{C}{\varepsilon^s}\|f\|_{\overline{Y'}}.
	\end{gather*}
\end{corollary}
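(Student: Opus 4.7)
The plan is to deduce this corollary directly from Theorem \ref{theo: quantitative Runge for single characteristic direction}, applied with $X = I \times G$, $Y = J \times H$, and the same $Y', Y'', X'$ as in the statement. This requires verifying three items: that $P$ falls under the scope of that theorem, that $X$ is $P$-convex for supports, and that the hypothesized geometric separations translate into the conditions required there.

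First I would check that $P(t,x) = \alpha t^r - Q(x)$ is semi-elliptic with one-dimensional $\{P_m = 0\}$. Since $m > r$, the top-degree principal part is $P_m(t,x) = -Q_m(x)$, so the ellipticity of $Q$ yields $\{P_m = 0\} = \R \times \{0\} \subseteq \R^{n+1}$, a one-dimensional subspace with orthogonal complement $\{0\} \times \R^n$. Consequently the characteristic hyperplanes of $P(D)$ are precisely the time slices $\{t = c\}$, $c \in \R$. Semi-ellipticity follows from the weight $(m/r, 1, \ldots, 1)$: the corresponding weighted principal part equals $\alpha t^r - Q_m(x)$, and is non-vanishing off the origin exactly by the assumption $\{Q_m(x) : x \in \R^n \backslash \{0\}\} \cap \{\alpha t^r : t \in \R\} = \emptyset$.

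Next I would show that $I \times G$ is $P$-convex for supports using Theorem \ref{theo: cases of Omega}(b). By the previous step it suffices to check the minimum principle for $d_{I \times G}$ on each time slice $\{t = c\}$. For $c \in I$ one has $d_{I \times G}(c, x) = \min(d(c, \R \backslash I), d_G(x))$; since $d_G$ itself satisfies the minimum principle on $G$ (a minimum taken at an interior point of a compact $K \subseteq G$ can be strictly decreased by moving from the minimizer toward a nearest boundary point of $G$ until one exits $K$ through $\partial K$), and the pointwise minimum with a constant preserves this property, the required minimum principle holds. For $c \notin I$ the slice does not meet $X$ and the condition is vacuous.

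Finally, I would translate the two geometric hypotheses. Affine characteristic hyperplanes have the form $\{c\} \times \R^n$. For $c \in J$ the connected components of $(\R^{n+1} \backslash (J \times H)) \cap (\{c\} \times \R^n)$ are exactly $\{c\} \times C$ as $C$ ranges over the connected components of $\R^n \backslash H$, and such a component is contained in $I \times G$ iff $C \subseteq G$; for $c \in I \backslash J$ the intersection is the whole unbounded slice, which has no compact component. Hence the stated hypothesis that $G$ contains no compact connected component of $\R^n \backslash H$ matches precisely the first geometric condition of Theorem \ref{theo: quantitative Runge for single characteristic direction}. The condition on $Y'$ is already phrased in the required form (bounded coincides with compact for closed subsets of an affine hyperplane), and a direct application of the theorem yields the claim. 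The only delicate point in the argument is the minimum principle for $d_G$; everything else reduces to unpacking definitions and matching hypotheses.
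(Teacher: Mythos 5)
Your proof is correct and follows essentially the same route as the paper: identify $P$ as semi-elliptic with $\{P_m=0\}=\R\times\{0\}$ one-dimensional, establish that $I\times G$ is $P$-convex for supports, translate the geometric hypotheses slice by slice, and apply Theorem \ref{theo: quantitative Runge for single characteristic direction}. The only difference is that you verify the $P$-convexity by hand, checking the minimum principle for $d_{I\times G}=\min(d(\cdot,\R\backslash I),d_G)$ on the characteristic slices via Theorem \ref{theo: cases of Omega}(b), whereas the paper simply cites \cite[Corollary 5]{Kalmes19}; both are valid.
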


\begin{proof} Note that $P$ is semi-elliptic with principal part $P_m(x,t) = Q_m(x)$. Hence, $\{P_m=0\}$ is a one-dimensional subspace of $\R^{n+1}$. By  \cite[Corollary 5]{Kalmes19}, $\R \times G$ is $P$-convex for supports. The result now follows from Theorem \ref{theo: quantitative Runge for single characteristic direction}.
\end{proof}

Finally, we deduce Theorem \ref{theo: quantitative Lax-Malgrange} and  Theorem \ref{theo: quantitative parabolic} from Theorem \ref{theo: Runge for subspace elliptic} and Corollary \ref{cor: quantitative Runge for parabolic}, respectively. To this end, we need a geometrical lemma. Recall that an open  set $U \subseteq \R^d$ is said to have continuous boundary if for each $\xi \in \partial U$ there exists a homeomorphism  $h$ from the closed unit ball $\overline{B}(0,1)$ in $\R^d$ onto a compact neighborhood $K$ of $\xi$ such that $h(0) = \xi$ and  $\overline{U} \cap h\left(\overline{B}(0,1)\right)= h\left(\{x\in \overline{B}(0,1)\, ; \,x_d\leq 0\}\right)$.
\begin{lemma}\label{lem: geometric} 
		Let $X\subseteq\R^d$ be open and let $U \Subset X$ have continuous boundary. Set $K = \overline{U}$. Suppose that $X$ contains no bounded connected component of $\R^d\backslash K$. Then, every open subset $Y$ of $\R^d$ with $K\subseteq Y\subseteq X$ contains an open set $Z$ with $K\subseteq Z$ such  that  $X$ contains no bounded connected component of $\R^d\backslash Z$.
\end{lemma}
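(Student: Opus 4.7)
The plan is to take $Z$ to be a sufficiently thin open tubular neighborhood of $K$ inside $Y$, constructed from a bicollar at $\partial K$, chosen so that the inclusion $\R^d\backslash Z\hookrightarrow\R^d\backslash K$ induces a boundedness-preserving bijection on connected components and so that $Z\cap(\R^d\backslash X)=\emptyset$. These two features combined with the hypothesis on $K$ will force each bounded component of $\R^d\backslash Z$ to meet $\R^d\backslash X$, which is exactly the claim.

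The first step is a finiteness observation. Since $K\subseteq X$ is compact and $\R^d\backslash X$ is closed, $\delta_0:=d(K,\R^d\backslash X)>0$. By hypothesis, each bounded connected component $C$ of $\R^d\backslash K$ contains a point $p_C\in\R^d\backslash X$, and then the open ball $B(p_C,\delta_0)$ is connected, disjoint from $K$ and meets $C$, hence is contained in $C$. These balls being pairwise disjoint and all contained in a common bounded region of $\R^d$ (every bounded component of $\R^d\backslash K$ sits inside any closed ball containing the compact set $K$), there are only finitely many such bounded components $C_1,\ldots,C_m$, with distinguished points $p_1,\ldots,p_m$.

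Next, the continuous-boundary hypothesis implies that $\partial K=\partial U$ is a locally flat codimension-one submanifold of $\R^d$, so by Brown's bicollar theorem it admits an open bicollar: an open neighborhood $W\subseteq\R^d$ of $\partial K$ and a homeomorphism $\phi:\partial K\times(-1,1)\to W$ with $\phi(x,0)=x$, $\phi(\partial K\times(-1,0))\subseteq U$ and $\phi(\partial K\times(0,1))\subseteq\R^d\backslash K$. For $\eta\in(0,1)$ set $Z_\eta:=U\cup\phi(\partial K\times(-\eta,\eta))$; this is open in $\R^d$ as the union of two open sets, contains $K$, and shrinks to $K$ in Hausdorff distance as $\eta\to 0$ by uniform continuity of $\phi$ on the compact set $\partial K\times[-1/2,1/2]$. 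Choose $\eta$ so small that $Z_\eta$ lies both in $Y$ and in the open $\delta_0$-neighborhood of $K$; then $Z_\eta\subseteq Y$, $Z_\eta\cap(\R^d\backslash X)=\emptyset$, and in particular each $p_j$ lies in $\R^d\backslash Z_\eta$.

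Finally, the components bijection is effected by the continuous retraction $\rho:\R^d\backslash K\to\R^d\backslash Z_\eta$ defined by $\rho(y)=y$ for $y\notin W$ and $\rho(\phi(x,s))=\phi(x,\max(s,\eta))$ for $s\in(0,1)$. Since the collar segments $t\mapsto\phi(x,t)$ remain within one component of $\R^d\backslash K$, $\rho$ maps each connected component $C$ of $\R^d\backslash K$ into itself, and because $\rho$ is the identity on $\R^d\backslash Z_\eta$ one gets $\rho(C)=C\cap(\R^d\backslash Z_\eta)$, which is connected and, when non-empty, is the unique component of $\R^d\backslash Z_\eta$ contained in $C$. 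Each bounded $C_j$ yields a non-empty image containing $p_j$, whereas unbounded components of $\R^d\backslash K$ produce unbounded components of $\R^d\backslash Z_\eta$ (since $Z_\eta$ is bounded). Hence every bounded component of $\R^d\backslash Z_\eta$ is of the form $C_j\cap(\R^d\backslash Z_\eta)$ for some $j$ and contains $p_j\in\R^d\backslash X$, so cannot be contained in $X$. Setting $Z:=Z_\eta$ finishes the proof. The main obstacle is securing the global bicollar, which in the merely continuous ($C^0$) setting is not automatic and requires Brown's collar theorem combined with the local flatness supplied by the continuous-boundary definition; the deformation-retract bookkeeping is then routine.
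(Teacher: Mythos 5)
Your argument is correct in outline and reaches the right conclusion, but it takes a genuinely different route from the paper, and its one load-bearing step is an appeal to a deep external theorem. You reduce everything to the existence of a global open bicollar of $\partial K$, which in the merely topological setting is exactly Brown's theorem that a compact locally flat codimension-one submanifold of $\R^d$ is bicollared; the local flatness does follow from the paper's definition of continuous boundary (via invariance of domain, the chart $h$ identifies $\partial U \cap h(B(0,1))$ with $h(\{x_d=0\}\cap B(0,1))$, and one also checks $\partial \overline{U}=\partial U$ and $U=\operatorname{int}\overline{U}$), and one must additionally fix the sign of the collar parameter separately on each of the finitely many components of $\partial K$ so that the positive side lands in $\R^d\backslash K$. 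Granting the bicollar, the rest is sound: $Z_\eta$ is open, contains $K$, and fits inside $Y$ and inside $X$ for small $\eta$; your retraction $\rho$ is continuous (the set $\phi(\partial K\times(0,\eta])$ is closed in $\R^d\backslash K$ because $\phi(\partial K\times\{0\})=\partial K$, so the pasting lemma applies), it maps each component $C$ of $\R^d\backslash K$ onto $C\cap(\R^d\backslash Z_\eta)$, and hence every bounded component of $\R^d\backslash Z_\eta$ is of the form $C_j\cap(\R^d\backslash Z_\eta)$ and contains the point $p_j\in\R^d\backslash X$. (The finiteness of the $C_j$ is not actually needed.) The paper avoids all of this machinery: it works directly with a finite atlas of boundary charts $h_1,\dots,h_k$ and builds the complement of $Z$ by iteratively $\delta$-thickening the union $B_1$ of the unbounded and $\partial X$-touching components of $\R^d\backslash K_{\varepsilon_0}$, showing after finitely many steps that this thickening reaches into every chart; this keeps the proof elementary and self-contained at the cost of length. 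Your approach buys brevity and a cleaner bijection between complementary components, but only if the reader accepts Brown's bicollar theorem as a black box; if the intent were a self-contained proof, that step would be a genuine gap to fill.
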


\begin{proof}
	We set $\varepsilon_0= d(K,\R^d\backslash Y)/2$ if $Y\neq\R^d$ and $\varepsilon_0=1/2$ otherwise. For $\rho>0$ we abbreviate $K_\rho= K + B(0,\rho)$. Then, $K_{\varepsilon_0} \Subset Y$ and $\partial X\subseteq\R^d\backslash \overline{K_{\varepsilon_0}}$.

Before we proceed, we give a short outline of the proof. The idea is to construct $\R^d\backslash Z$ by considering the unbounded connected component of $\R^d\backslash K_{\varepsilon_0}$ together with  its connected components that intersect $\partial X$ and to successively enlarge these components. We will show that only a finite number of enlargement steps are needed to come close enough to the boundary of $K$. Here, "close enough" means that it is possible to take the union of this enlargement with a suitable open superset of $\R^d\backslash K$ without losing the property that bounded connected components intersect $\partial X$.

	Let $R>0$ be such that $K_{\varepsilon_0}\subseteq \overline{B}(0,R)$. Then, the single unbounded connected component  $\R^d\backslash K_{\varepsilon_0}$ contains $\R^d\backslash \overline{B}(0,R)$. Let $\mathscr{C}_0$ be the set consisting of all connected components $C$ of $\R^d\backslash K_{\varepsilon_0}$ that are unbounded or intersect $\partial X$. Set $B_1=\cup_{C\in\mathscr{C}_0}C$.
	
	\textsc{Auxiliary Claim 1:}  \emph{$(\R^d\backslash  \overline{B}(0,R)) \cup \partial X \subseteq B_1$ and $B_1$ is a closed subset of $\R^d$.}
	
	\emph{Proof of auxiliary claim 1}. The first assertion is clear. To show that $B_1$ is closed, let $(x_n)_{n\in\N}$ be a sequence in $B_1$ that converges to $x$. We write $C_\infty$ for the unbounded connectecd component of $\R^d\backslash K_{\varepsilon_0}$. We distinguish two cases. Suppose that infinitely many $x_n$ belong to $C_\infty$. Since $C_\infty$ is closed in $\R^d$, we obtain that $x \in C_\infty \subseteq B_1$.  If only finitely many $x_n$ belong to $C_\infty$, we may assume that for every $n\in\N$ the connected component $C_n$ of $\R^d\backslash K_{\varepsilon_0}$ that contains $x_n$ is bounded. Since $\R^d\backslash  \overline{B}(0,R) \subseteq C_\infty$ we have that $C_n\subseteq  \overline{B}(0,R)$ for all $n\in\N$. By the definition of $\mathscr{C}_0$, there is $z_n\in\partial X\cap C_n$ for all $n\in\N$. We may assume that $(z_n)_{n\in\N}$ converges to some 
	$z\in\partial X\cap  \overline{B}(0,R)\subseteq \left(\R^d\backslash \overline{K_{\varepsilon_0}}\right)$.  Let $C$ be the connected component of $\R^d\backslash \overline{K_{\varepsilon_0}}$ that contains $z$. As $\R^d\backslash \overline{K_{\varepsilon_0}}$ is open, there is $\delta>0$ such that $B(z,\delta)\subseteq C$. Since $(z_n)_{n\in\N}$ converges to $z$, we may assume that $z_n\in B(z,\delta)$ for all $n \in \N$. Hence,  $B(z,\delta)\subseteq C_n$ for each $n\in\N$ and thus  $C_1=C_n$ for all $n\in\N$. Since $C_1$ is closed in $\R^d$, it follows from $x_n\in C_n=C_1, n\in\N$, that $x\in C_1 \subseteq B_1$. 
	
	We set 
	$$\delta(x)=\min\left\{1,\frac{d(x,K)}{2}\right\}, \qquad x \in \R^d \backslash K.$$
	For $n\in\N$ we recursively define $B_{n+1}=\bigcup_{x\in B_n} \overline{B}(x,\delta(x))$.
	
	\textsc{Auxiliary Claim 2:} \emph{For each $n \in \N$ it holds that  $B_n\subseteq \R^d\backslash K_{\varepsilon_0/2^{n-1}}$, every bounded connected component of $B_n$ intersects $\partial X$, and $B_n$ is a closed subset of $\R^d$.}
	
	\emph{Proof of auxiliary claim 2.} For $n=1$ the first two assertions are clear, while the third one has been shown in auxiliary claim 1.  The assertions for general $n$ can be shown by induction, the details are left to the reader. 
	
	\textsc{Auxiliary Claim 3:} 
		\emph{There are $m\in\N$ and finitely many  continuous homeomorphism $h_j:\overline{B}(0,1)\rightarrow h_j(\overline{B}(0,1)) \subseteq \R^d$, $1\leq j\leq k$, such that
	\begin{itemize}
		\item[(a1)] $\forall\,1\leq j\leq k:\,B_m\cap h_j\left(B(0,1)\right) \neq\emptyset$,
		\item[(a2)] $\partial K\subseteq\cup_{j=1}^k h_j\left(B(0,1)\right)\subseteq\bigcup_{j=1}^k h_j\left(\overline{B}(0,1)\right)\subseteq K_{\varepsilon_0}$,
		\item[(a3)] $\forall\,1\leq j\leq k:\,K\cap h_j\left(\overline{B}(0,1)\right) =h_j\left(\{x\in \overline{B}(0,1)\,;\,x_d\leq 0\}\right)$.
	\end{itemize}}
	
	\emph{Proof of auxiliary claim 3.} Fix an arbitrary $\xi_0\in\partial K$. Since $K$ has  continuous boundary, there is a homeomorphism $h:\overline{B}(0,1)\rightarrow h_j(\overline{B}(0,1)) \subseteq \R^d$ such that $h(\xi) = 0$ 
	and $K\cap h\left(\overline{B}(0,1)\right)=h\left(\{x\in \overline{B}(0,1)\, ; \,x_d\leq 0\}\right)$. By possibly shrinking the ball $\overline{B}(0,1)$ and rescaling the function $h$, we may assume that  $h\left(\overline{B}(0,1)\right)\subseteq K_{\varepsilon_0}$. Set $V=h\left(B(0,1)\right)$. We now prove that there is $m\in\N$ with  $B_m\cap V\neq\emptyset$. 
	Note that $V\cap\left(\R^d\backslash K\right) \neq \emptyset$.
	Let $y_0\in V\cap\left(\R^d\backslash K\right)$ be arbitrary. We are going to show that $y_0 \in B_m$ for some $m \in \N$. Since $\R^d\backslash K$ is open, its connected components are pathwise connected. We distinguish two cases. Suppose that $y_0$ belongs to the unbounded connected component of $\R^d\backslash K$. Fix $z_0$ in the unbounded connected component of $B_1\subseteq\R^d\backslash K$. Then there is a continuous curve $\gamma:[0,1]\rightarrow \R^d\backslash K$ with $\gamma(0)=z_0\in B_1$ and $\gamma(1)=y_0$. If $y_0$ does not belong to the unbounded connected component of $\R^d\backslash K$, the hypothesis that $X$ does not contain any bounded connected component of $\R^d\backslash K$ implies that there is a continuous curve $\gamma:[0,1]\rightarrow\R^d\backslash K$ with $\gamma(0)=z_0\in\partial X\subseteq B_1$ and $\gamma(1)=y_0$.
	Hence, in either case, there is a continuous curve $\gamma:[0,1]\rightarrow\R^d\backslash K$ with $\gamma(0)\in B_1$ and $\gamma(1)=y_0$. Set
	$$\delta=\min\left\{1,\frac{d(\gamma([0,1]),K)}{2}\right\}>0.$$
	Since $\gamma$ is uniformly continuous, there is $N \in \N$ such that $|\gamma(r)-\gamma(t)|<\delta$ for all  $r,t\in[0,1]$ with $|r-t|\leq 1/N$. As $\gamma(0)\in B_1$ and $\delta<\delta(\gamma(0))$, it holds that $ \overline{B}(\gamma(0),\delta)\subseteq B_2$. By the choice of $N$, we have that $\gamma(1/N)\in  \overline{B}(\gamma(0),\delta)\subseteq B_2$.  Likewise, as $\delta\leq\delta(\gamma(1/N))$, it holds that $\gamma(2/N)\in  \overline{B}(\gamma(1/N),\delta)\subseteq B_3$. Continuing this process,  we find that $y_0=\gamma(1)\in B_{N+1}$.  Since  $\xi_0\in\partial K$ was chosen arbitrarily, the claim now follows from the compactness of $\partial K$ and the fact that $B_n\subseteq B_{n+1}$ for all $n\in\N$.
	
	We can now prove the assertion of the lemma. To this end, let $m$ and $h_1,\ldots,h_k$ be as in the auxiliary claim 3. Choose $\rho\in (0,1)$ such that
	\begin{equation}
		\label{inclusionepsilon} 
	\bigcup_{j=1}^k h_j\left(\{x\in \overline{B}(0,1)\,;\, x_d\leq \rho\}\right)\subseteq K_{\varepsilon_0/2^{m-1}}.
	\end{equation}
	Set $Z' :=K\cup\bigcup_{j=1}^k h_j\left(\{x\in B(0,1)\,; \, x_d <\rho\}\right)$. Then, $Z'$ is  open and  $K \subseteq Z' \subseteq K_{\varepsilon_0/2^{m-1}} \subseteq Y$. Let $C$ be an arbitrary connected component of $\R^d\backslash Z'$. Then,
	$$\emptyset\neq C\cap\partial Z'\subseteq C\cap\left(\bigcup_{j=1}^k h_j\left(\{x\in \overline{B}(0,1)\,; \, x_d\leq \rho\}\right)\right),$$
This implies that there is $j \in \{1, \ldots, k\}$ such that
	$$\emptyset\neq C\cap h_j\left(\{x\in \overline{B}(0,1)\,; \,x_d= \rho\}\right)\subseteq C\cap h_j\left(\{x\in \overline{B}(0,1)\,;\,x_d\geq \rho\}\right).$$
As the union of two connected sets with non-empty intersection  is again connected, we obtain that $C\cup h_j\left(\{x\in \overline{B}(0,1)\,;\,x_d\geq \rho\}\right)$ is connected. Additionally, by condition (a1) and the inclusion $B_m \subseteq \R^d \backslash K_{\varepsilon_0/2^{m-1}}$, \eqref{inclusionepsilon} implies that there is a connected component $D$ of $B_m$ such that $D \cap h_j\left(\{x\in \overline{B}(0,1)\, ; \,x_d\geq \rho\}\right) \neq \emptyset$. Hence, $C\cup h_j\left(\{x\in \overline{B}(0,1)\, ; \,x_d\geq \rho\}\right)\cup D$ is connected. This implies that each of the  connected components of the set 
	$$ A = \left(\R^d\backslash Z'\right)\cup\bigcup_{j=1}^k h_j\left(\{x\in \overline{B}(0,1)\,;\,x_d\geq \rho\}\right)\cup B_m$$
	 contains a connected component of $B_m$ and therefore is unbounded or intersects $\partial X$ (auxiliary claim 2). Furthermore, this set is closed in $\R^d$ ($B_m$ is closed by the auxiliary claim 2).
    Finally, we define $Z = \R^d \backslash A$. Then, $Z$ is open, $Z \subseteq Z' \subseteq Y$ and each bounded connected component of $\R^d \backslash Z = A$ intersects $\partial X$. We also have that $K \subseteq Z$ because  $K \subseteq Z'$, $K \subseteq K_{\varepsilon_0/2^{m-1}} \subseteq \R^d \backslash B_m$   and the fact that, by (a3), $K \subseteq \R^d \backslash h_j\left(\{x\in \overline{B}(0,1)\,;\,x_d\geq \rho\}\right)$ for all $j = 1, \ldots, k$.	
\end{proof}

\begin{proof}[Proof of Theorem \ref{theo: quantitative Lax-Malgrange}] 
	By Lemma \ref{lem: geometric}, we may assume that $X$ contains  no bounded connected component of $\R^d\backslash Y$. Another application of Lemma \ref{lem: geometric} with $\operatorname{int} L$ in place of $Y$ shows that there is $Y' \Subset Y$ with $K \subseteq Y'$ and $\overline{Y'} \subseteq L$ such that $X$ contains no bounded connected component of $\R^d\backslash \overline{Y'}$. In particular, $Y$ contains no bounded connected component of $\R^d\backslash \overline{Y'}$. The result now follows from Theorem \ref{theo: quantitative Runge for elliptic-1}.
\end{proof}

\begin{proof}[Proof of Theorem \ref{theo: quantitative parabolic}] Let $L \subseteq H$ compact with $K \subseteq \operatorname{int} L$ and $\delta > 0$ with $[t_1-\delta,t_2+\delta]\subseteq J$ be arbitrary. 
	By Lemma \ref{lem: geometric}, we may assume that $G$ contains  no bounded connected component of $\R^n\backslash H$. Another application of Lemma \ref{lem: geometric} shows that there is $H' \Subset H$ with $K \subseteq H'$ and $\overline{H'} \subseteq L$ such that $H$ contains no bounded connected component of $\R^n\backslash \overline{H'}$, whence there is  no $c\in\R$ such that $J \times H$ contains a compact connected component of $\left(\R^{n+1}\backslash(J'\times H')\right)\cap\{(c, x) \,; \, x \in\R^{n}\}$. The result now follows from Corollary \ref{cor: quantitative Runge for parabolic}.
\end{proof}

\section*{Acknowledgements} We would like to thank the anonymous referee for suggesting to mention the results from \cite[Section 10.5]{HoermanderPDO2} and to use them to show that in all our global approximation results, i.e., when $X=\R^d$, the approximating function $h_\varepsilon$ can be chosen to be a linear combination of exponential solutions (cf.\ the introduction).

\end{document}